\let\oldproofname=\proofname
\renewcommand{\proofname}{\rm\bf{\oldproofname}}
\tikzset{>=latex}
  \tikzset{->-/.style={decoration={
  markings,
  mark=at position .5 with {\arrow{>}}},postaction={decorate}}}
    \tikzset{-<-/.style={decoration={
  markings,
  mark=at position .5 with {\arrow{<}}},postaction={decorate}}}
    \tikzset{->/.style={decoration={
  markings,
  mark=at position .8 with {\arrow{>}}},postaction={decorate}}}
\newtheorem{lemma}{Lemma}[section]
\newtheorem{proposition}[lemma]{Proposition}
\newtheorem{theorem}[lemma]{Theorem}
\newtheorem{corollary}[lemma]{Corollary}
\theoremstyle{definition}
\newtheorem{remark}[lemma]{Remark}
\newtheorem{definition}[lemma]{Definition}
\newtheorem{example}[lemma]{Example}
\newcommand{\GL}[1]{{GL_#1}}
\newcommand{\gl}{{\mathfrak{gl}}}
\newcommand{\PGL}{{PGL}}
\newcommand{\Gr}{{Gr}}
\newcommand{\Orth}{{O}}
\newcommand{\Sp}{{Sp}}
\newcommand{\SO}{{SO}}
\newcommand{\PSp}{{PSp}}
\newcommand{\Z}{\mathbb{Z}}
\newcommand{\C}{\mathbb{C}}
\newcommand{\R}{\mathbb{R}}
\newcommand{\Q}{\mathcal{Q}}
\newcommand{\Hom}{{H}}
\newcommand{\Id}{  I}
\newcommand{\wind}{{{wind}}}
\newcommand{\wt}{{{wt}}}
\newcommand{\eps}{{\varepsilon}}
\newcommand{\Rect}{\mathcal{G}}
\newcommand{\mes}{{Meas}}
\newcommand{\g}{{\mathfrak g}}
\newcommand{\F}{{\mathcal F}}
\newcommand{\op}{{ \bar \Gamma}}
\newcommand{\tr}{\mathrm{Tr}}
\newcommand{\grad}{\mathrm{grad}\,}
\newcommand{\lw}{w_{0}}
\newcommand{\I}{\mathcal I}
\newcommand{\word}{\mathbf  i}
\newsavebox\myboxA
\newsavebox\myboxB
\newlength\mylenA
\newcommand*\xoverline[2][0.75]{%
    \sbox{\myboxA}{$\m@th#2$}%
    \setbox\myboxB\null
    \ht\myboxB=\ht\myboxA%
    \dp\myboxB=\dp\myboxA%
    \wd\myboxB=#1\wd\myboxA
    \sbox\myboxB{$\m@th\overline{\copy\myboxB}$}
    \setlength\mylenA{\the\wd\myboxA}
    \addtolength\mylenA{-\the\wd\myboxB}%
    \ifdim\wd\myboxB<\wd\myboxA%
       \rlap{\hskip 0.5\mylenA\usebox\myboxB}{\usebox\myboxA}%
    \else
        \hskip -0.5\mylenA\rlap{\usebox\myboxA}{\hskip 0.5\mylenA\usebox\myboxB}%
    \fi}
\begin{document}

\tikzset{->-/.style={decoration={
  markings,
  mark=at position .5 with {\arrow{>}}},postaction={decorate}}}
  
%
  \tikzset{-<-/.style={decoration={
  markings,
  mark=at position .3 with {\arrow{<}}},postaction={decorate}}}
  
\usetikzlibrary{angles, quotes}



\title{
Planar networks and simple Lie groups beyond type A}

\author{Anton Izosimov\thanks{
Department of Mathematics,
University of Arizona and School of Mathematics \& Statistics, University of Glasgow;
e-mail: {\tt Anton.Izosimov@glasgow.ac.uk}
} }

\date{}

\maketitle

\abstract{

The general linear group $\GL{n}$, along with its adjoint simple group $\PGL_n$, 
can be described by means of weighted planar networks. 
 In this paper, we give a network description 
 for simple Lie groups of types $B$ and~$C$. The corresponding networks are axially symmetric modulo a sequence of cluster mutations along the axis of symmetry. We extend to this setting the result of Gekhtman, Shapiro, and Vainshtein on the Poisson property of Postnikov's boundary measurement map. We also show that $B$ and~$C$ type networks with positive weights parametrize the totally nonnegative part of the respective group. Finally, we construct network parametrizations of double Bruhat cells in symplectic and odd-dimensional orthogonal groups, and identify the corresponding face weights with Fock-Goncharov cluster coordinates.


%

}

\tableofcontents

\section{Introduction}

\subsection{Overview}

Description of matrices by planar graphs, or \emph{networks}, has its roots in the theory of total positivity and  is widely used in contemporary literature in the context of cluster algebras and Poisson geometry~\cite{fomin2000total, gekhtman2010cluster}. 
The modern definition of a matrix corresponding to a weighted network -- the \emph{boundary measurement matrix} -- is due to Postnikov~\cite{Pos}. The entries of that matrix are certain sign-twisted path generating functions associated with the network. Alternatively, the boundary measurement matrix may be defined in the framework of the dimer model on bipartite graphs, cf. \cite{postnikov2009matching, lam2013notes}. 

The boundary measurement map sending a network to the associated matrix has many remarkable features. First, that map is a homomorphism in the sense that the boundary measurement matrix of concatenation of two networks is the product of their boundary measurement matrices. Second, that map is Poisson:  there is a simple log-canonical Poisson bracket on networks whose pushforward by the  boundary measurement map coincides with the standard multiplicative Poisson structure on $GL_n$~\cite{GSV2}. Third, the boundary measurement map is \emph{positive} in the sense that it takes networks with positive weights to totally nonnegative matrices. The latter two features - the Poisson property and positivity - can be both understood in the framework of cluster algebras. The network structure is, in a certain sense, dual to the cluster structure on $GL_n$. All of this also applies, with some modifications, to the corresponding simple group $\PGL_n$.

Simple Lie groups of types other than $A$ also carry a cluster structure \cite{berenstein2005cluster, fock2006cluster}. However, it has not previously been known whether such groups can be described in the framework of planar networks. The goal of the present paper is to develop network models for simple Lie groups of types $B$ and $C$. To that end we introduce a class of networks that we call \emph{move-symmetric}. Such networks have a genuine axial symmetry in type $C$ and ``twisted'' symmetry in type~$B$. The twist is given by a sequence of network \emph{moves}, or \emph{cluster mutations}. 
We show that move-symmetric networks encode symplectic and odd-dimensional orthogonal groups in the same way as usual networks describe~$\GL{n}$. Specifically, the boundary measurement matrix of a move-symmetric network belongs to  $\Orth_{2n+1}$ or $\Sp_{2n}$, and is, moreover, nonnegative in the sense of Lusztig \cite{Lus} when the weights are positive real numbers. 
Furthermore, there is a  log-canonical Poisson bracket on move-symmetric networks  whose pushforward by the boundary measurement map is the standard Poisson structure on the corresponding group, see Theorem~\ref{thm1}. 



As an illustration of our construction, we describe a class of 
move-symmetric networks that parametrize \emph{double Bruhat cells} in  $\Orth_{2n+1}$ and $\Sp_{2n}$, see Proposition \ref{dbc}. 
Such networks are associated with \emph{double reduced words} in the corresponding Weyl group and can be thought of as graphical counterparts of parametrization of double Bruhat cells by means of \emph{factorization coordinates}  \cite{fomin1999double}. We also show that (appropriately symmetrized) \emph{face weights} of such networks coincide with Fock-Goncharov cluster coordinates~\cite{fock2006cluster} on double Bruhat cells in the corresponding centerless groups $\SO_{2n+1}$, $\PSp_{2n}$, see Proposition \ref{prop:fg}.

Obtaining network descriptions for simple Lie groups of type $D$ as well as exceptional types remains an open problem. An extension of our results to Grassmannians and  loop groups, as well as applications to integrable systems, cf. \cite{GSV2, GSV3, GSTV}, will be discussed in a separate publication.

Some ingredients of our construction in type $C$ case have already appeared in the literature. Namely, one can deduce that the boundary measurement matrix of an axially symmetric network is symplectic from the results of~\cite{karpman2018total}, while a network parametrization of double Bruhat cells in the symplectic group is given in~\cite{lin}. The latter work also treats types $B$ and $D$, but the corresponding networks lack planarity and hence good Poisson and positivity properties. Other work relating simple Lie groups and their homogeneous spaces to network-like objects includes \cite{chepuri2021electrical, bychkov2023electrical, lam2015electrical} where the authors study symplectic groups and Lagrangian Grassmannians in the framework of electrical networks, and \cite{galashin2020ising} on total positivity in orthogonal Grassmannians in the setting of the Ising model. Finally, we mention the work \cite{le2019cluster} on cluster structures on higher Teichm\"uller spaces for classical groups in types other than $A$. In types $B$ and $C$, the corresponding quivers have properties similar to that of our move-symmetric networks.   




\medskip
{\bf Acknowledgments.} The author is grateful to Jonathan Boretsky, Christopher Eur, Michael Gekhtman, George Lusztig, Alexander Shapiro, and the anonymous referees for fruitful discussions and useful remarks. A part
of this work was done during the author’s visit to Max Planck Institute for Mathematics
in Bonn. The author would like to thank the Institute’s faculty and staff for their support
and stimulating atmosphere. This work was partially supported by NSF grant DMS-2008021 and the Simons Foundation
through its Travel Support for Mathematicians program.



\medskip
\subsection{Networks and plabic graphs in type A}
\paragraph{Networks and matrices.} In this and next section we follow \cite{Pos}, sometimes adapting its terminology to better suit our purposes. 
In what follows, an $n\times n$ \emph{network} is an edge-weighted finite connected directed graph embedded in a rectangle, with $n$ sources on the left  side of the rectangle, $n$ sinks on the right side, and all other vertices in the interior of the rectangle. We label sources and sinks by integers $1, \dots, n$, from bottom up.
The edge weights are non-zero complex numbers  (more generally one can allow weights in any field). 

The \emph{boundary measurement matrix} of an $n \times n$ network is the $n \times n$ matrix given by
\begin{equation}\label{eq:bmm}
A_{ij}:= \sum_{\gamma: i \to j} (-1)^{\wind(\gamma)}\wt(\gamma).
\end{equation}
Here the sum is taken over all directed paths $\gamma$ from source $i$ to sink $j$. The \emph{weight $\wt(\gamma)$} of a path $\gamma$ is the product of weights of all edges belonging to $\gamma$. The number $\wind(\gamma) \in \Z$ is called the \emph{winding index} and can be thought of as the number of $360^\circ$ turns made by $\gamma$, see \cite[Section 4]{Pos}. In the presence of directed cycles the sum defining $A_{ij}$ is an infinite geometric-like series which is understood as evaluation of the rational function it formally adds up to, see Figure~\ref{fig:network}.
 \begin{figure}[t]
 \centering
\begin{tikzpicture}[scale = 0.8]
\node [draw,circle,color=black, fill=black,inner sep=0pt,minimum size=1pt] (A) at (0,1) {};
\node [draw,circle,color=black, fill=black,inner sep=0pt,minimum size=1pt](B) at (1,1) {};
\node [draw,circle,color=black, fill=black,inner sep=0pt,minimum size=1pt] (C) at (0,2) {};
\node [draw,circle,color=black, fill=black,inner sep=0pt,minimum size=1pt] (D) at (1,2) {};
\draw [->] (-1, 1) -- (A)   node[midway, below] {\footnotesize $ f$};
\draw [->-] (B) -- (A) node[midway, below] {\footnotesize $ g$};
\draw [->-] (A) -- (C) node[midway, left] {\footnotesize $ d$};
\draw [->-] (C) -- (D) node[midway, above] {\footnotesize $ b$};
\draw [->] (-1, 2) -- (C) node[midway, above] {\footnotesize $ a$};
\draw [->-] (B) -- (2,1) node[midway, below] {\footnotesize $ h$};
\draw [->-] (D) -- (2,2) node[midway, above] {\footnotesize $ c$};
\draw [->-] (D) -- (B) node[midway, left] {\footnotesize $ e$};

\draw [dashed] (-1,2.75) -- (2,2.75);
\draw [dashed] (2,0.25) -- (2,2.75) node[pos = 0.3, right] {\footnotesize$1$} node[pos = 0.7, right] {\footnotesize$2$};;
\draw [ dashed] (-1,0.25) -- (2,0.25) ;
\draw [ dashed]  (-1,0.25) -- (-1,2.75)   node[pos = 0.3, left] {\footnotesize$1$} node[pos = 0.7, left] {\footnotesize$2$};;

\node () at (9, 1.5) { $A_{11} = fdbeh - f(dbeg)dbeh + f(dbeg)^2dbeh \,- \dots = \displaystyle\frac{fdbeh}{1 + dbeg}.$};
\end{tikzpicture}
\caption{A  network and the (1,1) entry of its boundary measurement matrix.}\label{fig:network}
\end{figure}
 For real positive weights the entries of the boundary measurement matrix are always finite, and the matrix itself is \emph{totally nonnegative}, i.e., has all minors $\geq 0$.

There is a natural \emph{concatenation} operation on networks defined by gluing sinks of one network to the corresponding sources of the other, see \cite[Section 3.1]{GSV2} and Figure \ref{fig:conc}.
 \begin{figure}
 \centering
\begin{tikzpicture}[scale = 0.9]
\node () at (0,0) {
\begin{tikzpicture}[scale = 0.8]
\node [draw,circle,color=black, fill=black,inner sep=0pt,minimum size=1pt] (A) at (0,1) {};
\node [draw,circle,color=black, fill=black,inner sep=0pt,minimum size=1pt] (C) at (0,2) {};
\node [inner sep=0pt]  (B) at (1,1) {};
\node [inner sep=0pt]  (D) at (1,2) {};
\draw [->] (-1, 1) -- (A)   node[midway, below] {\footnotesize $ d$};
\draw [-<-] (B) -- (A) node[midway, below] {\footnotesize $ e$};
\draw [->-] (A) -- (C) node[midway, left] {\footnotesize $ c$};
\draw [->-] (C) -- (D) node[midway, above] {\footnotesize $ b$};
\draw [->] (-1, 2) -- (C) node[midway, above] {\footnotesize $ a$};

\draw [dashed] (-1,2.75) -- (1,2.75);
\draw [dashed] (1,0.25) -- (1,2.75) node[pos = 0.3, right] {\footnotesize$1$} node[pos = 0.7, right] {\footnotesize$2$};;
\draw [ dashed] (-1,0.25) -- (1,0.25) ;
\draw [ dashed]  (-1,0.25) -- (-1,2.75)   node[pos = 0.3, left] {\footnotesize$1$} node[pos = 0.7, left] {\footnotesize$2$};;

\end{tikzpicture}};
\node () at (2,0) {$\times$};
\node () at (4,0) {
\begin{tikzpicture}[scale = 0.8]
\node [inner sep=0pt]  (B) at (1,1) {};
\node [draw,circle,color=black, fill=black,inner sep=0pt,minimum size=1pt] (A) at (0,1) {};
\node [draw,circle,color=black, fill=black,inner sep=0pt,minimum size=1pt] (C) at (0,2) {};
\node  [inner sep=0pt]  (D) at (1,2) {};
\draw [->] (-1, 1) -- (A)   node[midway, below] {\footnotesize $ i$};
\draw [-<-] (B) -- (A) node[midway, below] {\footnotesize $ j$};
\draw [-<-] (A) -- (C) node[midway, right] {\footnotesize $ h$};
\draw [->-] (C) -- (D) node[midway, above] {\footnotesize $ g$};
\draw [->] (-1, 2) -- (C) node[midway, above] {\footnotesize $ f$};

\draw [dashed] (-1,2.75) -- (1,2.75);
\draw [dashed] (1,0.25) -- (1,2.75) node[pos = 0.3, right] {\footnotesize$1$} node[pos = 0.7, right] {\footnotesize$2$};;
\draw [ dashed] (-1,0.25) -- (1,0.25) ;
\draw [ dashed]  (-1,0.25) -- (-1,2.75)   node[pos = 0.3, left] {\footnotesize$1$} node[pos = 0.7, left] {\footnotesize$2$};;

\end{tikzpicture}};
\node () at (6,0) {$=$};
\node () at (8.5,0) {
\begin{tikzpicture}[scale = 0.8]
\node [draw,circle,color=black, fill=black,inner sep=0pt,minimum size=1pt] (A) at (0,1) {};
\node [draw,circle,color=black, fill=black,inner sep=0pt,minimum size=1pt] (B) at (1,1) {};
\node [draw,circle,color=black, fill=black,inner sep=0pt,minimum size=1pt] (C) at (0,2) {};
\node [draw,circle,color=black, fill=black,inner sep=0pt,minimum size=1pt] (D) at (1,2) {};
\draw [->] (-1, 1) -- (A)   node[midway, below] {\footnotesize $ d$};
\draw [-<-] (B) -- (A) node[midway, below] {\footnotesize $ ei$};
\draw [->-] (A) -- (C) node[midway, left] {\footnotesize $ c$};
\draw [->-] (C) -- (D) node[midway, above] {\footnotesize $ bf$};
\draw [->] (-1, 2) -- (C) node[midway, above] {\footnotesize $ a$};
\draw [->-] (B) -- (2,1) node[midway, below] {\footnotesize $ j$};
\draw [->-] (D) -- (2,2) node[midway, above] {\footnotesize $ g$};
\draw [->-] (D) -- (B) node[midway, right] {\footnotesize $ h$};

\draw [dashed] (-1,2.75) -- (2,2.75);
\draw [dashed] (2,0.25) -- (2,2.75) node[pos = 0.3, right] {\footnotesize$1$} node[pos = 0.7, right] {\footnotesize$2$};;
\draw [ dashed] (-1,0.25) -- (2,0.25) ;
\draw [ dashed]  (-1,0.25) -- (-1,2.75)   node[pos = 0.3, left] {\footnotesize$1$} node[pos = 0.7, left] {\footnotesize$2$};;

\end{tikzpicture}

};
\end{tikzpicture}
\caption{Concatenation of networks.}\label{fig:conc}
\end{figure}
 The boundary measurement matrix of concatenation of two networks is the product of their boundary measurement matrices.

A  \emph{gauge transformation} of a network is multiplication of weights of all edges adjacent to a given internal vertex by either $\lambda$ or $\lambda^{-1}$ (where $\lambda \in \C^*$), depending on whether the edge is pointing towards or away from the vertex. This operation does not change the boundary measurement matrix, allowing one to express the latter in terms of coordinates on the quotient of the space of edge weights by gauge transformations. As such coordinates, one can take the \emph{face weights}. A \emph{face} of a network is a connected component of its complement in the rectangle. The boundary of every face $F$ consists of a sequence of edges $e_1, \dots, e_k$ and, possibly, a piece of the boundary of the rectangle. Define the weight of $F$ as the product $\wt(e_1)^{\eps_1} \cdots \wt(e_k)^{\eps_k}$ where $\eps_i $ is $1$ if the orientation of $e_i$ agrees with the counter-clockwise orientation of $F$, and  $-1$ otherwise; see Figure \ref{fig:face}.
 Face weights, subject to the condition that their product over all faces is equal to $1$, determine the edge weights uniquely up to gauge transformations.  The entries of the boundary measurement matrix are rational functions of face weights. Furthermore, for real positive face weights those entries are always finite, and the matrix itself is totally nonnegative.

  \begin{figure}
 \centering
\begin{tikzpicture}[scale = 1.1]
\node [draw,circle,color=black, fill=black,inner sep=0pt,minimum size=1pt] (A) at (0,1) {};
\node [draw,circle,color=black, fill=black,inner sep=0pt,minimum size=1pt] (B) at (1,1) {};
\node [draw,circle,color=black, fill=black,inner sep=0pt,minimum size=1pt] (C) at (0,2) {};
\node [draw,circle,color=black, fill=black,inner sep=0pt,minimum size=1pt] (D) at (1,2) {};
\node  () at (0.5,2.5){ \footnotesize $y_1$};
\node () at (-0.6,1.5){ \footnotesize $y_2$};
\node () at (0.5,1.5) {\footnotesize $y_3$};
\node () at (1.6,1.5) {\footnotesize $y_4$};
\node () at (0.5,0.5){ \footnotesize $y_5$};
\draw [->] (-1, 1) -- (A)   node[midway, below, opacity = 0.3] {\footnotesize $ f$};
\draw [->-] (B) -- (A) node[midway, below, opacity = 0.3] {\footnotesize $ g$};
\draw [->-] (A) -- (C) node[midway, left, opacity = 0.3] {\footnotesize $ d$};
\draw [->-] (C) -- (D) node[midway, above, opacity = 0.3] {\footnotesize $ b$};
\draw [->] (-1, 2) -- (C) node[midway, above, opacity = 0.3] {\footnotesize $ a$};
\draw [->-] (B) -- (2,1) node[midway, below, opacity = 0.3] {\footnotesize $ h$};
\draw [->-] (D) -- (2,2) node[midway, above, opacity = 0.3] {\footnotesize $ c$};
\draw [->-] (D) -- (B) node[midway, right, opacity = 0.3] {\footnotesize $ e$};

\draw [dashed] (-1,2.75) -- (2,2.75);
\draw [dashed] (2,0.25) -- (2,2.75); 
\draw [ dashed] (-1,0.25) -- (2,0.25) ;
\draw [ dashed]  (-1,0.25) -- (-1,2.75) ;

\node () at (7, 1.5) { $\begin{gathered}y_1 = abc, \quad y_2 = fda^{-1}, \quad y_3 = (dbeg)^{-1},\\ y_4 = c^{-1}eh,\quad y_5 = f^{-1}gh^{-1}. \\   y_1y_2y_3y_4y_5 = 1. 
\end{gathered}$};
\end{tikzpicture}
\caption{Definition of face weights.}\label{fig:face}
\end{figure}

\paragraph{Perfect networks and plabic graphs.} 
A network is called \emph{perfect} if all its sources and sinks are univalent, while any interior vertex has either exactly one incoming edge, in which case the vertex is called \emph{white}, or exactly one outgoing edge, in which case the vertex is \emph{black}. For instance, the network in Figure \ref{fig:network} is perfect, with two leftmost interior vertices being black and two rightmost interior vertices being white.  


 An \emph{$n \times n$ plabic graph} is a finite connected undirected graph embedded in a rectangle, with $n$ univalent vertices on each of the vertical sides of the rectangle. 
All other vertices are in the interior of the rectangle and colored black or white. 
A \emph{perfect orientation} of a plabic graph is an orientation which makes it a perfect network. 
Plabic graphs admitting a perfect orientation are called \emph{perfectly orientable}. 
A plabic graph is \emph{(face-)weighted} if each of its faces is marked with a non-zero complex number, with the product of all numbers being $1$. We denote the space of all possible face weightings on a plabic graph $\Gamma$ by $\F(\Gamma) $. 
A face-weighted plabic graph is thus a pair $(\Gamma, \mathcal Y)$, where $\mathcal Y \in \F(\Gamma)$. Given a perfectly orientable face-weighted plabic graph, one associates to it a boundary measurement matrix by choosing a perfect orientation and calculating the matrix for the resulting face-weighted network. By theorem \cite[Theorem 10.1]{Pos}, that matrix is independent on the choice of a perfect orientation. A perfectly orientable plabic graph $\Gamma$ is \emph{non-degenerate} if the determinant of its boundary measurement matrix is not identically equal to $0$ as a function of face weights. 

Let $\Gamma$ be an $n \times n$ non-degenerate plabic graph, and let
$
\mes \colon \F(\Gamma) \dashrightarrow GL_n,
$
be the \emph{boundary measurement map}: by definition, for any $\mathcal Y \in \F(\Gamma)$, the matrix  $\mes( \mathcal Y)$ is the boundary measurement matrix of the weighted graph  $(\Gamma, \mathcal Y)$ (here and below dashed arrows are used for rational maps). 
Set $\Rect_n := \bigsqcup_\Gamma \F(\Gamma)$, where the union is taken over all isotopy classes of $n \times n$ non-degenerate plabic graphs. This space can be thought of as the space of {face-weighted} $n \times n$ non-degenerate plabic graphs, up to isotopy. It is a monoid under concatenation  (the definition of concatenation of face-weighted graphs is illustrated in Figure~\ref{fig:conc2}). The boundary measurement map extends to a map $
\mes \colon \Rect_n \dashrightarrow \GL{n},
$
which is a rational homomorphism of monoids.

 \begin{figure}[t]
 \centering
\begin{tikzpicture}[scale = 0.9]
\node () at (0,0) {
\begin{tikzpicture}[scale = 0.8]
\node [draw,circle,color=black, fill=white,inner sep=0pt,minimum size=3pt] (A) at (0,1) {};
\node [inner sep=0pt]  (B) at (1,1) {};
\node [draw,circle,color=black, fill=black,inner sep=0pt,minimum size=3pt] (C) at (0,2) {};
\node [inner sep=0pt]  (D) at (1,2) {};
\draw (-1, 1) -- (A)   node[midway, below] {};
\draw (B) -- (A) node[midway, below] {};
\draw (A) -- (C) node[midway, left] {};
\draw (C) -- (D) node[midway, above] {};
\draw (-1, 2) -- (C) node[midway, above] {};
\node () at (0,2.4){ \footnotesize $y_1$};
\node () at (-0.5,1.5){ \footnotesize $y_2$};
\node () at (0.5,1.5) {\footnotesize $y_3$};
\node () at (0,0.6){ \footnotesize $y_4$};

\draw [dashed] (-1,2.75) -- (1,2.75);
\draw [dashed] (1,0.25) -- (1,2.75) node[pos = 0.3, right] {\footnotesize$1$} node[pos = 0.7, right] {\footnotesize$2$};;
\draw [ dashed] (-1,0.25) -- (1,0.25) ;
\draw [ dashed]  (-1,0.25) -- (-1,2.75)   node[pos = 0.3, left] {\footnotesize$1$} node[pos = 0.7, left] {\footnotesize$2$};;
\end{tikzpicture}};
\node () at (2,0) {$\times$};
\node () at (4,0) {
\begin{tikzpicture}[scale = 0.8]
\node [draw,circle,color=black, fill=black,inner sep=0pt,minimum size=3pt] (A) at (0,1) {};
\node [inner sep=0pt]  (B) at (1,1) {};
\node [draw,circle,color=black, fill=white,inner sep=0pt,minimum size=3pt] (C) at (0,2) {};
\node  [inner sep=0pt]  (D) at (1,2) {};
\draw  (-1, 1) -- (A)   node[midway, below] {};
\draw  (B) -- (A) node[midway, below] {};
\draw  (A) -- (C) node[midway, left] {};
\draw  (C) -- (D) node[midway, above] {};
\draw  (-1, 2) -- (C) node[midway, above] {};
\node () at (0,2.4){ \footnotesize $y'_1$};
\node () at (-0.5,1.5){ \footnotesize $y'_2$};
\node () at (0.5,1.5) {\footnotesize $y'_3$};
\node () at (0,0.6){ \footnotesize $y'_4$};

\draw [dashed] (-1,2.75) -- (1,2.75);
\draw [dashed] (1,0.25) -- (1,2.75) node[pos = 0.3, right] {\footnotesize$1$} node[pos = 0.7, right] {\footnotesize$2$};;
\draw [ dashed] (-1,0.25) -- (1,0.25) ;
\draw [ dashed]  (-1,0.25) -- (-1,2.75)   node[pos = 0.3, left] {\footnotesize$1$} node[pos = 0.7, left] {\footnotesize$2$};;

\end{tikzpicture}};
\node () at (6,0) {$=$};
\node () at (9,0) {
\begin{tikzpicture}[scale = 0.8]
\node [draw,circle,color=black, fill=white,inner sep=0pt,minimum size=3pt] (A) at (0,1) {};
\node [draw,circle,color=black, fill=black,inner sep=0pt,minimum size=3pt] (B) at (2,1) {};
\node [draw,circle,color=black, fill=black,inner sep=0pt,minimum size=3pt] (C) at (0,2) {};
\node [draw,circle,color=black, fill=white,inner sep=0pt,minimum size=3pt] (D) at (2,2) {};
\draw  (-1, 1) -- (A)   node[midway, below] {};
\draw  (B) -- (A) node[midway, below] {};
\draw  (A) -- (C) node[midway, left] {};
\draw (C) -- (D) node[midway, above] {};
\draw  (-1, 2) -- (C) node[midway, above] {};
\draw  (B) -- (3,1) node[midway, below] {};
\draw  (D) -- (3,2) node[midway, above] {};
\draw  (D) -- (B) node[midway, left] {};
\node () at (1,2.4){ \footnotesize $y_1y_1'$};
\node () at (-0.5,1.5){ \footnotesize $y_2$};
\node () at (1,1.5) {\footnotesize $y_3y_2'$};
\node () at (2.5,1.5) {\footnotesize $y_3'$};
\node () at (1,0.6){ \footnotesize $y_4y_4'$};

\draw [dashed] (-1,2.75) -- (3,2.75);
\draw [dashed] (3,0.25) -- (3,2.75) node[pos = 0.3, right] {\footnotesize$1$} node[pos = 0.7, right] {\footnotesize$2$};;
\draw [ dashed] (-1,0.25) -- (3,0.25) ;
\draw [ dashed]  (-1,0.25) -- (-1,2.75)   node[pos = 0.3, left] {\footnotesize$1$} node[pos = 0.7, left] {\footnotesize$2$};;
\end{tikzpicture}

};
\end{tikzpicture}
\caption{Concatenation of face-weighted plabic graphs.}\label{fig:conc2}
\end{figure}

\paragraph{Poisson property of the boundary measurement map.} There is a natural Poisson structure on the face weight space of a plabic graph which is well-behaved under concatenation and the boundary measurement map \cite{GSV2}. This structure is conveniently described using the notion of the \emph{dual quiver}. For the purposes of the present paper, a quiver is a directed graph, with some edges designated as \emph{half-edges}. The quiver $\Q$ dual to a plabic graph $\Gamma$ is defined as follows. Vertices of $\Q$ correspond to faces of $\Gamma$.
Edges of $\Q$ correspond to edges of $\Gamma$ which connect either two internal vertices of different colors,
or an internal vertex with a boundary vertex. 
An edge $e^*$ in $\Q$ corresponding to $e$ in $\Gamma$ is directed in such a way that the white endpoint of $e$ (if it exists) lies to the left of $e^*$ and
the black endpoint of $e$ (if it exists) lies to the right of $e$.
Further, $e^*$ is a whole edge if both endpoints of $e$ are internal vertices, and a half-edge if one of the endpoints of $e$ is a boundary vertex, see Figure~\ref{fig:dual}.

  \begin{figure}
 \centering
\begin{tikzpicture}[scale = 1]
\node [draw,circle,color=black, fill=black,inner sep=0pt,minimum size=3pt, opacity = 0.3] (A) at (0,1) {};
\node [draw,circle,color=black, fill=white,inner sep=0pt,minimum size=3pt, opacity = 0.3] (B) at (1,1) {};
\node [draw,circle,color=black, fill=white,inner sep=0pt,minimum size=3pt, opacity = 0.3] (C) at (0,2) {};
\node [draw,circle,color=black, fill=black,inner sep=0pt,minimum size=3pt, opacity = 0.3] (D) at (1,2) {};
\node [draw,circle,color=black, fill=black,inner sep=0pt,minimum size=3pt, label =  {[label distance=-2]  above: \footnotesize $y_1$}] (Y1) at (0.5,2.5){};
\node [draw,circle,color=black, fill=black,inner sep=0pt,minimum size=3pt, label =  {[label distance=-2]  left: \footnotesize $y_2$}] (Y2) at (-0.5,1.5){ };
\node [draw,circle,color=black, fill=black,inner sep=0pt,minimum size=3pt, label =  {[label distance=-2]  45: \footnotesize $y_3$}] (Y3) at(0.5,1.5) {};
\node [draw,circle,color=black, fill=black,inner sep=0pt,minimum size=3pt, label = {[label distance=-2]  right: \footnotesize $y_4$}] (Y4) at (1.5,1.5) {};
\node [draw,circle,color=black, fill=black,inner sep=0pt,minimum size=3pt, label =  {[label distance=-4]  below: \footnotesize $y_5$}] (Y5) at(0.5,0.5){ };
\draw [thick, ->] (Y2) -- (Y3);
\draw [thick,->] (Y4) -- (Y3);
\draw [thick,->] (Y3) -- (Y1);
\draw [thick,->] (Y3) -- (Y5);
\draw [thick,dotted,->] (Y1) to [bend right] (Y2);
\draw [thick,dotted, ->] (Y5) to [bend left]  (Y2);
\draw [thick,dotted, ->] (Y1) to [bend left]  (Y4);
\draw [thick,dotted, ->] (Y5)   to [bend right](Y4);
\draw[ opacity = 0.3]  (-1, 1) -- (A)   node[midway, below] {};
\draw[ opacity = 0.3]  (B) -- (A) node[midway, below] {};
\draw [ opacity = 0.3] (A) -- (C) node[midway, left] {};
\draw [ opacity = 0.3](C) -- (D) node[midway, above] {};
\draw[ opacity = 0.3]  (-1, 2) -- (C) node[midway, above] {};
\draw [ opacity = 0.3] (B) -- (2,1) node[midway, below] {};
\draw [ opacity = 0.3] (D) -- (2,2) node[midway, above] {};
\draw [ opacity = 0.3] (D) -- (B) node[midway, left] {};

\draw [dashed, , opacity = 0.3] (-1,3) -- (2,3);
\draw [dashed, , opacity = 0.3] (2,3) -- (2,0) ;
\draw [ dashed, , opacity = 0.3] (-1,0) -- (2,0) ;
\draw [ dashed, , opacity = 0.3] (-1,0) -- (-1,3) ;

\end{tikzpicture}
\caption{A plabic graph and its dual quiver. Dotted arrows are half-edges.}\label{fig:dual}
    
\end{figure}

Consider the face weight space $\F(\Gamma)$ of a plabic graph $\Gamma$. As coordinates on that space one can take the face weights $y_1, \dots, y_f$, satisfying the relation $y_1 \cdots y_f = 1$. In those coordinates, the Poisson structure is the \emph{log-canonical bracket} defined by the dual quiver $\Q$:
$
\{y_i, y_j\} = q_{ij}y_iy_j.
$
Here $q_{ij}$ is the skew-symmetric adjacency matrix of $\Q$, i.e. $q_{ij} = \#\{ i \to j \} - \#\{ j \to i \}$ where $\#\{ i \to j\}$ stands for the number of edges of $\Q$ oriented from $i$ to $j$, counting every half-edge as $1/2$. The so-defined bracket descends to the subvariety $y_1  \cdots  y_f = 1$ and hence defines a Poisson structure on $\F(\Gamma)$. Endowed with such a bracket, the space $\Rect_n = \bigsqcup_\Gamma \F(\Gamma)$ of face-weighted plabic graphs is a \emph{Poisson monoid}, meaning that the concatenation map $\Rect_n \times \Rect_n \to \Rect_n$ is Poisson. 

\begin{example}\label{ex:qpb}
Consider the plabic graph $\Gamma$ in Figure \ref{fig:dual}. The coordinate ring $\C[\F(\Gamma)]$ of its face weight space $\F(\Gamma)$ is the quotient of the Laurent polynomial ring $\C[y_1^{\pm 1}, \dots, y_5^{\pm 1}]$ in the face weights $y_i$ by the ideal  $ \langle y_1y_2y_3y_4 y_5 -  1\rangle$. A Poisson bracket on $\C[y_1^{\pm 1}, \dots, y_5^{\pm 1}]$ can be read off from the dual quiver. On generators, it is given by
\begin{gather*}
\{y_1, y_2\} = \frac{1}{2}y_1y_2, \quad \{y_1, y_3\} = -y_1y_3, \quad \{y_1, y_4\} = \frac{1}{2}y_1y_4, \quad \{y_1, y_5\} = 0, \\
\{y_2, y_3\} = y_2y_3, \quad \{y_2,y_4\} = 0, \quad  \{y_2, y_5\} = -\frac{1}{2}y_2y_5,\\
\{y_3,y_4\} = -y_3y_4, \quad \{y_3,y_5\} = y_3y_5, \\
\{y_4,y_5\} =  -\frac{1}{2}y_4y_5.
\end{gather*}
(The brackets of the form $\{y_i, y_j\}$ for $i \geq j$, as well as brackets involving  $y_i^{-1}$, can be computed from the ones given using the standard properties of the Poisson bracket.) The function $y_1y_2y_3y_4 y_5 $ is in the center of this Poisson algebra. For example,
\begin{gather*}
\{y_1, y_1y_2y_3y_4 y_5 \}  =  \{y_1, y_2\} y_3y_4y_5 + y_1 \{y_1, y_3\} y_4y_5 + y_1y_2 \{y_1, y_4\} y_5  +  y_1y_2y_3 \{y_1, y_5\} \\ =  (\tfrac{1}{2} - 1 + \tfrac{1}{2}) \,y_1y_2y_3y_4 y_5 = 0.
\end{gather*}
As a result, $ \langle y_1y_2y_3y_4 y_5  - 1\rangle$ is a Poisson ideal. So, the quotient $\C[\F(\Gamma)] = \C[y_1^{\pm 1}, \dots, y_5^{\pm 1}] /  \langle y_1y_2y_3y_4 y_5  - 1\rangle$ inherits a Poisson algebra structure, turning $\F(\Gamma)$ into a Poisson variety. 
 \end{example}


Now, recall that any reductive Lie group $G$ carries a \emph{standard} multiplicative Poisson structure (where multiplicativity means that the multiplication map $G \times G \to G$ is Poisson, so that $G$ endowed with such a structure is a \emph{Poisson-Lie group}). It is determined by a choice of two complementary Borel subalgebras in the Lie algebra of $G$ and is, in a certain sense, the simplest of all multiplicative Poisson structures on $G$ \cite{belavin1982solutions}.

It is proved in \cite{GSV2} that the boundary measurement map $\mes \colon \Rect_n \dashrightarrow \GL{n}$ takes the log-canonical Poisson structure on $ \Rect_n$ to the standard Poisson structure on $\GL{n}$ (where as upper and lower Borel subalgebras one takes upper- and lower-triangular matrices respectively). 


\paragraph{From $GL$ to $PGL$.}

 \begin{figure}[t]
 \centering
\begin{tikzpicture}[scale = 0.9]
\node () at (0.5,0.25){ \footnotesize $\lambda^{-1}$};
\node () at (0.5,2.25){ \footnotesize $\lambda$};
\node () at (0.5,0.75){ \footnotesize $1$};
\node () at (0.5,1.25){ \footnotesize $1$};
\node () at (0.5,1.75){ \footnotesize $1$};

\draw [dashed] (0,2.5) -- (1,2.5);
\draw [dashed] (1,0) -- (1,2.5) ;
\draw [ dashed] (0,0) -- (1,0) ;
\draw [ dashed]  (0,0) -- (0,2.5);
\draw (0,0.5) -- (1,0.5);
\draw (0,1) -- (1,1);
\draw (0,2) -- (1,2);
\draw (0,1.5) -- (1,1.5);
\end{tikzpicture}
\caption{The center of the monoid of plabic graphs.}\label{fig:center}
\end{figure}

Graphs shown in Figure \ref{fig:center} form the center of the monoid $\Rect_n$ of plabic graphs. Taking the quotient of $\Rect_n$ by its center one gets the boundary measurement map valued in $\PGL_{n}$. Forming such a quotient is equivalent to considering face weightings with weights assigned to all faces except the upper and lowermost ones (without imposing any conditions on the product of weights). We call  such weightings \emph{projective}. 
The results of \cite{GSV2} imply that the pushforward of the  the log-canonical Poisson structure on $ \Rect_n / Z(\Rect_n)$ (defined for each graph $\Gamma$ by deleting the uppermost and lowermost vertices of the dual quiver of $\Gamma$) by the boundary measurement map $\mes \colon \Rect_n / Z(\Rect_n) \dashrightarrow \PGL_{n}$ is the standard Poisson structure on $\PGL_{n}$.

\subsection{Plabic graphs in types $B$ and $C$}

  \begin{figure}
 \centering
\begin{tikzpicture}[scale = 0.8]
\node [draw,circle,color=black, fill=black,inner sep=0pt,minimum size=3pt, opacity = 1] (A) at (0,1) {};
\node [draw,circle,color=black, fill=white,inner sep=0pt,minimum size=3pt, opacity = 1] (B) at (1,1) {};
\node [draw,circle,color=black, fill=white,inner sep=0pt,minimum size=3pt, opacity = 1] (C) at (0,2) {};
\node [draw,circle,color=black, fill=black,inner sep=0pt,minimum size=3pt, opacity = 1] (D) at (1,2) {};
\node  (Y1) at (0.5,2.4){\footnotesize$ y_1$};
\node(Y2) at (-0.5,1.5){ \footnotesize$ y_2$ };
\node  (Y3) at(0.5,1.5) {\footnotesize$ y_3$};
\node  (Y4) at (1.5,1.5) {\footnotesize$ y_4$};
\node (Y5) at(0.5,0.6){ \footnotesize$ y_1$};
\draw[ opacity = 1]  (-1, 1) -- (A)   node[midway, below] {};
\draw[ opacity = 1]  (B) -- (A);
\draw [ opacity = 1] (A) -- (C) node[midway, left] {};
\draw [ opacity = 1](C) -- (D);
\draw[ opacity = 1]  (C) -- +(-1,0) node[midway, above] {};
\draw [ opacity = 1] (B) -- +(1,0) node[midway, below] {};
\draw [ opacity = 1] (D) -- (2,2) node[midway, above] {};
\draw [ opacity = 1] (D) -- (B) node[midway, left] {};


\draw [dashed] (-1,2.75) -- (2,2.75);
\draw [dashed] (2,0.25) -- (2,2.75) ; 
\draw [ dashed] (-1,0.25) -- (2,0.25) ;
\draw [ dashed]  (-1,0.25) -- (-1,2.75);
%
%

\end{tikzpicture}
\caption{A face-weighted symmetric plabic graph.}\label{fig:symm}
    
\end{figure}

\paragraph{Move-symmetric plabic graphs.} One way to describe classical simple groups in types other than $A$ is as fixed point sets of a suitable involution on $\GL{n}$. Accordingly, one should expect that graphs modeling such groups are those with a certain symmetry. This is indeed so in type $C$. Define the \emph{midline} of a rectangle as the line joining the midpoints of its left and right sides. 
Say that a plabic graph is \emph{symmetric} if reflecting it in the midline one gets the same graph but with opposite color vertices. Furthermore, if such a graph is face-weighted, then we require that the reflection in the midline preserves the weights, see Figure~\ref{fig:symm}.
 Such graphs were introduced in~\cite{karpman2018combinatorics} and applied in \cite{karpman2018total} to the study of Lagrangian Grassmannians. It turns out that for a suitable symplectic form the boundary measurement matrix of such a graph is symplectic.

To describe the odd-dimensional orthogonal group, we consider plabic graphs that are, in a sense, as symmetric as they can be, namely symmetric up to \emph{square moves} (note that symmetric graphs per se necessarily have an even number of sources/sinks, unless a source on the midline is directly connected to a sink; for that reason, one cannot use symmetric graphs to describe odd-size matrices). A {square move} is a local modification of the graph shown in Figure \ref{fig:sqm}, see \cite[Section 12]{Pos}. 
  \begin{figure}[h]
 \centering
\begin{tikzpicture}[scale = 1]
\node  () at (0,0) {
\begin{tikzpicture}[scale = 0.9]
\node [draw,circle,color=black, fill=black,inner sep=0pt,minimum size=3pt, opacity = 1] (A) at (0,1) {};
\node [draw,circle,color=black, fill=white,inner sep=0pt,minimum size=3pt, opacity = 1] (B) at (1,1) {};
\node [draw,circle,color=black, fill=white,inner sep=0pt,minimum size=3pt, opacity = 1] (C) at (0,2) {};
\node [draw,circle,color=black, fill=black,inner sep=0pt,minimum size=3pt, opacity = 1] (D) at (1,2) {};
\node  (Y1) at (0.5,2.5){\footnotesize$y_1$};
\node(Y2) at (-0.5,1.5){ \footnotesize$y_4$ };
\node  (Y3) at(0.5,1.5) {\footnotesize$y_0$};
\node  (Y4) at (1.5,1.5) {\footnotesize$y_2$};
\node (Y5) at(0.5,0.5){ \footnotesize$y_3$};
\draw[ opacity = 1]  (-0.7, 0.3) -- (A)   node[midway, below] {};
\draw[ opacity = 1]  (B) -- (A) node[midway, below] {};
\draw [ opacity = 1] (A) -- (C) node[midway, left] {};
\draw [ opacity = 1](C) -- (D) node[midway, above] {};
\draw[ opacity = 1]  (-0.7, 2.7) -- (C) node[midway, above] {};
\draw [ opacity = 1] (B) -- (1.7,0.3) node[midway, below] {};
\draw [ opacity = 1] (D) -- (1.7,2.7) node[midway, above] {};
\draw [ opacity = 1] (D) -- (B) node[midway, left] {};


\end{tikzpicture}
};
\node  () at (3,0) {$\longrightarrow$};
\node  () at (7,0) {
\begin{tikzpicture}[scale = 0.9]
\node [draw,circle,color=black, fill=white,inner sep=0pt,minimum size=3pt, opacity = 1] (A) at (0,1) {};
\node [draw,circle,color=black, fill=black,inner sep=0pt,minimum size=3pt, opacity = 1] (B) at (1,1) {};
\node [draw,circle,color=black, fill=black,inner sep=0pt,minimum size=3pt, opacity = 1] (C) at (0,2) {};
\node [draw,circle,color=black, fill=white,inner sep=0pt,minimum size=3pt, opacity = 1] (D) at (1,2) {};
\node  (Y1) at (0.5,2.6){\footnotesize  $\displaystyle\frac{y_1}{1+y_0^{-1}}$};
\node(Y2) at (-0.9,1.5){ \footnotesize$y_4(1+y_0)$ };
\node  (Y3) at(0.5,1.5) {\footnotesize $y_0^{-1}$};
\node  (Y4) at (1.9,1.5) {\footnotesize $y_2(1+y_0)$};
\node (Y5) at(0.5,0.4){\footnotesize $\displaystyle\frac{y_3}{1+y_0^{-1}}$};
\draw[ opacity = 1]  (-0.7, 0.3) -- (A)   node[midway, below] {};
\draw[ opacity = 1]  (B) -- (A) node[midway, below] {};
\draw [ opacity = 1] (A) -- (C) node[midway, left] {};
\draw [ opacity = 1](C) -- (D) node[midway, above] {};
\draw[ opacity = 1]  (-0.7, 2.7) -- (C) node[midway, above] {};
\draw [ opacity = 1] (B) -- (1.7,0.3) node[midway, below] {};
\draw [ opacity = 1] (D) -- (1.7,2.7) node[midway, above] {};
\draw [ opacity = 1] (D) -- (B) node[midway, left] {};


\end{tikzpicture}
};
\end{tikzpicture}
\vspace{-10pt}
\caption{A square move.}\label{fig:sqm}
    
\end{figure}
 The indicated rule for face weights transformation is uniquely determined by the requirement of preservation of the boundary measurement matrix.  In terms of the dual quiver, a square move is a $Y$-type  \emph{cluster mutation} (in \cite{FG} such mutations are called $\chi$-type). 

\begin{definition}
A  plabic graph (weighted or not) is \emph{move-symmetric} if after performing square moves at all its square faces which have two opposite vertices on the midline, and then reflecting the resulting graph in the said line, one gets the initial graph (with initial weights if the graph was weighted) but with opposite color vertices,  see Figure \ref{fig:ms}. 
\end{definition}

Note that symmetric graphs do not have vertices on the midline and are, therefore, move-symmetric.

  \begin{figure}
 \centering
\begin{tikzpicture}[scale = 0.7]
\node () at (0,0){\begin{tikzpicture}[scale = 0.5]
\node [draw,circle,color=black, fill=black,inner sep=0pt,minimum size=3pt, opacity = 1] (A) at (0.3,0) {};
\node [draw,circle,color=black, fill=white,inner sep=0pt,minimum size=3pt, opacity = 1] (B) at (1,0.7) {};
\node [draw,circle,color=black, fill=black,inner sep=0pt,minimum size=3pt, opacity = 1] (C) at (1.7,0) {};
\node [draw,circle,color=black, fill=white,inner sep=0pt,minimum size=3pt, opacity = 1] (D) at (1,-0.7) {};
\node [draw,circle,color=black, fill=white,inner sep=0pt,minimum size=3pt, opacity = 1] (E) at (1,2) {};
\node [draw,circle,color=black, fill=black,inner sep=0pt,minimum size=3pt, opacity = 1] (F) at (1,-2) {};
\node () at (1,0) {\footnotesize$1$};
\node () at (0,1) {\footnotesize$y_2$};
\node () at (0,-1) {\footnotesize$2y_2$};
\node () at (2,1) {\footnotesize$y_3$};
\node () at (2,-1) {\footnotesize$\frac{1}{2}y_3$};
\node () at (1,2.5) {\footnotesize$y_1$};
\node () at (1,-2.5) {\footnotesize$y_1$};
\draw [] (A) -- (B) -- (C) -- (D) -- (A);
\draw (A) -- +(-1.3,0);
\draw (B) -- (E);
\draw (E) -- +(2,0);
\draw (E) -- +(-2,0);
\draw (C) -- +(1.3,0);
\draw (D) -- (F);
\draw (F) -- +(2,0);
\draw (F) -- +(-2,0);
\draw [dashed] (-1,-3) -- (-1,3) -- (3,3) -- (3,-3) -- cycle;
\end{tikzpicture}};
\node () at (4,0) {$\xrightarrow{\mbox{square move}}$};

\node () at (8,0){\begin{tikzpicture}[scale = 0.5]

\draw [dashed] (-1,-3) -- (-1,3) -- (3,3) -- (3,-3) -- cycle;
\node [draw,circle,color=black, fill=white,inner sep=0pt,minimum size=3pt, opacity = 1] (A) at (0.3,0) {};
\node [draw,circle,color=black, fill=black,inner sep=0pt,minimum size=3pt, opacity = 1] (B) at (1,0.7) {};
\node [draw,circle,color=black, fill=white,inner sep=0pt,minimum size=3pt, opacity = 1] (C) at (1.7,0) {};
\node [draw,circle,color=black, fill=black,inner sep=0pt,minimum size=3pt, opacity = 1] (D) at (1,-0.7) {};
\node [draw,circle,color=black, fill=white,inner sep=0pt,minimum size=3pt, opacity = 1] (E) at (1,2) {};
\node [draw,circle,color=black, fill=black,inner sep=0pt,minimum size=3pt, opacity = 1] (F) at (1,-2) {};
\node () at (1,0) {\footnotesize$1$};
\node () at (0,1) {\footnotesize$2y_2$};
\node () at (0,-1) {\footnotesize$y_2$};
\node () at (2,1) {\footnotesize$\frac{1}{2}y_3$};
\node () at (2,-1) {\footnotesize$y_3$};
\node () at (1,2.5) {\footnotesize$y_1$};
\node () at (1,-2.5) {\footnotesize$y_1$};
\draw [] (A) -- (B) -- (C) -- (D) -- (A);
\draw (A) -- +(-1.3,0);
\draw (B) -- (E);
\draw (E) -- +(2,0);
\draw (E) -- +(-2,0);
\draw (C) -- +(1.3,0);
\draw (D) -- (F);
\draw (F) -- +(2,0);
\draw (F) -- +(-2,0);

\end{tikzpicture}};
\node () at (12,0) {$\xrightarrow{\mbox{reflection}}$};

\node () at (16,0){\begin{tikzpicture}[xscale = 0.5, yscale = -0.5]

\draw [dashed] (-1,-3) -- (-1,3) -- (3,3) -- (3,-3) -- cycle;
\node [draw,circle,color=black, fill=white,inner sep=0pt,minimum size=3pt, opacity = 1] (A) at (0.3,0) {};
\node [draw,circle,color=black, fill=black,inner sep=0pt,minimum size=3pt, opacity = 1] (B) at (1,0.7) {};
\node [draw,circle,color=black, fill=white,inner sep=0pt,minimum size=3pt, opacity = 1] (C) at (1.7,0) {};
\node [draw,circle,color=black, fill=black,inner sep=0pt,minimum size=3pt, opacity = 1] (D) at (1,-0.7) {};
\node [draw,circle,color=black, fill=white,inner sep=0pt,minimum size=3pt, opacity = 1] (E) at (1,2) {};
\node [draw,circle,color=black, fill=black,inner sep=0pt,minimum size=3pt, opacity = 1] (F) at (1,-2) {};
\node () at (1,0) {\footnotesize$1$};
\node () at (0,1) {\footnotesize$2y_2$};
\node () at (0,-1) {\footnotesize$y_2$};
\node () at (2,1) {\footnotesize$\frac{1}{2}y_3$};
\node () at (2,-1) {\footnotesize$y_3$};
\node () at (1,2.5) {\footnotesize$y_1$};
\node () at (1,-2.5) {\footnotesize$y_1$};
\draw [] (A) -- (B) -- (C) -- (D) -- (A);
\draw (A) -- +(-1.3,0);
\draw (B) -- (E);
\draw (E) -- +(2,0);
\draw (E) -- +(-2,0);
\draw (C) -- +(1.3,0);
\draw (D) -- (F);
\draw (F) -- +(2,0);
\draw (F) -- +(-2,0);

\end{tikzpicture}};
\end{tikzpicture}
\caption{ A face-weighted move-symmetric plabic graph. 
}\label{fig:ms}
    
\end{figure}


\paragraph{The main result.} Let 
$\Omega_n := \sum_{i=1}^n (-1)^{i+1}E_{i, n+1 - i}$ (where $E_{i,j}$ is a matrix with a $1$ at position $(i,j)$ and zeros elsewhere) be the $n \times n$ anti-diagonal  matrix with alternating $\pm 1$ on the anti-diagonal. Note that $\Omega_n$ is skew-symmetric for even $n$ and symmetric for odd~$n$. Let $G(\Omega_n) := \{ A \in \GL{n} \mid A\Omega_nA^t = \Omega_n\}$ be the set of matrices $A$ such that the corresponding linear operator preserves the form $\Omega_n$. It is the symplectic group $  \Sp(\Omega_n)$ for even $n$ or the orthogonal group $ O(\Omega_n)$ for odd $n$. 
The special choice of the form $\Omega_n$ provides a canonical Poisson structure on  $G(\Omega_n) $. The corresponding upper and lower Borel subalgebras consist of, respectively, upper- and lower-triangular matrices. 

Define also the nonnegative part $G^{\geq 0} (\Omega_n)$ as the set of real matrices that lie in $G(\Omega_n) $ and are totally nonnegative. We will show in Section \ref{sec:tp} that this agrees with the definition of total nonnegativity in a reductive group given in \cite{Lus}.

Given an unweighted move-symmetric plabic graph $\Gamma$, denote by $ \F^{ms}(\Gamma)$ the space of all face weightings $\mathcal Y \in \F^{ms}(\Gamma)$ such that~$(\Gamma, \mathcal Y)$ is move-symmetric as a weighted graph. 



%




\begin{theorem}\label{thm1} 
Let $\Gamma$ be an  $n \times n$  perfectly orientable move-symmetric plabic graph. Then: 
\begin{enumerate} 
\item For any move-symmetric weighting $\mathcal Y$, the boundary measurement matrix $A := \mes(\mathcal Y)$ is in $G(\Omega_n)$, i.e., is symplectic if $n$ is even and orthogonal  if $n$ is odd. 
Moreover, for positive real weights we have $A \in G(\Omega_n)^{\geq 0} $.

\item There is a   log-canonical multiplicative Poisson bracket on $ \F^{ms}(\Gamma)$ whose pushforward by the boundary measurement map  $ \F^{ms}(\Gamma)\dashrightarrow G(\Omega_n)$ coincides with the standard Poisson structure on $G(\Omega_n)$. 
\end{enumerate}
\end{theorem}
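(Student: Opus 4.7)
The plan is to deduce both statements by tracking how the boundary measurement matrix $A := \mes(\mathcal{Y})$ transforms under each of the three elementary operations that constitute move-symmetry, then using that their composition acts as the identity on a move-symmetric weighted graph. These operations are: (a) a square move at a face crossed by the midline, (b) reflection in the midline, and (c) global color swap. A square move preserves $A$ by the defining property of the face-weight rule of Figure~\ref{fig:sqm}. Reflection is orientation-reversing on the plane: it bijects paths from $i$ to $j$ with paths from $n+1-i$ to $n+1-j$ and negates the winding index, but since $(-1)^{-\wind} = (-1)^{\wind}$, the reflected boundary measurement matrix is $w_0 A w_0$ with $w_0 = \sum_i E_{i, n+1-i}$. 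Global color swap, together with the re-orientation needed to restore the perfect-network property, transforms $A$ into $D w_0 (A^{-1})^t w_0 D^{-1}$, where $D = \mathrm{diag}\bigl(1,-1,1,\dots,(-1)^{n+1}\bigr)$ encodes the signs produced by the required reversal of edge orientations. Move-symmetry forces these three operations to compose to the identity on the weighted graph, yielding $A = D w_0 (A^{-1})^t w_0 D^{-1}$. Recognizing $\Omega_n = D w_0$, this rearranges exactly to $A \Omega_n A^t = \Omega_n$, so $A \in G(\Omega_n)$.

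For the total nonnegativity claim, I would invoke the double-Bruhat-cell parametrization of Proposition~\ref{dbc}: its networks realize Lusztig's Chevalley generators of $G(\Omega_n)$ as boundary measurement matrices. Move-symmetry is preserved under concatenation, and by repeatedly applying square moves to bring all square faces onto the midline one factors an arbitrary positively-weighted move-symmetric network as a concatenation of such elementary pieces. The image is then a product of Chevalley generators with positive coefficients, which lies in $G_{\geq 0}(\Omega_n)$ by definition (cf.\ Section~\ref{sec:tp}).

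For part~(2), take the bracket on $\F^{ms}(\Gamma)$ to be the log-canonical bracket associated with the \emph{folded dual quiver}: obtained from the dual quiver of $\Gamma$ by performing the square-move mutations at midline faces and then identifying vertices related by the reflection involution. Two ingredients are needed. First, $\F^{ms}(\Gamma) \hookrightarrow \F(\Gamma)$ is a Poisson submanifold for the Gekhtman--Shapiro--Vainshtein bracket, so that the induced bracket agrees with the folded one; this reduces to the reflection-plus-arrow-reversal symmetry of the dual-quiver adjacency matrix, a direct combinatorial check. Second, by \cite{GSV2} the pushforward of the unfolded GSV bracket to $\GL{n}$ is the standard Sklyanin bracket, and because $\Omega_n$ is chosen so that upper- and lower-triangular matrices restrict to opposite Borel subgroups of $G(\Omega_n)$, the Sklyanin bracket restricts on the Poisson submanifold $G(\Omega_n) \subset \GL{n}$ to the standard Poisson structure on $G(\Omega_n)$.

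The principal obstacle is the sign bookkeeping in Part~(1): verifying that the combined effect of winding-sign changes from reflection and orientation reversal produces exactly the alternating $\pm 1$ pattern of $\Omega_n$, rather than a diagonal conjugate, is a delicate computation. This matching is precisely what forces the very specific choice of form $\Omega_n = \sum_i (-1)^{i+1} E_{i, n+1-i}$ in the statement of the theorem, and the same sign conventions must then be re-used (for the $r$-matrix computation) to check the Poisson-submanifold property $G(\Omega_n) \subset \GL{n}$ used in Part~(2).
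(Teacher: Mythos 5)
Your outline of Part 1's group-membership claim follows the same route as the paper: decompose the move-symmetry involution into square moves, reflection, and recoloring, track the effect of each on the boundary measurement matrix, and conclude $A = \Omega_n A^{-t}\Omega_n^{-1}$. But the step you label as ``delicate sign bookkeeping'' is in fact the entire mathematical content of this part, and you have asserted rather than proved it. The recoloring formula $A \mapsto D_n A^{-t} D_n$ is not a routine reversal-of-orientations computation: reversing all edges of a perfect network yields the transpose of the matrix read right-to-left, and relating the right-to-left and left-to-right boundary measurement matrices of the same graph requires an argument (the paper passes through Postnikov's Grassmannian boundary measurement map and the fact that $(D_n\lw A, I_n)$ and $(I_n, (-1)^{n-1}D_nB\lw)$ represent the same point of $\Gr_{n,2n}$). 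One must also verify that the recolored graph $\bar\Gamma$ is perfectly orientable (equivalently, that $\Gamma$ is non-degenerate) before $A^{-1}$ even makes sense; the paper deduces this from move-symmetry. Your treatment of nonnegativity takes a genuinely different and riskier route: you propose to factor an arbitrary positively weighted move-symmetric network into the elementary double-Bruhat pieces of Proposition \ref{dbc}. No such factorization exists for a general plabic graph (the graphs $\Gamma_{\word}$ form a very special subclass), so this step is a gap. It is also unnecessary: with the paper's definition of $G_{\geq 0}(\Omega_n)$ as $G(\Omega_n)$ intersected with the totally nonnegative matrices, positivity follows immediately from the group membership together with the type-$A$ fact that positive weights give totally nonnegative boundary measurement matrices; the only additional work (Section \ref{sec:tp}) is reconciling this with Lusztig's definition.

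Part 2 rests on a false premise. Neither $\F^{ms}(\Gamma)\subset\F(\Gamma)$ nor $G(\Omega_n)\subset\GL{n}$ is a Poisson submanifold. For the first: $\F^{ms}(\Gamma)$ is cut out by $y_{i'}=y_i$, and $\{y_{i'}-y_i,\, y_j\} = (q_{i'j}-q_{ij})y_iy_j$ does not vanish on that locus unless $q_{i'j}=q_{ij}$, which fails for generic pairs of faces off the midline (the symmetry gives $q_{i'j'}=q_{ij}$, not $q_{i'j}=q_{ij}$), so your ``direct combinatorial check'' would come out negative. For the second: the symplectic leaves of the Sklyanin bracket on $\GL{n}$ are governed by double Bruhat cells, and $\Sp_{2k}$ is not a union of them. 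The correct mechanism, and the one the paper uses, is the induced (Dirac-type) Poisson structure on the fixed locus of a Poisson involution: $\tau$-invariant functions vanishing on the fixed locus form a Poisson ideal inside the algebra of all $\tau$-invariant functions, and this reduction is functorial (Proposition \ref{prop:functor}), so the Poisson property of $\mes$ in type $A$ descends. One must then still show that the induced structure on $G(\Omega_n)$ equals the standard one (Proposition \ref{prop:induce}, an $r$-matrix and gradient computation) and compute the induced bracket on $\F^{ms}(\Gamma)$ explicitly; the latter is where the characteristic factors of $\tfrac{1}{2}$ and $\tfrac{1}{4}$ in the folded quiver appear, which a naive restriction would never produce.
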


Put differently, let $\Rect_n^{ms} := \bigsqcup_\Gamma \F^{ms}(\Gamma)$, where the union is taken over all isotopy classes of $n \times n$ perfectly orientable move-symmetric plabic graphs. Then Theorem \ref{thm1} says that the boundary measurement map is a morphism of Poisson-Lie monoids $\Rect_n^{ms} \dashrightarrow G(\Omega_n)$ which takes the positive part of $\Rect_n^{ms}$ to the nonnegative part of $ G(\Omega_n)$. 

Theorem \ref{thm1} also admits a centerless version. The center of the monoid  $\Rect_n^{ms}$ of move-symmetric plabic graphs consists of graphs  shown in Figure \ref{fig:center} with $\lambda = \pm 1$. Taking the quotient by the center is equivalent to forgetting the weights of the uppermost and lowermost faces. The boundary measurement map on $\Rect_n^{ms} / Z(\Rect_n^{ms})$ takes values in the centerless group $\PSp(\Omega_n)$ for even $n$ and $\SO(\Omega_n)$ for odd $n$. 

Also note that, in the real case, $\Omega_n$ is either a real symplectic form or a real inner product of split signature. So, move-symmetric plabic graphs with real weights correspond to split real forms $\Sp_{2k}(\R)$, $O_{k, k+1}(\R)$ of complex Lie groups $\Sp_{2k}$, $O_{2k+1}$.

The proof of Theorem \ref{thm1} consists of three parts. In Section \ref{sec2}, we show that the boundary measurement matrix of a move-symmetric plabic graph is in $G(\Omega_n)$. In Section \ref{sec3}, we prove that there exists a multiplicative Poisson structure on the space $\F^{ms}(\Gamma)$ of move-symmetric weightings of a move-symmetric graph such that the boundary measurement map $ \F^{ms}(\Gamma)\dashrightarrow G(\Omega_n)$ is Poisson. Finally, in Section \ref{sec:pbc}, we show that this bracket is log-canonical and describe the corresponding quiver.

\medskip

\section{The boundary measurement matrix of a move-symmetric plabic graph}\label{sec2}

\paragraph{The main lemma.}
Let $\Gamma$ be a move-symmetric plabic graph. Define a birational involution $\sigma \colon \F(\Gamma) \dashedrightarrow \F(\Gamma)$ on its face weight space as the composition of square moves at all square faces which have two opposite vertices on the midline, and reflection in the midline. The fixed point set of $\sigma$ is the space $ \F^{ms}(\Gamma)$  of move-symmetric face weightings. 

Recall that $\Omega_n = \sum_{i=1}^n (-1)^iE_{i, n+1 - i}$ is the $n \times n$ anti-diagonal  matrix with alternating $\pm 1$ on the anti-diagonal, and let $\tau \colon \GL{n} \to \GL{n}$ be an involution given by $\tau(A) := \Omega_n A^{-t} \Omega_n^{-1}$, where $A^{-t} := (A^{-1})^t$. The fixed point set of $\tau$ is the subgroup $G(\Omega_n) = \{A \in \GL{n} \mid  A\Omega_n A^t = \Omega_n\}$.
The goal of this section is to establish the following.
\begin{lemma}\label{prop:sigma}
Let $\Gamma$ be an $n \times n$ perfectly orientable move-symmetric plabic graph. Then the following diagram commutes:
\begin{equation}\label{mainDiag}
\begin{tikzcd}
\F(\Gamma) \arrow[d,dashed,"\mes", swap] \arrow[dashed, r,"\sigma"]
&\F( \Gamma) \arrow[d, dashed, "\mes"]  \\ 
\GL{n}  \arrow[r,"\tau"] &
\GL{n}.
\end{tikzcd}
\end{equation}

\end{lemma}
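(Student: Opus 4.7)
My plan is to reduce the commutativity of the diagram to understanding how the boundary measurement matrix transforms under midline reflection and under color flipping.

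First, I would decompose $\sigma = \rho \circ s$, where $s$ applies square moves at all midline faces with two opposite vertices on the midline and $\rho$ is midline reflection. Since square moves preserve the boundary measurement matrix, the weighting $\mathcal Y^{sm} := s(\mathcal Y)$ on $\Gamma^{sm}$ satisfies $\mes_{\Gamma^{sm}}(\mathcal Y^{sm}) = \mes_\Gamma(\mathcal Y) =: A$, so it suffices to track what reflection does.

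Second, for any perfect network $N$ with boundary measurement matrix $B$, I would show that the reflected network $N^R$ has matrix $J B J$, where $J$ is the $n \times n$ antidiagonal permutation matrix. Indeed, paths in $N^R$ from source $i$ to sink $j$ are reflections of paths in $N$ from source $n{+}1{-}i$ to sink $n{+}1{-}j$; edge weights are preserved; and although the winding index is reversed in sign by the reflection, the factor $(-1)^{\wind}$ is invariant. Applying this to $(\Gamma^{sm}, \mathcal Y^{sm})$, and using the move-symmetry identity $\Gamma^{sm,R} = \bar\Gamma$ (where $\bar\Gamma$ is $\Gamma$ with interior colors flipped), one obtains $\mes_{\bar\Gamma}(\sigma(\mathcal Y)) = JAJ$.

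Third, I would compare this with $\mes_\Gamma(\sigma(\mathcal Y))$, which uses the original coloring. The key step is the \emph{color-flip identity}: for any weighting $\mathcal Z$ on the underlying embedded graph,
\[
\mes_\Gamma(\mathcal Z) = D \cdot \mes_{\bar\Gamma}(\mathcal Z)^{-t} \cdot D, \qquad D := \operatorname{diag}(1, -1, 1, \ldots, (-1)^{n+1}).
\]
Granting this and using the identities $DJ = \Omega_n$ and $JD = \Omega_n^{-1}$ (both verifiable directly from the definitions, irrespective of the parity of $n$), one then gets
\[
\mes_\Gamma(\sigma(\mathcal Y)) = D (JAJ)^{-t} D = (DJ)\, A^{-t}\, (JD) = \Omega_n \, A^{-t}\, \Omega_n^{-1} = \tau(A),
\]
closing the diagram.

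The main obstacle is the color-flip identity. A perfect orientation of $\bar\Gamma$ is obtained by reversing every edge of a perfect orientation of $\Gamma$; under this correspondence, paths contributing to $\mes_{\bar\Gamma}$ are precisely the reverse paths in $\Gamma$, but the roles of sources and sinks are swapped. To recover the ``sources on the left'' convention for $\bar\Gamma$, one must compose with a relabeling of boundary vertices, and this introduces signs from both the reversal of winding indices and the antipodal reindexing. I expect the explicit sign operator $B \mapsto D B^{-t} D$ to emerge from carefully tracking these two contributions on a path-by-path basis. An alternative, potentially cleaner, route is via the Lindström--Gessel--Viennot expansion of $\mes$ as a sum over non-intersecting path systems, and computing how the corresponding minors transform when the underlying directed graph is reversed.
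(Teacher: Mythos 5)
Your proposal follows the paper's proof essentially step for step: factor $\sigma$ into square moves, midline reflection, and recoloring; show that reflection conjugates the boundary measurement matrix by the antidiagonal permutation $\lw$; show that recoloring acts by $A \mapsto D_n A^{-t} D_n$; and compose, using $D_n\lw = \Omega_n$. Two bookkeeping points deserve care. First, your two intermediate identities are correct for \emph{edge} weightings but not, as literally stated, for \emph{face} weightings: reflecting a network while keeping its edge weights inverts every face weight (the counterclockwise orientation of each face is reversed), and likewise reversing all edges to pass from $\Gamma$ to $\bar\Gamma$ inverts every face weight. So the reflection step really gives $\mes_{\bar\Gamma}\bigl(\sigma(\mathcal Y)^{-1}\bigr) = JAJ$, and the color-flip identity should read $\mes_{\Gamma}(\mathcal Z^{-1}) = D\,\mes_{\bar\Gamma}(\mathcal Z)^{-t}D$; the two inversions cancel in the composition, so your final formula is right, but each displayed identity is off by a weight inversion. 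This is precisely why the paper factors $\sigma$ as $\mathcal Y \mapsto \rho_*(\mathcal Y)^{-1}$ followed by $\mathcal Y \mapsto \mathcal Y^{-1}$. Second, for $\mes_{\bar\Gamma}(\cdot)^{-t}$ to make sense one must check that $\bar\Gamma$ is perfectly orientable and that $\Gamma$ is non-degenerate; the paper gets this from move-symmetry (square moves preserve perfect orientability, so $\bar\Gamma = \rho(\Gamma')$ is perfectly orientable, and a plabic graph is non-degenerate exactly when both it and its recoloring are perfectly orientable). Your sketch of the color-flip identity via edge reversal is also the paper's route, though the paper pins down the sign operator $D_n$ by comparing the left-to-right and right-to-left Grassmannian boundary measurement matrices rather than by a path-by-path sign count.
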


This lemma immediately implies the first part of Theorem \ref{thm1}: for a move-symmetric plabic graph $\Gamma$ with a move-symmetric weighting $\mathcal Y \in \F^{ms}(\Gamma)$ the boundary measurement matrix $A$ belongs to $G(\Omega_n)$ (this also implies  that for real positive weights we have   $A \in G(\Omega_n)^{\geq 0}$). Indeed, since any $\mathcal Y \in \F^{ms}(\Gamma)$ is fixed by the involution $\sigma \colon \F(\Gamma) \dashrightarrow \F(\Gamma)$, the corresponding boundary measurement matrix $A:=\mes(\mathcal Y)$ is fixed by $\tau$, meaning that $A \in G(\Omega_n) $.

 Furthermore, we will also use Lemma \ref{prop:sigma} to prove that the boundary measurement map $\mes \colon  \F^{ms}(\Gamma) \dashrightarrow G(\Omega_n)$ is Poisson, and thus establish the second part of  Theorem \ref{thm1}.

To prove Lemma \ref{prop:sigma}, we  study how each of the steps defining the involution $\sigma$ affects the boundary measurement map. Since square moves do not alter the boundary measurement matrix, it suffices to understand how it changes under reflection in the midline and recoloring of vertices. 
\paragraph{Reflection of plabic graphs.} The following describes the relation between the boundary measurement maps of plabic graphs which are symmetric to each other with respect to the midline.
\begin{proposition}\label{prop:reflect}
Let $\Gamma$ be an $n \times n$ perfectly orientable plabic graph, and let $\rho(\Gamma)$ be the graph obtained from $\Gamma$ by reflection in the midline. Consider the map $\rho_* \colon \F(\Gamma) \to \F(\rho(\Gamma))$ induced by the reflection, and let $\rho_*(\mathcal Y )^{-1}$ be the weighting obtained from $\rho_*(\mathcal Y)$ by inverting all the weights.  
Let also $\lw := \sum_{i=1}^n E_{i, n+1 - i}  \in S_n$ be the order-reversing permutation (throughout the paper, we identify permutations and corresponding permutation matrices). Then the following diagram commutes 
\begin{equation*}
\begin{tikzcd}[column sep = 1.5cm]
\F(\Gamma) \arrow[d,dashed,"\mes", swap] \arrow[r,"\mathcal Y \,\mapsto\, \rho_*(\mathcal Y)^{-1}   "]
&\F(\rho(\Gamma)) \arrow[d, dashed, "\mes"]  \\ 
M_n  \arrow[r,"A \,\mapsto\, \lw A\lw "] &
M_n.
\end{tikzcd}
\end{equation*}
\end{proposition}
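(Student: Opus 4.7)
The plan is to reduce the proposition to a path-by-path comparison by lifting from face weights to edge weights. First I would fix a perfect orientation $\mathcal O$ of $\Gamma$ and choose any edge weighting whose face weights realize $\mathcal Y$. Reflecting the oriented, edge-weighted graph in the midline yields an oriented, edge-weighted graph on $\rho(\Gamma)$; the colors of internal vertices and the perfect-orientation condition are preserved since both are defined vertex-locally.

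Next I would identify which face weighting on $\rho(\Gamma)$ is induced by the reflected edge weights. Since reflection reverses the orientation of the ambient plane, the counter-clockwise traversal of a reflected face $\rho(F)$ corresponds to the clockwise traversal of $F$ in $\Gamma$. Hence every sign $\eps_i$ in the defining formula $\wt(F) = \prod_i \wt(e_i)^{\eps_i}$ flips, and the face weight assigned to $\rho(F)$ becomes $\wt(F)^{-1}$. That is, the reflected edge-weighted graph is precisely $(\rho(\Gamma), \rho_*(\mathcal Y)^{-1})$, which is the object whose boundary measurement matrix I need to compute.

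The last step is to compare the sums \eqref{eq:bmm} for $\Gamma$ and $\rho(\Gamma)$. Reflection permutes the boundary sources and sinks by $i \mapsto n+1-i$, and the map $\gamma \mapsto \rho(\gamma)$ is a bijection between directed paths from source $n+1-k$ to sink $n+1-l$ in $\Gamma$ and directed paths from source $k$ to sink $l$ in $\rho(\Gamma)$. Edge weights along the path are preserved, so $\wt(\rho(\gamma)) = \wt(\gamma)$; the winding number flips sign under the orientation-reversing reflection, but $(-1)^{-w} = (-1)^w$, so the sign in \eqref{eq:bmm} is unchanged. Summing, the $(k,l)$ entry of the boundary measurement matrix $A'$ of $(\rho(\Gamma), \rho_*(\mathcal Y)^{-1})$ equals $A_{n+1-k,\, n+1-l}$, which coincides with $(\lw A \lw)_{kl}$ since $\lw$ is the permutation matrix for $i \mapsto n+1-i$.

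The main obstacle, such as it is, lies in making the two geometric observations about reflection rigorous: that it inverts each face weight, and that it negates the winding index while preserving its parity. Both are consequences of reflection being an orientation-reversing isometry of the plane; the rest is a direct bijection between path sums.
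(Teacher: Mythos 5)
Your proposal is correct and takes essentially the same approach as the paper, whose proof consists of exactly the two observations you elaborate: that reflecting an edge-weighted network transforms the boundary measurement matrix by $A \mapsto \lw A \lw$, and that reflecting while keeping edge weights fixed inverts all face weights. Your path-by-path check of the winding-index parity and the sign flip of the face-boundary exponents simply supplies the details the paper leaves to the reader.
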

\begin{proof}
Let $\mathcal Y \in \F(\Gamma) $ be a collection of face weights on $\Gamma$, and let $\mes(\mathcal Y) = A$ be the corresponding boundary measurement matrix. We need to show that the graph $(\rho(\Gamma),\rho_*(\mathcal Y)^{-1})$, obtained from $(\Gamma, \mathcal Y)$ by reflection and inverting the face weights, has boundary measurement matrix $ \lw  A \lw $.
To that end, take a perfect orientation of $\Gamma$ and choose arbitrary edge weights in the gauge-equivalence class determined by the face weighting $\mathcal Y$. This gives a perfect network $\mathcal N$. Consider the network $\rho(\mathcal N)$ obtained from $\mathcal N$ by reflection in the midline.  Observe that reflection gives a weight-preserving bijection between paths from source $i$ to sink $j$ in $\mathcal N$ and paths from source $n+1-i$ to sink $n+1-j$ in $\rho(\mathcal N)$ (recall that we always label sources and sinks from bottom to top). So, the boundary measurement matrix of $\rho(\mathcal N)$ is $ \lw  A \lw $. At the same time, note that the weight of every face of $\rho(\mathcal N)$ is reciprocal to the weight of its mirror image in $\mathcal N$  (since relative orientations of edges around a face are reversed
upon reflection). So, the network $\rho(\mathcal N)$, viewed as a face-weighted plabic graph, is exactly $(\rho(\Gamma),\rho_*(\mathcal Y)^{-1})$, proving that the latter has boundary measurement matrix $ \lw  A \lw $.
\end{proof}

\paragraph{Recoloring of plabic graphs.} Next, we need to describe what happens to the boundary measurement matrix when we recolor vertices. For a plabic graph $\Gamma$, denote the same graph but with opposite color vertices by $\op$. 
Recall that a perfectly orientable plabic graph $\Gamma$ is \emph{non-degenerate} if the determinant of its boundary measurement matrix, viewed as a function on the face weight space $\F(\Gamma)$, is not identically $0$. 
\begin{proposition}\label{prop:nondeg} A perfectly orientable plabic graph $\Gamma$ is non-degenerate if and only if  $\op$ is also perfectly orientable (so, a plabic graph $\Gamma$ is non-degenerate if and only if both $\Gamma$ and $\op$ are perfectly orientable). 
\end{proposition}
\begin{proof}
Assume that  $\Gamma$ is a non-degenerate $n \times n$ plabic graph. Take some perfect orientation of $\Gamma$. Since  $\Gamma$ is non-degenerate, by \cite[Theorem 1.1]{Talaska}, it admits a collection of $n$ self-avoiding vertex-disjoint directed paths, each connecting a source to a sink.  Reverse the directions of all edges in $\Gamma$ that do not belong to the said collection. This gives a perfect orientation of $\op$, as needed.

Conversely, assume that $\op$ is perfectly orientable. Take  its perfect orientation and reverse the directions of all edges along with the colors of all vertices. This gives an orientation of $\Gamma$ which is perfect except for all sources being on the right and sinks on the left. Fix some genuine perfect orientation of $\Gamma$ and compare it to the so-obtained one. Since both orientations are perfect (in the sense that each white vertex has a single incoming edge and each black vertex has a single outgoing edge), the union of all edges on which they disagree is a vertex-disjoint collection of directed paths, each being either a self-avoiding cycle or a self-avoiding path joining a source to a sink (cf. \cite[Proof of Theorem 10.1.]{Pos}). Furthermore, since all sources of either orientation are sinks of the other, the said collection must have exactly $n$ paths of the latter type. So, by \cite[Theorem 1.1]{Talaska}, $\Gamma$ is non-degenerate, as desired. \end{proof}

\begin{proposition}\label{prop:recolor}
Let $\Gamma$ be an $n \times n$ non-degenerate plabic graph. Consider the map $ \F(\Gamma) \to \F(\bar \Gamma)$, $\mathcal Y \mapsto \mathcal Y^{-1}$ given by inverting all the weights. 
 Let also 
 $D_n := \sum_{i=1}^n (-1)^{i+1}E_{i,i} \in \GL{n}$ be the diagonal matrix with alternating $\pm 1$ on the diagonal.  Then the following diagram commutes 
\begin{equation*}
\begin{tikzcd}[column sep = 2cm]
\F(\Gamma) \arrow[d,dashed,"\mes", swap] \arrow[r,"\mathcal Y \,\mapsto \, \mathcal Y^{-1}"]
&\F(\op) \arrow[d, dashed, "\mes"]  \\ 
\GL{n}  \arrow[r,"A\, \mapsto \, D_n A^{-t} D_n"] &
\GL{n}.
\end{tikzcd}
\end{equation*}
\end{proposition}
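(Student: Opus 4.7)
The plan is to recast the stated matrix identity as an identity on minors and then prove that identity combinatorially. A direct computation with the cofactor formula gives
\[
(D_n A^{-t} D_n)_{ij} \;=\; \frac{M_{ij}(A)}{\det A},
\]
where $M_{ij}(A)$ denotes the minor of $A$ obtained by deleting row $i$ and column $j$; the two signs supplied by $D_n$ cancel precisely the cofactor sign $(-1)^{i+j}$. Thus the proposition is equivalent to the identity
\[
\mes(\op,\mathcal Y^{-1})_{ij}\cdot \det \mes(\Gamma,\mathcal Y) \;=\; M_{ij}\bigl(\mes(\Gamma,\mathcal Y)\bigr),
\]
an equality of rational functions of the face weights.

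First I would fix a perfect orientation $O$ of $\Gamma$ and a perfect orientation $\bar O$ of $\op$, both guaranteed by Proposition~\ref{prop:nondeg}, together with edge weights realising the face weightings $\mathcal Y$ and $\mathcal Y^{-1}$. In these terms both sides admit path-generating expansions of Lindström--Gessel--Viennot / Postnikov--Talaska type: $\det\mes(\Gamma,\mathcal Y)$ and $M_{ij}\bigl(\mes(\Gamma,\mathcal Y)\bigr)$ are signed sums over vertex-disjoint path collections in $(\Gamma,O)$ (from $[n]$ to $[n]$ and from $[n]\smallsetminus\{i\}$ to $[n]\smallsetminus\{j\}$ respectively), each dressed by a configuration of pairwise disjoint directed cycles disjoint from the path collection; while $\mes(\op,\mathcal Y^{-1})_{ij}$ is a signed sum over single directed paths $\gamma$ from $i$ to $j$ in $(\op,\bar O)$, similarly dressed by cycles.

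The core ingredient is the path-reversal correspondence already exploited in the proof of Proposition~\ref{prop:nondeg}: the orientation $\bar O$ of $\op$ can be obtained from $O$ by fixing a complete vertex-disjoint path system $\mathcal P$ in $(\Gamma,O)$ and reversing every edge not in $\mathcal P$. Given a path $\gamma$ from $i$ to $j$ in $(\op,\bar O)$ together with such a $\mathcal P$, the symmetric difference $\mathcal P \mathbin\triangle \gamma$, read with the orientation of $O$, is a vertex-disjoint union of $n-1$ paths in $\Gamma$ from $[n]\smallsetminus\{i\}$ to $[n]\smallsetminus\{j\}$ together with some directed cycles. This correspondence matches the monomial contributions on the two sides of the minor identity: the weight inversion $\mathcal Y\mapsto\mathcal Y^{-1}$ is exactly what causes edges shared by $\gamma$ and $\mathcal P$ to contribute with inverse weights on the $\op$ side, which then cancel against their contribution to $\det\mes(\Gamma,\mathcal Y)$ in the denominator, leaving precisely the contribution to $M_{ij}$.

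The main obstacle will be the bookkeeping of signs and cycle contributions. Postnikov's winding sign $(-1)^{\operatorname{wind}(\gamma)}$ must be tracked across the reversal of the $\mathcal P$-segments of $\gamma$, the permutation sign appearing in Talaska's determinantal expansion must match the cofactor sign absorbed into $D_n$, and the dressing cycles in $\Gamma$ and $\op$ must correspond under the same reversal operation. These verifications are technical but of a standard type in the dimer and plabic-graph literature; once they are dispatched, the combinatorial identity above, and hence the proposition, follows.
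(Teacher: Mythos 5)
Your reduction of the proposition to the identity
$\mes(\op,\mathcal Y^{-1})_{ij}\cdot\det \mes(\Gamma,\mathcal Y)=M_{ij}\bigl(\mes(\Gamma,\mathcal Y)\bigr)$
is correct, and path reversal is the right starting point, but the combinatorial argument you sketch does not yet prove that identity. The correspondence $\gamma\mapsto\mathcal P\mathbin{\triangle}\gamma$ involves only the one fixed path system $\mathcal P$ used to define the perfect orientation of $\op$, whereas $\det\mes(\Gamma,\mathcal Y)$ and $M_{ij}\bigl(\mes(\Gamma,\mathcal Y)\bigr)$ are signed sums over \emph{all} vertex-disjoint path systems dressed by cycle configurations. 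At best your map matches the products of $\gamma$ with the single $\mathcal P$-term of the determinant against some of the terms of $M_{ij}$; every cross term involving a path system $\mathcal Q\neq\mathcal P$, and every unmatched term of $M_{ij}$, must then cancel via a sign-reversing involution that you neither construct nor mention. Relatedly, $M_{ij}/\det A$ is a ratio of sums, not a sum of ratios, so the monomial-by-monomial cancellation you describe (inverse weights on shared edges cancelling ``against their contribution to $\det\mes(\Gamma,\mathcal Y)$ in the denominator'') cannot literally be carried out term by term. Finally, the sign bookkeeping you defer --- winding indices across reversal of the $\mathcal P$-segments, the permutation and cycle signs in Talaska's expansion, the cofactor signs absorbed by $D_n$ --- is not a routine afterthought here: it is essentially the entire content of the statement, and no standard reference dispatches it in this exact form.

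For comparison, the paper's proof avoids all of this. It reverses \emph{all} edges of a weighted perfect network realizing $(\Gamma,\mathcal Y)$, observes that the result is a right-to-left perfect network for $(\op,\mathcal Y^{-1})$ whose boundary measurement matrix is $A^t$ (plain path reversal, no disjointness or sign analysis needed), and thereby reduces everything to the claim that the left-to-right and right-to-left boundary measurement matrices $B$ and $C$ of a single non-degenerate graph satisfy $C=D_nB^{-1}D_n$. That claim follows from Postnikov's theorem that the point of $\Gr_{n,2n}$ defined by the boundary measurement map of a plabic graph in a disk is independent of the choice of perfect orientation: the two orientations give the representatives $(D_n\lw B,\,\Id_n)$ and $(\Id_n,\,(-1)^{n-1}D_nC\lw)$ of the same row span, which forces the relation. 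If you insist on a direct combinatorial proof, you will essentially be reproving that orientation-independence; it is far more economical to cite it.
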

\begin{proof}
Let $(\Gamma, \mathcal Y)$ be a face-weighted $n \times n$ non-degenerate plabic graph with boundary measurement matrix $\mes(\mathcal Y) = A$.
Taking a perfect orientation of $\Gamma$ and choosing arbitrary edge weights in the gauge-equivalence class determined by the face weighting $\mathcal Y$, we get a perfect network $\mathcal N$. Reverse all edges of $\mathcal N$, keeping the edge weights intact. The resulting network $\bar {\mathcal N}$ is perfect except for all sources being on the right and sinks on the left.  The corresponding plabic graph is $\op$ with face weights given by $\mathcal Y^{-1}$. The boundary measurement matrix of $\bar{ \mathcal N}$, which can be viewed as the  \emph{right-to-left} boundary measurement matrix of $(\bar \Gamma,\mathcal Y^{-1})$,  is $A^t$ (indeed, path reversal gives a weight-preserving one-to-one correspondence between directed paths in $\mathcal N$ from source $i$ to sink $j$ and  directed paths in $\bar{ \mathcal N}$ from source $j$ to sink $i$).  We need to show that the usual, left-to-right, boundary measurement matrix of $(\bar \Gamma,\mathcal Y^{-1})$ is $D_n A^{-t} D_n$. To that end, it suffices to prove that left-to-right and right-to-left boundary measurement matrices of a non-degenerate plabic graph are related by the involution $A \leftrightarrow D_n A^{-1} D_n$. 

Let $A$ and $B$ be, respectively, left-to-right and right-to-left boundary measurement matrices of a face-weighted $n \times n$ non-degenerate plabic graph (the above argument shows that for a non-degenerate graph both boundary measurement maps are well-defined). We aim to show that $B = D_n A^{-1} D_n$. To that end, recall the notion of the boundary measurement map into the Grassmannian \cite[Definition 4.6]{Pos}. Consider a face-weighted plabic graph $\Gamma$ in a disk, with boundary vertices labeled by $1, \dots, m$, in counter-clockwise order. Take any perfect orientation of $\Gamma$ (i.e. an orientation such that each white vertex has a single incoming edge and each black vertex has a single outgoing edge; there is no condition on location of sources and sinks). Each such orientation (assuming at least one exists) has the same number $k$ of sources. The corresponding boundary measurement matrix $C$ is a $k \times m$ matrix, with rows labeled by sources $i_1 < \dots < i_k$ and columns labeled by all boundary vertices $1, \dots, m$. The $(i,j)$ entry $C_{ij}$ of that matrix is defined as the right-hand side of \eqref{eq:bmm} times $(-1)^{s(i,j)}$, where $s(i,j)$ is the number of sources whose labels are strictly between $i$ and $j$ in terms of the linear ordering on the set $1, \dots, m$. In particular, if the label $j$ corresponds to a source, then $$C_{ij} = \begin{cases} 1\quad  \mbox{if } i = j, \\ 0\quad  \mbox{ if } i \neq j\end{cases}$$ (indeed, in that case there are no directed paths from $i$ to $j$ unless $i = j$, in which case there is an empty path whose weight is, by definition, equal to $1$). The so-defined boundary measurement matrix depends on the choice of a perfect orientation, but the associated element of $\Gr_{k,m}$ does not \cite[Theorem 10.1]{Pos}.

\begin{example}
Figure \ref{fig:grex} shows two perfect orientations of the same face-weighted plabic graph, along with the corresponding boundary measurement matrices. The matrix on the left is the reduced echelon form of the matrix on the right, so these matrices indeed define the same element in $\Gr_{2,3}$.

  \begin{figure}
 \centering
\begin{tikzpicture}[scale = 1]
        \node at (4,0) {

\begin{tikzpicture}[scale = 1]
 \node  at (30:0.5) [text opacity = 0.3]  {\footnotesize$y$};
     \node at (150:0.5) [text opacity = 0.3]  {\footnotesize$x$};
        \node at (-90:0.5)  [text opacity = 0.3] {\footnotesize$x^{-1}y^{-1}$};
\draw [dashed, ] (0,0) circle [radius=1cm];
\node [draw,circle,color=black, fill=black,inner sep=0pt,minimum size=3pt, opacity = 1] (B) at (0,0) {};
 \draw [->-, thick](B) -- (-30:1)node[midway, above] {\footnotesize $y$};;
  \draw [->- , thick] (210:1)--(B)node[midway, above] {\footnotesize $x$};;
    \draw  [->-, thick ](90:1)--(B) node[midway, right] {\footnotesize $\!1$};
 \node [label = {-30:\footnotesize $3$}] at (-20:0.8)  {};
 \node [label = {above:\footnotesize$1$}] at (90:0.8)  {};
  \node [label = {210:\footnotesize$2$}] at (200:0.8)  {};
        \end{tikzpicture}
        };
             \node at (7,0) {
$\left(\begin{array}{ccc}1 & 0 & -y \\0 & 1 & xy\end{array}\right)$
        };

            \node at (11,0) {
\begin{tikzpicture}[scale = 1]
\draw [dashed, ] (0,0) circle [radius=1cm];
\node [draw,circle,color=black, fill=black,inner sep=0pt,minimum size=3pt, opacity = 1] (B) at (0,0) {};
 \draw [->-, thick](-30:1)--(B)node[midway, above] {\footnotesize $\,\,\,y^{-1}$};;
  \draw [->- , thick] (210:1) -- (B)node[midway, above] {\footnotesize $x$};;
    \draw  [->- , thick](B) -- (90:1) node[midway, right] {\footnotesize $\!1$};
 \node [label = {-30:\footnotesize $3$}] at (-20:0.8)  {};
 \node [label = {above:\footnotesize$1$}] at (90:0.8)  {};
  \node [label = {210:\footnotesize$2$}] at (200:0.8)  {}; \node  at (30:0.5) [text opacity = 0.3]  {\footnotesize$y$};
     \node at (150:0.5) [text opacity = 0.3]  {\footnotesize$x$};
        \node at (-90:0.5)  [text opacity = 0.3] {\footnotesize$x^{-1}y^{-1}$};
        \end{tikzpicture}
        };
                     \node at (14,0) {
$\left(\begin{array}{ccc}x & 1 & 0  \\ -y^{-1} &0 & 1 \end{array}\right)$
        };

\end{tikzpicture}
\caption{Two perfect orientations of the same plabic graph in a disk and the corresponding boundary measurement matrices, representing the same element in the Grassmannian. The face weights of the underlying plabic graph are shown in grey.}\label{fig:grex}
    
\end{figure}
\end{example}

 \begin{figure}
 \centering
\begin{tikzpicture}[scale = 0.7]
\node () at (0,0) {
\begin{tikzpicture}[scale = 0.7]
\node [draw,circle,color=black, fill=white,inner sep=0pt,minimum size=3pt] (A) at (0,1) {};
\node [inner sep=0pt]  (B) at (1,1) {};
\node [draw,circle,color=black, fill=black,inner sep=0pt,minimum size=3pt] (C) at (0,2) {};
\node [inner sep=0pt]  (D) at (1,2) {};
\draw (-1, 1) -- (A)   node[midway, below] {};
\draw (B) -- (A) node[midway, below] {};
\draw (A) -- (C) node[midway, left] {};
\draw (C) -- (D) node[midway, above] {};
\draw (-1, 2) -- (C) node[midway, above] {};

\draw [dashed] (-1,2.75) -- (1,2.75);
\draw [dashed] (1,0.25) -- (1,2.75) node[pos = 0.3, right] {\footnotesize$1$} node[pos = 0.7, right] {\footnotesize$2$};;
\draw [ dashed] (-1,0.25) -- (1,0.25) ;
\draw [ dashed]  (-1,0.25) -- (-1,2.75)   node[pos = 0.3, left] {\footnotesize$1$} node[pos = 0.7, left] {\footnotesize$2$};;

\end{tikzpicture}};
\node () at (3,0) {$\longrightarrow$};
\node () at (6,0) {
\begin{tikzpicture}[scale = 0.7]
\node [draw,circle,color=black, fill=white,inner sep=0pt,minimum size=3pt] (A) at (0,1) {};
\node [inner sep=0pt]  (B) at (1,1) {};
\node [draw,circle,color=black, fill=black,inner sep=0pt,minimum size=3pt] (C) at (0,2) {};
\node [inner sep=0pt]  (D) at (1,2) {};
\draw (-1, 1) -- (A)   node[midway, below] {};
\draw (B) -- (A) node[midway, below] {};
\draw (A) -- (C) node[midway, left] {};
\draw (C) -- (D) node[midway, above] {};
\draw (-1, 2) -- (C) node[midway, above] {};

\draw [dashed] (-1,2.75) -- (1,2.75);
\draw [dashed] (1,0.25) -- (1,2.75) node[pos = 0.3, right] {\footnotesize$1$} node[pos = 0.7, right] {\footnotesize$2$};;
\draw [ dashed] (-1,0.25) -- (1,0.25) ;
\draw [ dashed]  (-1,0.25) -- (-1,2.75)   node[pos = 0.3, left] {\footnotesize$4$} node[pos = 0.7, left] {\footnotesize$3$};;

\end{tikzpicture}};
\end{tikzpicture}
\caption{Relabeling the sources, we go from the matrix-valued boundary measurement map to the Grassmannian-valued boundary measurement map.}\label{fig:relabeling}
\end{figure}

Now, return to the setting of a face-weighted $n \times n$ non-degenerate plabic graph in a rectangle, with left-to-right boundary measurement matrix $A$ and  right-to-left boundary measurement matrix $B$. To compute the associated Grassmannian boundary measurement map, we keep the labeling of sinks unchanged and relabel the sources  $(1, \dots, n) \mapsto (2n, \dots, n+1)$, see Figure \ref{fig:relabeling}. Then the left-to-right perfect orientation gives the $n \times 2n$ matrix $(D_n\lw A, \Id_n)$, where $\Id_n$ is the $n \times n$ identity matrix. Here multiplication by $\lw $ comes from reversal of the order of sources, while multiplication by $D_n$ takes care of signs $(-1)^{s(i,j)}$. Likewise, the right-to-left perfect orientation gives $(\Id_n, (-1)^{n-1}D_nB\lw )$. Since the Grassmannian elements defined by the two boundary measurement matrices must be the same, we have
$
(-1)^{n-1}D_nB\lw  D_n\lw A = \Id_n,
$
which, due to the identity  $\lw  D_n\lw  = (-1)^{n-1}D_n$, implies $B = D_nA^{-1}D_n$, as claimed.
\end{proof}



%

\paragraph{Proof of the main lemma.}

We now prove Lemma \ref{prop:sigma}. Let $\Gamma$ be a perfectly orientable move-symmetric plabic graph $\Gamma$. We first check that the vertical arrows in diagram \eqref{mainDiag} are well-defined, which is equivalent to saying that $\Gamma$ is non-degenerate. Let $\Gamma'$ be the graph obtained from $\Gamma$ as the result of square moves at all square faces which have two opposite vertices on the midline, and $\rho(\Gamma')$ be the reflection of $\Gamma'$ in the said line. By \cite[Lemma 12.2]{Pos}, $\Gamma'$ is perfectly orientable, and hence so is $\rho(\Gamma')$. At the same time, by definition of a move-symmetric graph we have $\rho(\Gamma')= \bar \Gamma$, so $\bar \Gamma$ is also perfectly orientable, which implies that $\Gamma$ is non-degenerate.  

Now, we show that diagram \eqref{mainDiag} commutes. To that end, we represent the involution $\sigma$ on $ \F(\Gamma)$ as the following composition of maps:
$$
\F(\Gamma)  \dashrightarrow \F(\Gamma') \xrightarrow{\mathcal Y \mapsto \rho_*(\mathcal Y)^{-1}} \F(\rho(\Gamma')) \xrightarrow{\mathcal Y \to \mathcal Y^{-1}}  \F(\xoverline[0.8]{ \rho(\Gamma')} = \Gamma).
$$
The leftmost map is given by square moves and hence does not alter the boundary measurement matrix. So, using Propositions \ref{prop:reflect} and \ref{prop:recolor} (the latter applies since non-degeneracy of $\Gamma$ implies non-degeneracy of $\rho(\Gamma') = \bar \Gamma$), we get
$$
\mes(\sigma(\mathcal Y)) =   D_n(\lw  \mes(\mathcal Y) \lw )^{-t}D_n = D_n \lw  \mes(\mathcal Y)^{-t} \lw  D_n = \Omega_n \mes(\mathcal Y)^{-t} \Omega_n^{-1}.
$$
This completes the proof of Lemma \ref{prop:sigma} and hence the first part of Theorem \ref{thm1}. \qed
%

\medskip

\section{Poisson property of the boundary measurement map}\label{sec3}

Let $\Gamma$ be a  perfectly orientable move-symmetric plabic graph, and $ \F^{ms}(\Gamma)$ be the space of move-symmetric weightings on $\Gamma$. 
In this section, we show that \emph{there exists} a multiplicative Poisson structure on $ \F^{ms}(\Gamma)$ whose pushforward by the boundary measurement map  $ \F^{ms}(\Gamma)\dashrightarrow G(\Omega_n) $  is the standard Poisson structure on $G(\Omega_n)$. 
In the next section, we will show that this Poisson structure on $ \F^{ms}(\Gamma)$ is log-canonical and describe the corresponding quiver. 

\paragraph{Poisson algebras, Poisson varieties, and Poisson-Lie groups.}\label{poissonbasics} The material discussed in this section is fairly standard and can be found, e.g., in \cite{laurent2012poisson}. A \emph{Poisson algebra} is a unital commutative associative algebra endowed with a Lie bracket $\{\,, \}$, called the \emph{Poisson bracket}, which is a derivation of the associative product: $\{ab,c\} = a\{b,c\} + \{a,c\}b$. A homomorphism of Poisson algebras is a map which is both an associative and Lie algebra homomorphism. An \emph{affine Poisson variety} is an affine variety whose coordinate ring carries a structure of a Poisson algebra. A regular map $f \colon X \to Y$ between affine Poisson varieties is Poisson if the corresponding map $f^\sharp \colon \C[Y] \to \C[X]$ between their coordinate rings is a homomorphism of Poisson algebras. Given a Poisson variety $X$, it is common to refer to the associated bracket as a Poisson bracket on $X$, even though it is actually an operation on $\C[X]$.


A Poisson algebra structure on a (unital commutative)  finitely generated associative algebra is determined by pairwise Poisson brackets of generators. Similarly, if $A$ is generated by $x_1, \dots, x_n$ and $x_1^{-1}, \dots, x_n^{-1}$, then it still suffices to define Poisson brackets of the form $\{x_i,x_j\}$, since $\{x^{-1}, y\} = -x^{-2}\{x, y\}$ for any unit $x$ and any $y \in A$.

In the case when $A$ is freely generated, i.e., when $A$ is the polynomial ring $\C[x_1, \dots, x_n]$, any skew-symmetric bracket on generators $x_i$  extends to a bracket on $A$ which has all properties of a Poisson bracket except, possibly, the Jacobi identity. To verify the Jacobi identity, it suffices to check it for generators. Furthermore, any Poisson bracket on the polynomial ring $A = \C[x_1, \dots, x_n]$ uniquely extends to the Laurent polynomial ring $\C[x_1^{\pm 1}, \dots, x_n^{\pm 1}]$, as well as any other localization of $A$.

\begin{example}\label{ex:torus}
Let $(q_{ij})$ be an $n \times n$ skew-symmetric matrix. Poisson brackets of the form
$
\{x_i, x_j\} = q_{ij}x_ix_j
$
are called \emph{log-canonical}. It is easy to see that any log-canonical bracket satisfies the Jacobi identity and hence gives a Poisson structure on the Laurent polynomial ring $\C[x_1^{\pm 1}, \dots, x_n^{\pm 1}]$. It can be thought of as a Poisson bracket on the algebraic torus $(\C^*)^n = \mathrm{Spec} \,\C[x_1^{\pm 1}, \dots, x_n^{\pm 1}]$.




%
\end{example}
The tensor product of Poisson algebras is naturally a Poisson algebra. The Poisson bracket on the tensor product is defined by $$\{a_1 \otimes b_1, a_2 \otimes b_2\} =  a_1a_2\otimes \{b_1,  b_2\}  + \{a_1,  a_2\}\otimes b_1b_2. $$ In particular, if $X$, $Y$ are Poisson varieties, then $X \times Y$ is also naturally a Poisson variety, since $\C[X \times Y] = \C[X] \otimes \C[Y]$. The corresponding Poisson structure on  $X \times Y$ is called the \emph{product Poisson structure}.


Let $G$ be a Lie group (to remain within the setting of affine varieties, we assume that $G$ is algebraic; however, the whole discussion easily extends to the setting of differentiable manifolds and arbitrary Lie groups). A Poisson structure on $G$ is called \emph{multiplicative} if the multiplication map $G \times G \to G$ is Poisson. Here $G \times G$ is endowed with the product Poisson structure. A Lie group with a multiplicative Poisson structure is called a \emph{Poisson-Lie group}. 

\begin{example}
The \emph{standard Poisson-Lie structure} (described below for any reductive group) on $\GL{2}$ is defined on matrix elements $a_{ij} \in \C[\GL{2}]$ by
\begin{gather*}
\{a_{11}, a_{12}\} = \frac{1}{2}a_{11}a_{12}, \quad \{a_{11}, a_{21}\} = \frac{1}{2}a_{11}a_{21}, \quad \{a_{11}, a_{22}\} = a_{12}a_{21}, \\  \{a_{12}, a_{21}\} = 0, \quad   \{a_{12}, a_{22}\}  = \frac{1}{2}a_{12}a_{22},   \quad \{a_{21}, a_{22}\}  = \frac{1}{2}a_{21}a_{22},
\end{gather*}
Multiplicativity of this bracket means that the co-multiplication map $$\Delta \colon  \C[\GL{2}] \to  \C[\GL{2}] \otimes  \C[\GL{2}], \quad \Delta(a_{ij}) = a_{i1} \otimes a_{1j} + a_{i2} \otimes a_{2j},
$$
 is a homomorphism of Poisson algebras. This can be verified by a direct calculation. 
%
\end{example}

\paragraph{Standard Poisson structure on a reductive Lie group.} Here we recall (see, e,g, \cite[Section 1.2]{hoffmann2000factorization} and \cite[Section 3.1]{reshetikhin2003integrability}) the definition of  the {standard} Poisson structure on an arbitrary complex reductive Lie group. Let $\g$ be a complex reductive Lie algebra, and $G$ be a Lie group with Lie algebra $\g$. To define the standard Poisson structure on $G$, one needs to fix a Cartan subalgebra $\mathfrak t \subset \g$, a system of positive roots, and an invariant inner product. Assuming that $[\g, \g]$ is simple, different choices lead to Poisson structures related by an inner automorphism and/or rescaling. 
Furthermore, for $G=\GL{n}$ (with a fixed defining representation), there is a canonical choice: the Cartan subalgebra is given by diagonal matrices, positive roots are those whose associated root vectors are upper-triangular matrices, and the inner product is given by the trace form $\tr(XY)$. Moreover, all the same choices work in types $B$, $C$, $D$ if we realize the corresponding groups by operators preserving a given symmetric or skew-symmetric \emph{anti-diagonal} bilinear form $\Omega$. All groups considered in this paper are defined in this way and hence carry a canonical Poisson structure.

Upon fixing the above data, consider the corresponding nilpotent subalgebras $\mathfrak n_\pm$. Any $\xi \in \g$ can be written as $\xi = \xi_+ + \xi_0 + \xi_-$, where $\xi_\pm \in \mathfrak n_\pm$, $\xi_0 \in \mathfrak t$ (recall that $\mathfrak t$ denotes the Cartan subalgebra). 
The operator $r \colon \g \to \g$ defined by $r(\xi) :=\xi_+ - \xi_-$ is called the \emph{standard $r$-matrix}. The \emph{standard Poisson bracket} on $G$ is given by
\begin{equation*}
\{f_1, f_2\} (g) := \frac{1}{2}\left(\langle r(g^{-1} \grad f_1(g)),g^{-1}\grad f_2(g) \rangle - \langle r( \grad f_1(g)g^{-1}),\grad f_2(g)g^{-1} \rangle\right)
\end{equation*}
for any regular functions $f_1, f_2 \in \C[G]$ on $G$ and any $g \in G$. Here $\langle \,, \rangle$ is the invariant inner product on $\g$, the gradients $ \grad f_i(g) \in T_gG$ are defined using the bi-invariant metric on $G$ determined by that inner product, while $g^{-1} \grad f_i(g) $ and $ \grad f_i(g)g^{-1} $ are the gradients translated to the identity using left and right shifts respectively. 
The so-defined  Poisson bracket is multiplicative, thus turning $G$ into a {Poisson-Lie group}. 



\paragraph{Induced Poisson structure on the fixed locus of a Poisson involution.}

We now recall (see, e.g., \cite{fernandes2001hyperelliptic, xu2003dirac}) the construction of the induced Poisson structure on the fixed point set of a Poisson involution.  In what follows, we will use that construction to deduce the Poisson property of the boundary measurement map for $B$ and $C$ types from the corresponding result in type $A$. 



Let $X$ be an affine Poisson variety, $\tau \colon X \to X$ be a biregular involution, and $Y := \mathrm{Fix}(\tau) = \{x \in X \mid \tau(x) = x\}$ be the fixed point set of $\tau$. Then, generally speaking, $Y$ is not a Poisson subvariety of $X$, which, in the language of functions, means that the set {{}$\I(Y) \subset\C[X]$ of regular functions on $X$ vanishing on $Y$ is not a Poisson ideal. However, this does hold for $\tau$-invariant functions, namely $\I(Y)^\tau := \{ f \in \I(Y) \mid \tau^*f = f\}$ is a Poisson ideal in $\C[X]^\tau = \{ f \in\C[X] \mid \tau^*f = f\}$, see  \cite[Proof of Proposition 3.4]{fernandes2001hyperelliptic}. Identifying the coordinate ring $\C[Y]$ with the quotient  $\C[X]^\tau / \,\I(Y)^\tau$, one gets the \emph{induced} Poisson structure on $Y$. This construction is natural in the sense that going from $(X, \tau)$ to the fixed point set of $\tau$ with its induced Poisson structure is a functor from the category of affine Poisson varieties with a Poisson involution to the category of affine Poisson varieties. In other words, we have the following.

\begin{proposition}\label{prop:functor}
Suppose we have a commutative diagram of affine Poisson varieties and regular Poisson maps
\begin{equation*}
\begin{tikzcd}
X_1 \arrow[d,"\psi", swap] \arrow[r,"\tau_1"]
&X_1 \arrow[d, "\psi"]  \\ 
X_2  \arrow[r,"\tau_2"] &
X_2,
\end{tikzcd}
\end{equation*}
where horizontal arrows are involutions. For $i  \in \{1,2\}$, let $Y_i = \mathrm{Fix}(\tau_i)\subset X_i$ be the fixed locus of $\tau_i$, endowed with the induced Poisson structure. Then the map $\psi\vert_{Y_1} \colon Y_1 \to Y_2$ is Poisson.
\end{proposition}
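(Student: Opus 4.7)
The plan is to translate the statement to the level of function algebras and carry out a diagram chase. Since $\psi$ is Poisson, its pullback $\psi^* \colon \O(X_2) \to \O(X_1)$ is a homomorphism of Poisson algebras, and the commutativity of the given diagram dualizes to the identity $\tau_1^* \circ \psi^* = \psi^* \circ \tau_2^*$ on $\O(X_2)$. The whole proof will be a consequence of these two facts together with the construction of the induced Poisson structure recalled just before the statement.

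First, I would check that $\psi$ actually restricts to a regular map $Y_1 \to Y_2$: for any $y_1 \in Y_1$ one has $\tau_2(\psi(y_1)) = \psi(\tau_1(y_1)) = \psi(y_1)$, so $\psi(y_1) \in Y_2$. Next, from $\tau_1^* \circ \psi^* = \psi^* \circ \tau_2^*$ I would conclude that $\psi^*$ sends $\O(X_2)^{\tau_2}$ into $\O(X_1)^{\tau_1}$; and from $\psi(Y_1) \subset Y_2$ I would conclude that $\psi^*$ sends $\I(Y_2)$ into $\I(Y_1)$, hence $\I(Y_2)^{\tau_2}$ into $\I(Y_1)^{\tau_1}$.

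Consequently $\psi^*$ descends to a homomorphism of quotient algebras
\begin{equation*}
\O(X_2)^{\tau_2}/\,\I(Y_2)^{\tau_2} \longrightarrow \O(X_1)^{\tau_1}/\,\I(Y_1)^{\tau_1}.
\end{equation*}
Under the identifications $\O(Y_i) \cong \O(X_i)^{\tau_i}/\,\I(Y_i)^{\tau_i}$ used to define the induced Poisson structures, this descended homomorphism is precisely $(\psi|_{Y_1})^* \colon \O(Y_2) \to \O(Y_1)$. Since $\psi^*$ is a Poisson map on the ambient algebras, its restriction to $\tau$-invariants is still Poisson, and because the brackets on the quotients are by definition those inherited from the invariant subalgebras (with $\I(Y_i)^{\tau_i}$ being Poisson ideals there), the quotient map is automatically a Poisson algebra homomorphism. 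This is equivalent to $\psi|_{Y_1} \colon Y_1 \to Y_2$ being Poisson.

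The argument is essentially formal, so there is no real obstacle once the induced bracket on $Y_i$ has been set up; the only points that need care are the verifications that $\psi(Y_1) \subset Y_2$ and that $\psi^*$ respects both the $\tau$-invariance and the ideals $\I(Y_i)$ simultaneously, so that the descent to the quotients is well defined.
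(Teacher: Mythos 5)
Your proof is correct and follows exactly the same route as the paper's: the paper's one-line argument is that $\psi^*$ being a Poisson algebra homomorphism induces one on the quotients $\O(X_i)^{\tau_i}/\,\I(Y_i)^{\tau_i}$, and your write-up simply spells out the intermediate verifications (that $\psi(Y_1)\subset Y_2$ and that $\psi^*$ respects invariants and ideals) that the paper leaves implicit.
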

\begin{proof}
Since $\psi^* \colon\C[X_2] \to\C[X_1]$ is a homomorphism of Poisson algebras, so is the induced map $\C[X_2]^{\tau_2} / \,\I(Y_2)^{\tau_2} \to\C[X_1]^{\tau_1} /\, \I(Y_1)^{\tau_1}$. 
\end{proof}

\paragraph{Standard Poisson structures in types $B$, $C$, $D$ from type $A$.} \begin{proposition}\label{PLsubgroup} Suppose $G$ is a Poisson-Lie group equipped with an involutive Poisson automorphism $\tau$. Then the fixed point subgroup $ \{ g \in G \mid \tau(g) = g\}$ with the induced Poisson structure is also a Poisson-Lie group.\end{proposition}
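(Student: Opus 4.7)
The plan is to deduce this from Proposition \ref{prop:functor} applied to the multiplication map. First I would note that the fixed point subgroup $H := \{g \in G \mid \tau(g) = g\}$ is a Lie subgroup of $G$, since $\tau$ is an involutive group automorphism (the fixed-point set of a finite-order smooth/algebraic group automorphism is always a closed Lie subgroup). By assumption $H$ carries the Poisson structure induced as the fixed locus of the Poisson involution $\tau$, in the sense described just before Proposition \ref{prop:functor}.

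Next I would consider the commutative diagram
\begin{equation*}
\begin{tikzcd}
G \times G \arrow[d,"\tau \times \tau", swap] \arrow[r,"m"] & G \arrow[d,"\tau"]  \\
G \times G \arrow[r,"m"] & G,
\end{tikzcd}
\end{equation*}
where $m$ is multiplication. The diagram commutes because $\tau$ is a group homomorphism, i.e. $\tau(g_1 g_2) = \tau(g_1)\tau(g_2)$. The horizontal maps are Poisson because $G$ is a Poisson-Lie group, and the vertical involutions are Poisson: $\tau$ by hypothesis, and $\tau \times \tau$ because a product of Poisson maps between products of Poisson varieties is Poisson. Therefore Proposition \ref{prop:functor} applies, and $m$ restricts to a Poisson map from the fixed locus of $\tau \times \tau$ in $G \times G$ to the fixed locus of $\tau$ in $G$, with their respective induced Poisson structures.

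The one step that requires a brief verification is that the induced Poisson structure on the fixed locus $(G \times G)^{\tau \times \tau} = H \times H$ coincides with the product of the induced Poisson structure on $H$ with itself. This follows directly from the definition: functions on $H \times H$ are generated by pullbacks of $\tau$-invariant functions from the two factors, and for such elementary tensors $f_1 \otimes f_2$ and $g_1 \otimes g_2$ the Poisson bracket on $G \times G$ reduces via the Leibniz rule to the expected product formula, which after restriction gives exactly the product induced bracket on $H \times H$.

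Combining these observations, the multiplication $H \times H \to H$ is Poisson with respect to the induced structure on $H$, so $H$ is a Poisson-Lie group. I do not expect any serious obstacle here: the argument is essentially bookkeeping around the functoriality statement already established in Proposition \ref{prop:functor}, together with the elementary fact that the induced Poisson construction commutes with products. The only point one has to be mildly careful about is transferring the affine-variety version of Proposition \ref{prop:functor} to the smooth/analytic setting of Lie groups, but the author has already noted this extension works without change.
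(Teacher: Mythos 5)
Your proof is correct and follows the same route as the paper, which simply applies Proposition \ref{prop:functor} to the multiplication map $G \times G \to G$. The extra detail you supply --- that the induced structure on the fixed locus of $\tau \times \tau$ is the product of the induced structures, so that the target of the functoriality statement really is $H \times H$ with its product Poisson structure --- is a point the paper leaves implicit, and your verification of it is fine.
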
\begin{proof}
Consider the commutative diagram
\begin{equation*}
\begin{tikzcd}[column sep=2.9cm]
G \times G \arrow[d,"(g_1{,}\, g_2)\, \mapsto\, g_1g_2", swap] \arrow[r,"(g_1{,}\, g_2)\, \mapsto \,(\tau(g_1){,}\, \tau(g_2))"]
&G \times G \arrow[d, "(g_1{,}\, g_2)\, \mapsto\, g_1g_2"]  \\ 
G   \arrow[r,"\tau"] &
G
\end{tikzcd}
\end{equation*}
and apply Proposition \ref{prop:functor}.
\end{proof}
 This result makes it possible to recover standard Poisson structures  in types $B$, $C$, $D$ from the one in type~$A$.

\begin{proposition}\label{prop:induce}
Let $\Omega$ be a non-degenerate symmetric or skew-symmetric anti-diagonal matrix, and let $\tau \colon \GL{n} \to \GL{n}$ be given by $\tau(A) := \Omega A^{-t} \Omega^{-1}$. Then $\tau$ preserves the standard Poisson structure on $\GL{n}$, and the induced Poisson structure on the fixed point set $G(\Omega)$ of $\tau$ coincides with the standard Poisson structure on  $G(\Omega)$.
\end{proposition}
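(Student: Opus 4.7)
The plan is to split the statement into its two assertions and reduce both to two properties of the Lie-algebra involution $\theta := d\tau\vert_I$ given by $\theta(\xi) = -\Omega \xi^t \Omega^{-1}$: namely, that $\theta$ commutes with the standard $r$-matrix $r(\xi) = \xi_+ - \xi_-$, and that $\theta$ is self-adjoint with respect to the trace form $\langle X,Y\rangle := \tr(XY)$.

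The first step is to verify these two properties. Since $\Omega$ is anti-diagonal with $\pm 1$ entries, conjugation by $\Omega$ sends the matrix unit $E_{ij}$ to a signed multiple of $E_{n+1-j,\,n+1-i}$, and composing with $\xi \mapsto -\xi^t$ therefore preserves each of $\mathfrak n_-$, $\mathfrak t$, $\mathfrak n_+$; in particular $\theta r = r\theta$. For self-adjointness, the identity $\Omega^{-1} = \Omega^t = (-1)^{n-1}\Omega$ combined with $\tr(X^t Z) = \tr(XZ^t)$ yields $\langle \theta X, Y\rangle = \langle X, \theta Y\rangle$ after a short computation. As a corollary, $\g^\theta = \g(\Omega)$ and $\g^{-\theta}$ are orthogonal, and $r$ preserves $\g^\theta$.

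For the first claim (that $\tau$ is Poisson) the next step is to compute the differential of $\tau$ on left- and right-invariant vector fields. From $(ge^{t\xi})^{-t} = g^{-t}e^{-t\xi^t}$ and pushing $\Omega^{\pm 1}$ through one obtains $\tau(ge^{t\xi}) = \tau(g)e^{t\theta(\xi)}$ and $\tau(e^{t\xi}g) = e^{t\theta(\xi)}\tau(g)$; the crucial feature -- and the reason $\tau$ turns out Poisson rather than anti-Poisson -- is that $d\tau_g$ sends left-invariants to left-invariants and right-invariants to right-invariants, both via $\theta$. Together with self-adjointness of $\theta$, this produces $g^{-1}\grad(f\circ\tau)(g) = \theta(\tau(g)^{-1}\grad f(\tau(g)))$ and the analogous right-translated identity. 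Plugging these into the $r$-matrix formula for the standard bracket, the two facts $r\theta = \theta r$ and $\langle\theta X,\theta Y\rangle = \langle X,Y\rangle$ collapse the result to $\{f_1,f_2\}(\tau(g))$, proving that $\tau$ is a Poisson automorphism.

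For the second claim, Proposition \ref{PLsubgroup} already guarantees that the induced structure on $G(\Omega)$ is multiplicative, so it remains to match it with the intrinsic standard bracket. The plan is to take $f_1,f_2 \in \mathcal O(G(\Omega))$, extend to $\tau$-invariant $\tilde f_1,\tilde f_2$ on $\GL{n}$, and evaluate $\{\tilde f_1,\tilde f_2\}_{\GL{n}}$ at $g_0 \in G(\Omega)$. The $\tau$-invariance of $\tilde f_i$ forces $d\tilde f_i(g_0)$ to annihilate the $(-1)$-eigenspace of $d\tau_{g_0}$; by the orthogonality of $\g^{\pm\theta}$, the left and right translates of $\grad\tilde f_i(g_0)$ to the identity therefore lie in $\g(\Omega)$, and a further metric check identifies them with the intrinsic gradients of $f_i$ on $G(\Omega)$ computed with $\langle,\rangle\vert_{\g(\Omega)}$. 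Since $r$ preserves $\g(\Omega)$ and its restriction is precisely the standard $r$-matrix with respect to the Cartan $\mathfrak t \cap \g(\Omega)$ and the Borels $\mathfrak n_\pm \cap \g(\Omega)$, the $r$-matrix formula on $\GL{n}$ at $g_0$ reduces term by term to the standard formula on $G(\Omega)$. The main technical obstacle will be verifying cleanly that $\mathfrak t\cap\g(\Omega)$ and $\mathfrak n_\pm\cap\g(\Omega)$ really are a Cartan and a pair of opposite Borels in $\g(\Omega)$, matching the ``canonical choice'' the paper makes in types $B$, $C$, $D$; once that identification is in hand, the $r$-matrix comparison is routine.
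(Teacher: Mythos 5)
Your proposal is correct and follows essentially the same route as the paper: show that $d\tau(\Id)$ commutes with the standard $r$-matrix and is an isometry of the trace form to get the Poisson property, then use the orthogonal splitting of $T_A\GL{n}$ into $(\pm 1)$-eigenspaces of $d\tau(A)$ to identify gradients of $\tau$-invariant extensions with intrinsic gradients on $G(\Omega)$, and conclude via the restriction of $r$ to $\g(\Omega)$. The only difference is cosmetic: where you verify the Poisson property of $\tau$ by a direct computation with left- and right-translated gradients, the paper invokes a cited general fact that any automorphism preserving the $r$-matrix bivector preserves the associated Sklyanin bracket; likewise, the compatibility of $\mathfrak t\cap\g(\Omega)$ and $\mathfrak n_\pm\cap\g(\Omega)$ with the standard choices in types $B$, $C$, $D$, which you flag as the remaining technical point, is simply asserted in the paper's setup rather than proved.
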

\begin{proof}
The differential $d\tau(\Id) \colon  \GL{n} \to \GL{n}$ of $\tau$ at the identity reads $A \mapsto -\Omega A^{t} \Omega^{-1}$. Since transposition switches upper- and lower-triangular matrices, while conjugation by $\Omega$ switches them back, this map commutes with the $r$-matrix. Furthermore, since the differential of $\tau$ preserves the trace form, it also preserves the bivector $r^{\sharp}$ obtained from the operator $r$ by means of identification $\gl_{n} \simeq \gl_{n}^*$ via the trace form. Any automorphism preserving $r^{\sharp}$ preserves the associated Poisson bracket \cite[Proposition 2.12]{izosimov2022lattice}, so $\tau$ is indeed a Poisson map.


Further, observe that, being an automorphism, $\tau$ preserves the bi-invariant metric on $\GL{n}$. So, for any fixed point $A \in G(\Omega)$ of $\tau$ we have an orthogonal vector space decomposition $T_A \GL{n} = T_AG(\Omega) \oplus V_A$, where $V_A$ is the $(-1)$-eigenspace  of $d\tau(A)$. 
Take any $\tau$-invariant function $f \in \C[\GL{n}]^\tau$. Then the differential $df(A) \in T^*_A \GL{n}$ belongs to the $(+1)$-eigenspace  of $(d\tau(A))^*$ and hence vanishes on the subspace $V_A \subset T_A\GL{n}$. But that means that the gradient $\grad f(A)$ is orthogonal to $V_A$ and hence tangent to $G(\Omega)$. Therefore, that gradient is equal to the gradient at $A$ of the restriction  $f\vert_{G(\Omega)}$. So, the standard bracket of two $\tau$-invariant functions $f_1, f_2 \in \C[\GL{n}]^\tau$ computed at a point $A \in G(\Omega)$ is equal to the standard bracket of their restrictions to $G(\Omega)$, computed at the same point (here we also use that the restriction of the standard $r$-matrix from $\gl_{n}$ to the Lie algebra $\g(\Omega) $ of $G(\Omega)$ gives the standard $r$-matrix on the latter, and the same holds for the invariant inner product). But that precisely means that the standard bracket on $\GL{n}$ induces the standard bracket on $G(\Omega)$, q.e.d.
\end{proof}

\paragraph{Construction of the Poisson bracket on move-symmetric weightings.}
  \begin{figure}[t]
 \centering
\begin{tikzpicture}[scale = 0.7]
\node () at (0,0){
\begin{tikzpicture}[scale = 0.5]
\node [draw,circle,color=black, fill=black,inner sep=0pt,minimum size=3pt, opacity = 0.3] (A) at (0.3,0) {};
\node [draw,circle,color=black, fill=white,inner sep=0pt,minimum size=3pt, opacity = 0.3] (B) at (1,0.7) {};
\node [draw,circle,color=black, fill=black,inner sep=0pt,minimum size=3pt, opacity = 0.3] (C) at (1.7,0) {};
\node [draw,circle,color=black, fill=white,inner sep=0pt,minimum size=3pt, opacity = 0.3] (D) at (1,-0.7) {};
\node [draw,circle,color=black, fill=white,inner sep=0pt,minimum size=3pt, opacity = 0.3] (E) at (1,2) {};
\node [draw,circle,color=black, fill=black,inner sep=0pt,minimum size=3pt, opacity = 0.3] (F) at (1,-2) {};
 \node [draw,circle,color=black, fill=black,inner sep=0pt,minimum size=3pt]  (1) at (1,0){};
  \node [draw,circle,color=black, fill=black,inner sep=0pt,minimum size=3pt]  (1') at (4,0){};
 \node [draw,circle,color=black, fill=black,inner sep=0pt,minimum size=3pt]  (Y2) at (-0.5,1.25){};
  \node [draw,circle,color=black, fill=black,inner sep=0pt,minimum size=3pt]  (2Y2) at (-0.5,-1.25){};
   \node [draw,circle,color=black, fill=black,inner sep=0pt,minimum size=3pt]  (Y3) at (2.5,1.25){};
    \node [draw,circle,color=black, fill=black,inner sep=0pt,minimum size=3pt]  (12Y3) at (2.5,-1.25){};
       \node [draw,circle,color=black, fill=black,inner sep=0pt,minimum size=3pt]  (Y4) at (5.5,1.25){};
    \node [draw,circle,color=black, fill=black,inner sep=0pt,minimum size=3pt]  (12Y4) at (5.5,-1.25){};
     \node [draw,circle,color=black, fill=black,inner sep=0pt,minimum size=3pt]  (Y1) at (2.5,2.75){};
        \node [draw,circle,color=black, fill=black,inner sep=0pt,minimum size=3pt]  (Y1') at (2.5,-2.75){};
        \draw [dotted, ->-, thick] (Y1) to [bend right]  (Y2);
                \draw [->, thick] (Y3) -- (Y1);
                     \draw [dotted, ->, thick] (2Y2)  to [bend left = 15] (Y2);
                             \draw [dotted, ->-,thick ] (Y1') to [bend left]  (2Y2);
                                   \draw [->, thick] (12Y3) -- (Y1');
                                           \draw [dotted, ->-, thick] (Y1) to [bend left]  (Y4);
                     \draw [dotted, ->,thick ] (Y4)  to [bend left = 15]  (12Y4);
                             \draw [dotted, ->-, thick] (Y1') to [bend right]  (12Y4);
                                                     \draw [ ->-,thick ] (1) -- (Y3);
                                                        \draw [ ->-,thick ] (1) -- (2Y2);
                                                           \draw [ ->-, thick] (Y2) -- (1);
                                                                          \draw [ ->-, thick] (12Y3) -- (1);
                                                                                                                               \draw [ ->-,thick ] (1') -- (Y4);
                                                        \draw [ ->-, thick] (1') -- (12Y3);
                                                           \draw [ ->-, thick] (Y3) -- (1');
                                                                          \draw [ ->-, thick] (12Y4) -- (1');
                                                                                  \draw [->, thick] (Y4)-- (Y3);
                                                                                             \draw [->, thick] (2Y2) --  (12Y3);
\draw [opacity = 0.3] (A) -- (B) -- (C) -- (D) -- (A);
\draw  [opacity = 0.3](A) -- +(-1.8,0);
\draw  [opacity = 0.3](B) -- (E);
\draw  [opacity = 0.3](E) -- +(2.5,0);
\draw  [opacity = 0.3](E) -- +(-2.5,0);
\draw [opacity = 0.3] (C) -- +(1,0);
\draw  [opacity = 0.3](D) -- (F);
\draw [opacity = 0.3] (F) -- +(2.5,0);
\draw  [opacity = 0.3](F) -- +(-2.5,0);
\def\n {-2}
\node [draw,circle,color=black, fill=black,inner sep=0pt,minimum size=3pt, opacity = 0.3] (A) at (5.3+\n,0) {};
\node [draw,circle,color=black, fill=white,inner sep=0pt,minimum size=3pt, opacity = 0.3] (B) at (6 + \n,0.7) {};
\node [draw,circle,color=black, fill=black,inner sep=0pt,minimum size=3pt, opacity = 0.3] (C) at (6.7 + \n,0) {};
\node [draw,circle,color=black, fill=white,inner sep=0pt,minimum size=3pt, opacity = 0.3] (D) at (6 +\n,-0.7) {};
\node  [draw,circle,color=black, fill=black,inner sep=0pt,minimum size=3pt, opacity = 0.3] (E) at (6+\n,2) {};
\node  [draw,circle,color=black, fill=white,inner sep=0pt,minimum size=3pt, opacity = 0.3] (F) at (6+\n,-2) {};
\draw [opacity = 0.3] (A) -- (B) -- (C) -- (D) -- (A);
\draw  [opacity = 0.3](A) -- +(-1,0);
\draw  [opacity = 0.3](B) -- (E);
\draw  [opacity = 0.3](E) -- +(2.5,0);
\draw  [opacity = 0.3](E) -- +(-2.5,0);
\draw [opacity = 0.3] (C) -- +(1.8,0);
\draw  [opacity = 0.3](D) -- (F);
\draw [opacity = 0.3] (F) -- +(2.5,0);
\draw  [opacity = 0.3](F) -- +(-2.5,0);
\draw [dashed] (-1.5,-3.5) -- (-1.5,3.5) -- (6.5,3.5) -- (6.5,-3.5) -- cycle;
\end{tikzpicture}};

\node () at (9,0) {

\begin{tikzpicture}[xscale = 0.5, yscale = -0.5]
\node [draw,circle,color=black, fill=white,inner sep=0pt,minimum size=3pt, opacity = 0.3] (A) at (0.3,0) {};
\node [draw,circle,color=black, fill=black,inner sep=0pt,minimum size=3pt, opacity = 0.3] (B) at (1,0.7) {};
\node [draw,circle,color=black, fill=white,inner sep=0pt,minimum size=3pt, opacity = 0.3] (C) at (1.7,0) {};
\node [draw,circle,color=black, fill=black,inner sep=0pt,minimum size=3pt, opacity = 0.3] (D) at (1,-0.7) {};
\node [draw,circle,color=black, fill=black,inner sep=0pt,minimum size=3pt, opacity = 0.3] (E) at (1,2) {};
\node [draw,circle,color=black, fill=white,inner sep=0pt,minimum size=3pt, opacity = 0.3] (F) at (1,-2) {};
 \node [draw,circle,color=black, fill=black,inner sep=0pt,minimum size=3pt]  (1) at (1,0){};
  \node [draw,circle,color=black, fill=black,inner sep=0pt,minimum size=3pt]  (1') at (4,0){};
 \node [draw,circle,color=black, fill=black,inner sep=0pt,minimum size=3pt]  (Y2) at (-0.5,1.25){};
  \node [draw,circle,color=black, fill=black,inner sep=0pt,minimum size=3pt]  (2Y2) at (-0.5,-1.25){};
   \node [draw,circle,color=black, fill=black,inner sep=0pt,minimum size=3pt]  (Y3) at (2.5,1.25){};
    \node [draw,circle,color=black, fill=black,inner sep=0pt,minimum size=3pt]  (12Y3) at (2.5,-1.25){};
       \node [draw,circle,color=black, fill=black,inner sep=0pt,minimum size=3pt]  (Y4) at (5.5,1.25){};
    \node [draw,circle,color=black, fill=black,inner sep=0pt,minimum size=3pt]  (12Y4) at (5.5,-1.25){};
     \node [draw,circle,color=black, fill=black,inner sep=0pt,minimum size=3pt]  (Y1) at (2.5,2.75){};
        \node [draw,circle,color=black, fill=black,inner sep=0pt,minimum size=3pt]  (Y1') at (2.5,-2.75){};
        \draw [dotted, ->-, thick] (Y1) to [bend right]  (Y2);
                \draw [->, thick] (Y3) -- (Y1);
                     \draw [dotted, ->, thick] (2Y2)  to [bend left = 15] (Y2);
                             \draw [dotted, ->-,thick ] (Y1') to [bend left]  (2Y2);
                                   \draw [->, thick] (12Y3) -- (Y1');
                                           \draw [dotted, ->-, thick] (Y1) to [bend left]  (Y4);
                     \draw [dotted, ->,thick ] (Y4)  to [bend left = 15]  (12Y4);
                             \draw [dotted, ->-, thick] (Y1') to [bend right]  (12Y4);
                                                     \draw [ ->-,thick ] (1) -- (Y3);
                                                        \draw [ ->-,thick ] (1) -- (2Y2);
                                                           \draw [ ->-, thick] (Y2) -- (1);
                                                                          \draw [ ->-, thick] (12Y3) -- (1);
                                                                                                                               \draw [ ->-,thick ] (1') -- (Y4);
                                                        \draw [ ->-, thick] (1') -- (12Y3);
                                                           \draw [ ->-, thick] (Y3) -- (1');
                                                                          \draw [ ->-, thick] (12Y4) -- (1');
                                                                                  \draw [->, thick] (Y4)-- (Y3);
                                                                                             \draw [->, thick] (2Y2) --  (12Y3);
\draw [opacity = 0.3] (A) -- (B) -- (C) -- (D) -- (A);
\draw  [opacity = 0.3](A) -- +(-1.8,0);
\draw  [opacity = 0.3](B) -- (E);
\draw  [opacity = 0.3](E) -- +(2.5,0);
\draw  [opacity = 0.3](E) -- +(-2.5,0);
\draw [opacity = 0.3] (C) -- +(1,0);
\draw  [opacity = 0.3](D) -- (F);
\draw [opacity = 0.3] (F) -- +(2.5,0);
\draw  [opacity = 0.3](F) -- +(-2.5,0);
\def\n {-2}
\node [draw,circle,color=black, fill=white,inner sep=0pt,minimum size=3pt, opacity = 0.3] (A) at (5.3+\n,0) {};
\node [draw,circle,color=black, fill=black,inner sep=0pt,minimum size=3pt, opacity = 0.3] (B) at (6 + \n,0.7) {};
\node [draw,circle,color=black, fill=white,inner sep=0pt,minimum size=3pt, opacity = 0.3] (C) at (6.7 + \n,0) {};
\node [draw,circle,color=black, fill=black,inner sep=0pt,minimum size=3pt, opacity = 0.3] (D) at (6 +\n,-0.7) {};
\node  [draw,circle,color=black, fill=white,inner sep=0pt,minimum size=3pt, opacity = 0.3] (E) at (6+\n,2) {};
\node  [draw,circle,color=black, fill=black,inner sep=0pt,minimum size=3pt, opacity = 0.3] (F) at (6+\n,-2) {};
\draw [opacity = 0.3] (A) -- (B) -- (C) -- (D) -- (A);
\draw  [opacity = 0.3](A) -- +(-1,0);
\draw  [opacity = 0.3](B) -- (E);
\draw  [opacity = 0.3](E) -- +(2.5,0);
\draw  [opacity = 0.3](E) -- +(-2.5,0);
\draw [opacity = 0.3] (C) -- +(1.8,0);
\draw  [opacity = 0.3](D) -- (F);
\draw [opacity = 0.3] (F) -- +(2.5,0);
\draw  [opacity = 0.3](F) -- +(-2.5,0);
\draw [dashed] (-1.5,-3.5) -- (-1.5,3.5) -- (6.5,3.5) -- (6.5,-3.5) -- cycle;
\end{tikzpicture}};

\end{tikzpicture}
\caption{ If two plabic graphs are related by reflection followed by vertex recoloring, then their dual quivers are reflections of each other.}\label{fig:mirror}
    
\end{figure}
Let $\Gamma$ be an $n\times n$  move-symmetric plabic graph, and $ \F^{ms}(\Gamma)$ be the space of move-symmetric weightings on $\Gamma$. We aim to equip the latter space with a multiplicative Poisson structure such that the boundary measurement map   $ \F^{ms}(\Gamma)\dashrightarrow G(\Omega_n) $ is Poisson. To that end, recall that $ \F^{ms}(\Gamma)$ is the fixed point set of a birational involution $\sigma \colon \F_{}(\Gamma) \dashedrightarrow \F_{}(\Gamma)$ defined in Section \ref{sec2}. The involution $\sigma$ can be represented as composition of square moves turning the graph $\Gamma$ into the graph $\Gamma'$ and reflection which maps $\Gamma'$ back onto $\Gamma$ but with opposite color vertices. Since the reflection map reverses both the orientation and vertex colors, it maps the dual quiver $\Q'$ of $\Gamma'$ isomorphically onto the dual quiver $\Q$ of $\Gamma$, see Figure \ref{fig:mirror}.  
 So, since the Poisson structure on the face weight space is determined by the quiver, the induced map on face weights is Poisson. Given that square moves are Poisson as well, it follows that $\sigma$ is a birational Poisson involution. Moreover, $\sigma$ is biregular when viewed as a map $  \F_0(\Gamma) \to \F_0(\Gamma) $, where $\F_{0}(\Gamma) \subset  \F_{}(\Gamma)$ is an open subvariety defined by the condition that the faces where we perform square moves have weights not equal to $-1$. So, the fixed point set $ \F^{ms}(\Gamma) \subset \F_0(\Gamma)$ of $\sigma$ carries an induced Poisson structure. 
 
 Now, we show that the so-constructed Poisson structure on $ \F^{ms}(\Gamma)$ is multiplicative. Let $\Gamma_1$, $\Gamma_2$ be $n \times n$ move-symmetric plabic graphs, and let $\Gamma_1 \times \Gamma_2$ be their concatenation. Then we have a Poisson map $ \F_{}(\Gamma_1) \times  \F_{}(\Gamma_2) \to  \F_{}(\Gamma_1 \times \Gamma_2)$ which sends weightings $\mathcal Y_1 \in \F_{}(\Gamma_1), \mathcal Y_2 \in \F_{}(\Gamma_2) $ to the weighting $\mathcal Y_1\mathcal Y_2 \in \F_{}(\Gamma_1 \times \Gamma_2)$ obtained by concatenating weighted graphs $(\Gamma_1, \mathcal Y_1 )$, $(\Gamma_2, \mathcal Y_2 )$. As before, for a move-symmetric graph $\Gamma$, denote by $\F_0(\Gamma)$ the set of weightings such that the weights of faces with two vertices on the midline are not equal to $-1$. Then we have the following commutative diagram
 \begin{equation*}
\begin{tikzcd}[column sep=3cm]
 \F_{0}(\Gamma_1) \times  \F_{0}(\Gamma_2)  \arrow[d,"(\mathcal Y_1{,}\, \mathcal Y_2)\, \mapsto\, \mathcal Y_1\mathcal Y_2", swap] \arrow[r,"(\mathcal Y_1{,}\, \mathcal Y_2)\, \mapsto \,(\sigma(\mathcal Y_1){,}\, \sigma(\mathcal Y_2))"]
& \F_{0}(\Gamma_1) \times  \F_{0}(\Gamma_2)\arrow[d, "(\mathcal Y_1{,}\, \mathcal Y_2)\, \mapsto\, \mathcal Y_1\mathcal Y_2"]  \\ 
\F_{0}(\Gamma_1 \times \Gamma_2)   \arrow[r,"\sigma"] &
\F_{0}(\Gamma_1 \times \Gamma_2)  .
\end{tikzcd}
\end{equation*}
So, by Proposition \ref{prop:functor}, the concatenation map $\F^{ms}(\Gamma_1) \times  \F^{ms}(\Gamma_2) \to  \F^{ms}(\Gamma_1 \times \Gamma_2)$ is also Poisson, as needed (cf. the proof of Proposition \ref{PLsubgroup}).

Finally, let us show that, for a move-symmetric  perfectly orientable $n\times n$ plabic graph $\Gamma$, the boundary measurement map $ \F^{ms}(\Gamma) \dashrightarrow G(\Omega_n)$ is Poisson (away from its indeterminacy locus). Let $\F_{00}(\Gamma) \subset \F_0(\Gamma)$ be the subset obtained by removing the  indeterminacy locus of the boundary measurement map. Then, from \eqref{mainDiag}, we get the following commutative diagram:
\begin{equation*}
\begin{tikzcd}
\F_{00}(\Gamma) \arrow[d,"\mes", swap] \arrow[r,"\sigma"]
&\F_{00}( \Gamma) \arrow[d, "\mes"]  \\ 
\GL{n}  \arrow[r,"\tau"] &
\GL{n}.
\end{tikzcd}
\end{equation*}
Applying Proposition \ref{prop:functor}, we get that the restriction of the boundary measurement map $\mes$ to the fixed point set of $\sigma$ is a Poisson map to the fixed point set of $\tau$. Put differently, the pushforward of the induced Poisson structure on $\mathrm{Fix}(\sigma) = \F^{ms}(\Gamma)$ by the boundary measurement map $\mes \colon  \F^{ms}(\Gamma) \dashrightarrow G(\Omega_n)$ is the induced Poisson structure on $G(\Omega_n) = \mathrm{Fix}(\tau)$. At the same time, by Proposition~\ref{prop:induce}, the induced Poisson structure on $G(\Omega_n)$ coincides with the standard Poisson structure, hence the result.

 
\medskip
\section{Calculation of the Poisson bracket on move-symmetric weightings}\label{sec:pbc}
Let $\Gamma$ be an $n \times n$  move-symmetric plabic graph, and $ \F^{ms}(\Gamma)$ be the space of move-symmetric weightings on $\Gamma$. We will now explicitly compute the Poisson bracket on $ \F^{ms}(\Gamma)$ constructed in the previous section. In particular, we will show that this bracket is log-canonical in terms of face weights, thus completing the proof of Theorem~\ref{thm1}. 

\paragraph{The symplectic case.} First assume that $n$ is even, so that the associated boundary measurement matrix is symplectic. Then $\Gamma$ is necessarily symmetric. Let $I$ be an index set labeling faces of $\Gamma$, 
and let $I_- := \{ i \in I \mid i\mbox{th face is below the midline}\}$, $I_+ := \{ i \in I \mid i\mbox{th face is above the midline}\}$,  $I_0 := \{ i \in I \mid i\mbox{th face is on the midline}\}$. When saying that a face lies on the midline, we mean that its interior has a non-empty intersection with that line. For any $i \in I$, let $i' \in I$ be such that the face $i$, $i'$ are symmetric to each other with respect to the midline. 

{On $ \F^{ms}(\Gamma)$, we have $y_{i'} = y_i$. So, 
the coordinate ring $\C[ \F^{ms}(\Gamma)] $ of  $ \F^{ms}(\Gamma)$ is generated by 
$  y_i^{\pm 1}$, $i \in I_0 \sqcup I_+$, subject to the relation $\prod\nolimits_{i \in I_0}y_i \prod\nolimits_{i \in I_+}y_i^2= 1$.
Therefore, to describe a Poisson structure on $\F^{ms}(\Gamma)$, it suffices to find pairwise Poisson brackets of those $y_i$. 
Denote by $q_{ij}$ the $(i,j)$ entry of the adjacency matrix of the quiver $\Q$ dual to $\Gamma$ (note that the vertices of $\Q$ are faces of $\Gamma$ and so are also indexed by $I$). 

\begin{proposition}\label{prop:folding}
The Poisson bracket on $ \F^{ms}(\Gamma)$ constructed in Section \ref{sec3} reads
\begin{equation}\label{eq:folding}
\begin{gathered}
\{ y_i,  y_j\} = q_{ij} y_i  y_j, \quad \mbox{if $i \in I_0$ or $j \in I_0$},\quad
\{ y_i,  y_j\} = \frac{1}{2}q_{ij} y_i  y_j,  \quad \mbox{if }  i,j \in I_+.
\end{gathered}
\end{equation}
\end{proposition}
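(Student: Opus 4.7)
The plan is to compute the induced bracket on $\F^{ms}(\Gamma)$ by extending the coordinates $u_i := \log y_i$ to $\sigma$-invariant functions on the ambient space $\F(\Gamma)$ and taking their bracket there; the general theory of induced Poisson brackets on fixed loci guarantees that the restriction is independent of the choice of extension and equals the induced bracket. Since $\Gamma$ has even valency it is genuinely symmetric, so $\sigma$ is just reflection in the midline composed with vertex recoloring, acting on face weights by the involution $y_i \leftrightarrow y_{i'}$. In logarithmic coordinates the original bracket becomes the constant bracket $\{u_i, u_j\} = q_{ij}$, so it is enough to compute $\{u_i, u_j\}\vert_{\F^{ms}(\Gamma)}$ and then reintroduce the $y_i y_j$ factors to recover \eqref{eq:folding}.

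The key structural input I would establish first is the symmetry $q_{ij} = q_{i'j'}$ of the dual quiver. Since $\Gamma$ is symmetric, the map $\phi := $ reflection$+$recoloring is an automorphism of $\Gamma$ and hence induces an automorphism $\Phi$ of $\Q$ that acts on vertices by $i \mapsto i'$. The point is that reflection alone swaps ``left'' and ``right'' in the plane, so the convention ``white endpoint on the left of $e^*$'' forces every arrow of $\Q$ to flip direction; recoloring alone swaps white and black, again flipping every arrow. The two flips cancel, so $\Phi$ preserves arrow directions of $\Q$, giving $q_{ij} = q_{i'j'}$ (and in particular $q_{ij'} = q_{i'j}$).

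Next I would prove the geometric vanishing statement: for distinct $i, j \in I_+$ one has $q_{ij'} = 0$. Indeed, $i$ lies strictly above the midline while $j'$ lies strictly below, so any edge of $\Gamma$ shared by these two faces must lie on the midline; but a midline edge borders the unique face immediately above it, which is $i$, and the unique face immediately below it, which by the $\phi$-symmetry of $\Gamma$ must be $i'$. This forces $j' = i'$, contradicting $i \neq j$. With the quiver symmetry and this vanishing in hand, the rest is bookkeeping: take $\tilde u_i := u_i$ for $i \in I_0$ (already $\sigma$-invariant) and $\tilde u_i := \tfrac{1}{2}(u_i + u_{i'})$ for $i \in I_+$, each $\sigma$-invariant and with correct restriction to the fixed locus. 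Bilinearity of the bracket then gives, in the three cases, $q_{ij}$ for $i,j \in I_0$; for $i \in I_0$, $j \in I_+$, the value $\tfrac{1}{2}(q_{ij} + q_{ij'}) = q_{ij}$ using the quiver symmetry with $i' = i$ and $(j')' = j$; and for distinct $i, j \in I_+$, the value $\tfrac{1}{4}(q_{ij} + q_{ij'} + q_{i'j} + q_{i'j'}) = \tfrac{1}{2}(q_{ij} + q_{ij'}) = \tfrac{1}{2} q_{ij}$.

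I expect the main obstacle to be the verification that $\sigma$ induces an orientation-preserving automorphism of $\Q$: it requires a careful unwinding of the left/right convention in the definition of the dual quiver to confirm that reflection and recoloring each reverse all arrow directions, so that their composition acts trivially on orientations. Apart from this structural fact (and the short geometric lemma about midline-adjacent faces), the computation is routine bilinearity.
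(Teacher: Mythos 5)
Your proof is correct and follows essentially the same route as the paper: extend the coordinates to $\sigma$-invariant functions on $\F(\Gamma)$ (the paper uses $y_iy_{i'}$ where you use $\tfrac{1}{2}(\log y_i+\log y_{i'})$, a trivial difference), compute the ambient log-canonical bracket by bilinearity, and invoke the quiver symmetry $q_{ij}=q_{i'j'}$, $q_{ij'}=q_{i'j}$ together with the non-adjacency $q_{ij'}=0$ for distinct $i,j\in I_+$. The only added content is that you spell out why reflection-plus-recoloring preserves arrow directions of the dual quiver, which the paper leaves as an appeal to symmetry.
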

\begin{remark}
  \begin{figure}
 \centering
\begin{tikzpicture}[scale = 1.1]
\centering
\node () at (0,0) {
\begin{tikzpicture}[scale = 1.0]
\node [draw,circle,color=black, fill=black,inner sep=0pt,minimum size=3pt, opacity = 0.3] (A) at (0,1) {};
\node [draw,circle,color=black, fill=white,inner sep=0pt,minimum size=3pt, opacity = 0.3] (B) at (1,1) {};
\node [draw,circle,color=black, fill=white,inner sep=0pt,minimum size=3pt, opacity = 0.3] (C) at (0,2) {};
\node [draw,circle,color=black, fill=black,inner sep=0pt,minimum size=3pt, opacity = 0.3] (D) at (1,2) {};
\node [draw,circle,color=black, fill=black,inner sep=0pt,minimum size=3pt, opacity = 0.3] (E) at (0,3) {};
\node [draw,circle,color=black, fill=white,inner sep=0pt,minimum size=3pt, opacity = 0.3] (F) at (1,3) {};
\node [draw,circle,color=black, fill=white,inner sep=0pt,minimum size=3pt, opacity = 0.3] (G) at (0,0) {};
\node [draw,circle,color=black, fill=black,inner sep=0pt,minimum size=3pt, opacity = 0.3] (H) at (1,0) {};
 \node [draw,circle,color=black, fill=black,inner sep=0pt,minimum size=3pt, label = {[label distance=-2]  above: \footnotesize$ y_1$}]  (Y1) at (0.5,3.3){};
\node  [draw,circle,color=black, fill=black,inner sep=0pt,minimum size=3pt, label = {[label distance=-2]  left: \footnotesize$ y_2$}]  (Y2) at (-0.5,2.5){ };
\node  [draw,circle,color=black, fill=black,inner sep=0pt,minimum size=3pt, label = {[label distance=-2]  45: \footnotesize$ y_3$}]  (Y3) at(0.5,2.5) {};
\node  [draw,circle,color=black, fill=black,inner sep=0pt,minimum size=3pt, label = {[label distance=-2]  right: \footnotesize$ y_4$}]  (Y4) at (1.5,2.5) {};
\node [draw,circle,color=black, fill=black,inner sep=0pt,minimum size=3pt, label = {[label distance=-2]  left: \footnotesize$ y_5$}]  (Y5) at (-0.5,1.5){ };
\node  [draw,circle,color=black, fill=black,inner sep=0pt,minimum size=3pt, label = {[label distance=-2]  45: \footnotesize$ y_6$}]  (Y6) at(0.5,1.5) {};
\node   [draw,circle,color=black, fill=black,inner sep=0pt,minimum size=3pt, label = {[label distance=-2]  right: \footnotesize$ y_7$}] (Y7) at (1.5,1.5) {};
\node [draw,circle,color=black, fill=black,inner sep=0pt,minimum size=3pt, label = {[label distance=-2]  left: \footnotesize$ y_2$}] (Y2') at (-0.5,0.5){ };
\node  [draw,circle,color=black, fill=black,inner sep=0pt,minimum size=3pt, label = {[label distance=-2]  45: \footnotesize$ y_3$}]  (Y3') at(0.5,0.5) {};
\node  [draw,circle,color=black, fill=black,inner sep=0pt,minimum size=3pt, label = {[label distance=-2]  right: \footnotesize$ y_4$}]  (Y4') at (1.5,0.5) {};
\node [draw,circle,color=black, fill=black,inner sep=0pt,minimum size=3pt, label = {[label distance=-2]  below: \footnotesize$ y_1$}]  (Y1') at(0.5,-0.3){};
\draw [thick, ->] (Y1) -- (Y3);
\draw [thick, ->] (Y7) -- (Y6);
\draw [thick, ->] (Y6) -- (Y3);
\draw [thick, ->] (Y3) -- (Y4);
\draw [thick, ->, dotted] (Y4) -- (Y7);
\draw [thick, ->] (Y3) -- (Y2);
\draw [thick, ->, dotted] (Y2) -- (Y5);
\draw [thick, ->] (Y5) -- (Y6);
\draw [thick,dotted,->] (Y2) to [bend left] (Y1);
\draw [thick,dotted, ->] (Y4) to [bend right]  (Y1);

\draw [thick, ->] (Y1') -- (Y3');
\draw [thick, ->] (Y6) -- (Y3');
\draw [thick, ->] (Y3') -- (Y4');
\draw [thick, ->, dotted] (Y4') -- (Y7);
\draw [thick, ->] (Y3') -- (Y2');
\draw [thick, ->, dotted] (Y2') -- (Y5);
\draw [thick,dotted,->] (Y2') to [bend right] (Y1');
\draw [thick,dotted, ->] (Y4') to [bend left]  (Y1');
\draw[ opacity = 0.3]  (-1, 1) -- (A)   node[midway, below] {};
\draw[ opacity = 0.3]  (B) -- (A) -- (G) -- (H) -- (B);
\draw[ opacity = 0.3]  (A) -- (C) node[midway, left] {};
\draw[ opacity = 0.3] (C) -- (D) -- (F) -- (E) -- (C);
\draw[ opacity = 0.3]   (C) -- +(-1,0) node[midway, above] {};
\draw[ opacity = 0.3]  (E) -- +(-1,0) node[midway, above] {};
\draw[ opacity = 0.3]  (G) -- +(-1,0) node[midway, above] {};
\draw[ opacity = 0.3]  (B) -- +(1,0) node[midway, below] {};
\draw[ opacity = 0.3]  (F) -- +(1,0) node[midway, below] {};
\draw[ opacity = 0.3] (H) -- +(1,0) node[midway, below] {};
\draw[ opacity = 0.3]  (D) -- (2,2) node[midway, above] {};
\draw[ opacity = 0.3]  (D) -- (B) node[midway, left] {};

\draw [dashed, opacity = 0.3] (-1,3.8) -- (2,3.8);
\draw [dashed, opacity = 0.3] (2,-0.8) -- (2,3.8) ; 
\draw [ dashed, opacity = 0.3] (-1,-0.8) -- (2,-0.8) ;
\draw [ dashed, opacity = 0.3]  (-1,-0.8) -- (-1,3.8);

\end{tikzpicture}};
\node  () at (3,0) {$\longrightarrow$};
\node () at (6,0) {
\begin{tikzpicture}[scale = 1.0]
 \node [draw,circle,color=black, fill=black,inner sep=0pt,minimum size=3pt, label = {[label distance=-2]  above: \footnotesize$ y_1$}]  (Y1) at (0.5,3.3){};
\node  [draw,circle,color=black, fill=black,inner sep=0pt,minimum size=3pt, label = {[label distance=-2]  left: \footnotesize$ y_2$}]  (Y2) at (-0.5,2.5){ };
\node  [draw,circle,color=black, fill=black,inner sep=0pt,minimum size=3pt, label = {[label distance=-2]  45: \footnotesize$ y_3$}]  (Y3) at(0.5,2.5) {};
\node  [draw,circle,color=black, fill=black,inner sep=0pt,minimum size=3pt, label = {[label distance=-2]  right: \footnotesize$ y_4$}]  (Y4) at (1.5,2.5) {};
\node [draw,circle,color=black, fill=black,inner sep=0pt,minimum size=3pt, label = {[label distance=-2]  left: \footnotesize$ y_5$}]  (Y5) at (-0.5,1.5){ };
\node  [draw,circle,color=black, fill=black,inner sep=0pt,minimum size=3pt, label = {[label distance=-2]  below: \footnotesize$ y_6$}]  (Y6) at(0.5,1.5) {};
\node   [draw,circle,color=black, fill=black,inner sep=0pt,minimum size=3pt, label = {[label distance=-2]  right: \footnotesize$ y_7$}] (Y7) at (1.5,1.5) {};
\draw [thick, ->] (Y1) -- (Y3);
\draw [thick, ->-] (Y7) to [bend left = 20] (Y6);
\draw [thick, ->-] (Y6) to [bend left = 20] (Y3);
\draw [thick, ->-] (Y7) to [bend right = 20] (Y6);
\draw [thick, ->-] (Y6) to [bend right = 20] (Y3);
\draw [thick, ->] (Y3) -- (Y4);
\draw [thick, ->] (Y4) -- (Y7);
\draw [thick, ->] (Y3) -- (Y2);
\draw [thick, ->] (Y2) -- (Y5);
\draw [thick, ->-] (Y5)  to [bend left = 20]  (Y6);
\draw [thick, ->-] (Y5)  to [bend right = 20]  (Y6);
\draw [thick,dotted,->] (Y2) to [bend left] (Y1);
\draw [thick,dotted, ->] (Y4) to [bend right]  (Y1);

\end{tikzpicture}};
\end{tikzpicture}
\caption{The quiver defining a Poisson bracket on symmetric weightings is constructed by means of folding. The  parameters $y_1, \dots, y_7$ are subject to the relation $(y_1y_2y_3y_4)^2y_5y_6y_7 = 1$.}\label{fig:fold}
    
\end{figure}
One can understand this bracket as a \emph{folding} of the bracket on $ \F(\Gamma)$. In the quiver language, to obtain the bracket on $ \F^{ms}(\Gamma)$, one folds the dual quiver $\Q$  of $\Gamma$ along the midline and then doubles all the arrows incident to vertices on the midline, see Figure~\ref{fig:fold}. This gives a Poisson structure on  $ \F^{ms}(\Gamma)$  which is twice the bracket  \eqref{eq:folding}. This procedure is analogous to Dynkin diagram folding $A_{2k-1} \to C_k$ and, in the cluster algebra setting, can be understood as exchange matrix folding followed by skew-symmetrization, cf.~\cite{felikson2012cluster}.
\end{remark}


\begin{proof}[\rm\textbf{Proof of Proposition \ref{prop:folding}}]
By construction, the Poisson bracket of two functions on $ \F^{ms}(\Gamma)$ can be found by extending them to $\sigma$-invariant functions on $\F_{}(\Gamma)$, taking the Poisson bracket of extensions, and then restricting the result back to  $ \F^{ms}(\Gamma)$. Let $\{\,,\}$ be the bracket on $\F^{ms}(\Gamma)$ and   $\{\,,\}'$ be the bracket on $\F_{}(\Gamma)$. Note that, for any $i \in I$, the function $y_iy_{i'}$ is invariant under the involution $\sigma$ on  $\F_{}(\Gamma)$. Therefore, at any point of $ \F^{ms}(\Gamma)$ we have
\begin{equation*}
\{y_i^2, y_j^2 \} = \{y_iy_{i'}, y_j y_{j'} \}'= (q_{ij} + q_{i'j'} + q_{i'j} + q_{ij'})y_iy_{i'} y_j y_{j'} = 2(q_{ij} + q_{i'j})y_i^2y_j^2,
\end{equation*}
where we used that $q_{ij} = q_{i'j'}$, $q_{ij'} = q_{i'j}$ due to symmetry and that $y_{i'} = y_i, y_{j'}= y_j$ on $ \F^{ms}(\Gamma)$. Furthermore, if $i \in I_0$ or $j \in I_0$, then $q_{i'j} = q_{ij}$, so the above formula simplifies to $
\{y_i^2, y_j^2 \} = 4q_{ij} y_i^2y_j^2
$
and hence $
\{y_i, y_j \} = q_{ij} y_iy_j.
$
As for the case $i,j \in I_+$, in that situation the faces $i$ and $j'$ cannot be adjacent (since $i$th face is above the midline, and face $j'$ is below), so $q_{ij'} = 0$, and we get 
$
\{ y_i, y_j \} = \frac{1}{2} q_{ij} y_iy_j.
$ 
\end{proof}
\begin{example}
Consider the graph shown in Figure \ref{fig:dual}. Its (move-)symmetric weightings are parametrized by $y_1, y_2, y_3,y_4$, subject to the relation $y_1^2y_2y_3y_4 = 1$. Let us, for example, find the Poisson bracket of $y_1$ and $y_3$. As above, let $\{\,,\}$ be the bracket on move-symmetric weightings and   $\{\,,\}'$ be the bracket on all weightings. The former is induced by the latter as the bracket on the fixed locus of Poisson involution $y_1 \leftrightarrow y_5$. The function $y_1y_5$ is invariant under this involution and, when restricted to move-symmetric weightings, evaluates to $y_1^2$. The function $y_3$ is also invariant. So, we can compute the bracket $\{y_1^2, y_3\} $ by first finding $ \{y_1y_5,y_3\}'$ and then restricting to move-symmetric weightings. One has
$
 \{y_1y_5,y_3\}' = -2y_1y_3y_5
$
which, on move-symmetric weightings, gives $-2y_1^2y_3$. So,
$
\{y_1,y_3\} = -y_1y_3.
$
\end{example}

\paragraph{The orthogonal case.} Now assume that $n$ (the number of sources/sinks of the graph $\Gamma$) is odd, so that the associated boundary measurement matrix is orthogonal. Then $\Gamma$ necessarily contains a chain of squares on the midline as in Figure \ref{fig:ave}.
  \begin{figure}[t]
 \centering
\begin{tikzpicture}[scale = 0.7]
\node () at (0,0){
\begin{tikzpicture}[scale = 0.65]
\node [draw,circle,color=black, fill=black,inner sep=0pt,minimum size=3pt, opacity = 0.3] (A) at (0.3,0) {};
\node [draw,circle,color=black, fill=white,inner sep=0pt,minimum size=3pt, opacity = 0.3] (B) at (1,0.7) {};
\node [draw,circle,color=black, fill=black,inner sep=0pt,minimum size=3pt, opacity = 0.3] (C) at (1.7,0) {};
\node [draw,circle,color=black, fill=white,inner sep=0pt,minimum size=3pt, opacity = 0.3] (D) at (1,-0.7) {};
\node [draw,circle,color=black, fill=white,inner sep=0pt,minimum size=3pt, opacity = 0.3] (E) at (1,2) {};
\node [draw,circle,color=black, fill=black,inner sep=0pt,minimum size=3pt, opacity = 0.3] (F) at (1,-2) {};
 \node [draw,circle,color=black, fill=black,inner sep=0pt,minimum size=3pt, label = { [label distance=-1]  above: \footnotesize$ 1$}]  (1) at (1,0){};
  \node [draw,circle,color=black, fill=black,inner sep=0pt,minimum size=3pt, label = { [label distance=-1]  above: \footnotesize$ 1$}]  (1') at (4,0){};
 \node [draw,circle,color=black, fill=black,inner sep=0pt,minimum size=3pt, label = {[label distance=-2]  left: \footnotesize$ y_2$}]  (Y2) at (-0.5,1.25){};
  \node [draw,circle,color=black, fill=black,inner sep=0pt,minimum size=3pt, label = {[label distance=-2]  left: \footnotesize$ 2y_2$}]  (2Y2) at (-0.5,-1.25){};
   \node [draw,circle,color=black, fill=black,inner sep=0pt,minimum size=3pt, label = {[label distance=-5]  120: \footnotesize$ y_3$}]  (Y3) at (2.5,1.25){};
    \node [draw,circle,color=black, fill=black,inner sep=0pt,minimum size=3pt, label = {[label distance=-5]  -60: \footnotesize$y_3$}]  (12Y3) at (2.5,-1.25){};
       \node [draw,circle,color=black, fill=black,inner sep=0pt,minimum size=3pt, label = {[label distance=-2]  right: \footnotesize$ 2y_4$}]  (Y4) at (5.5,1.25){};
    \node [draw,circle,color=black, fill=black,inner sep=0pt,minimum size=3pt, label = {[label distance=-2]  right: \footnotesize$y_4$}]  (12Y4) at (5.5,-1.25){};
     \node [draw,circle,color=black, fill=black,inner sep=0pt,minimum size=3pt, label = {[label distance=-2]  above: \footnotesize$ y_1$}]  (Y1) at (2.5,2.75){};
        \node [draw,circle,color=black, fill=black,inner sep=0pt,minimum size=3pt, label = {[label distance=-2]  below: \footnotesize$ y_1$}]  (Y1') at (2.5,-2.75){};
        \draw [dotted, ->-, thick] (Y1) to [bend right]  (Y2);
                \draw [->, thick] (Y3) -- (Y1);
                     \draw [dotted, ->, thick] (2Y2)  to [bend left = 15] (Y2);
                             \draw [dotted, ->-,thick ] (Y1') to [bend left]  (2Y2);
                                   \draw [->, thick] (12Y3) -- (Y1');
                                           \draw [dotted, ->-, thick] (Y1) to [bend left]  (Y4);
                     \draw [dotted, ->,thick ] (Y4)  to [bend left = 15]  (12Y4);
                             \draw [dotted, ->-, thick] (Y1') to [bend right]  (12Y4);
                                                     \draw [ ->-,thick ] (1) -- (Y3);
                                                        \draw [ ->-,thick ] (1) -- (2Y2);
                                                           \draw [ ->-, thick] (Y2) -- (1);
                                                                          \draw [ ->-, thick] (12Y3) -- (1);
                                                                                                                               \draw [ ->-,thick ] (1') -- (Y4);
                                                        \draw [ ->-, thick] (1') -- (12Y3);
                                                           \draw [ ->-, thick] (Y3) -- (1');
                                                                          \draw [ ->-, thick] (12Y4) -- (1');
                                                                                  \draw [->, thick] (Y4)-- (Y3);
                                                                                             \draw [->, thick] (2Y2) --  (12Y3);
\draw [opacity = 0.3] (A) -- (B) -- (C) -- (D) -- (A);
\draw  [opacity = 0.3](A) -- +(-1.8,0);
\draw  [opacity = 0.3](B) -- (E);
\draw  [opacity = 0.3](E) -- +(2.5,0);
\draw  [opacity = 0.3](E) -- +(-2.5,0);
\draw [opacity = 0.3] (C) -- +(1,0);
\draw  [opacity = 0.3](D) -- (F);
\draw [opacity = 0.3] (F) -- +(2.5,0);
\draw  [opacity = 0.3](F) -- +(-2.5,0);
\def\n {-2}
\node [draw,circle,color=black, fill=black,inner sep=0pt,minimum size=3pt, opacity = 0.3] (A) at (5.3+\n,0) {};
\node [draw,circle,color=black, fill=white,inner sep=0pt,minimum size=3pt, opacity = 0.3] (B) at (6 + \n,0.7) {};
\node [draw,circle,color=black, fill=black,inner sep=0pt,minimum size=3pt, opacity = 0.3] (C) at (6.7 + \n,0) {};
\node [draw,circle,color=black, fill=white,inner sep=0pt,minimum size=3pt, opacity = 0.3] (D) at (6 +\n,-0.7) {};
\node  [draw,circle,color=black, fill=black,inner sep=0pt,minimum size=3pt, opacity = 0.3] (E) at (6+\n,2) {};
\node  [draw,circle,color=black, fill=white,inner sep=0pt,minimum size=3pt, opacity = 0.3] (F) at (6+\n,-2) {};
\draw [opacity = 0.3] (A) -- (B) -- (C) -- (D) -- (A);
\draw  [opacity = 0.3](A) -- +(-1,0);
\draw  [opacity = 0.3](B) -- (E);
\draw  [opacity = 0.3](E) -- +(2.5,0);
\draw  [opacity = 0.3](E) -- +(-2.5,0);
\draw [opacity = 0.3] (C) -- +(1.8,0);
\draw  [opacity = 0.3](D) -- (F);
\draw [opacity = 0.3] (F) -- +(2.5,0);
\draw  [opacity = 0.3](F) -- +(-2.5,0);
\draw [dashed] (-1.5,-3.75) -- (-1.5,3.75) -- (6.5,3.75) -- (6.5,-3.75) -- cycle;
\end{tikzpicture}};

\node  () at (6,0) {$\longrightarrow$};
\node () at (12,0) {
\begin{tikzpicture}[scale = 0.65]
 \node [draw,circle,color=black, fill=black,inner sep=0pt,minimum size=3pt, label = {[label distance=-2]  left: \footnotesize$\sqrt{2} y_2$}]  (Y2) at (-0.5,1.25){};
   \node [draw,circle,color=black, fill=black,inner sep=0pt,minimum size=3pt, label = {[label distance=-2]  below: \footnotesize$ y_3$}]  (Y3) at (2.5,1.25){};
       \node [draw,circle,color=black, fill=black,inner sep=0pt,minimum size=3pt, label = {[label distance=-2]  right: \footnotesize$ \sqrt{2} y_4$}]  (Y4) at (5.5,1.25){};
     \node [draw,circle,color=black, fill=black,inner sep=0pt,minimum size=3pt, label = {[label distance=-2]  above: \footnotesize$ y_1$}]  (Y1) at (2.5,2.75){};
        \draw [dotted, ->-, thick] (Y1) to [bend right]  (Y2);
                \draw [->, thick] (Y3) -- (Y1);
                                           \draw [dotted, ->-, thick] (Y1) to [bend left]  (Y4);
                                                                                  \draw [->, dotted,thick ] (Y4) -- (Y3);
                                                                                             \draw [->, dotted, thick] (Y2) --   (Y3);
\end{tikzpicture}
};

\end{tikzpicture}
\caption{ The quiver representing the Poisson bracket on move-symmetric weightings (multiplied by a factor of $2$). The  parameters $y_1, \sqrt{2}y_2, y_3, \sqrt{2}y_4$ are subject to the relation $y_1(\sqrt{2}y_2)y_3(\sqrt{2}y_4) = \pm 1$.}\label{fig:ave}
    
\end{figure}
 The number of squares may be arbitrary, and the colors of vertices of each square might be opposite to those shown in the figure. 
 
 We keep the same labeling of faces as in the symplectic case. The move symmetry condition implies $y_i = 1$ for any $i \in I_0$, while for any $i \in I_+$ we have $y_{i'} = cy_i$ where  $c_i \in \R_+$ depends only on the combinatorics of the graph (as before, faces $i'$ and $i$ are symmetric with respect to the midline). Consider symmetrized coordinates  $\sqrt{y_iy_{i'}}$, which, by definition, are understood as $\sqrt{c_i}y_i$. Then the coordinate ring $\C[ \F^{ms}(\Gamma)] $ of  $ \F^{ms}(\Gamma)$  is generated by
$(\sqrt{y_iy_{i'}})^{\pm 1}$, $i \in I_+$, subject to the relation  $\prod\nolimits_{i \in I_+}\sqrt{y_iy_{i'}}  = \pm 1$.
To describe a Poisson structure on  $ \F^{ms}(\Gamma)$, it suffices to find pairwise Poisson brackets of  functions $\sqrt{y_iy_{i'}}$. 

\begin{proposition}\label{prop:ipbms}
Up to a scalar factor,  the number of arrows from vertex $i$ to vertex $j$ in the quiver defining the bracket on $ \F^{ms}(\Gamma)$ constructed in Section \ref{sec3} is the average between the corresponding numbers in the upper and lower parts of $\Q$, see Figure~\ref{fig:ave}. Specifically, the bracket on $ \F^{ms}(\Gamma)$ is given by
\begin{gather*}
\{\sqrt{y_iy_{i'}}, \sqrt{y_jy_{j'}}\} = \frac{1}{4} (q_{ij} + q_{i'j'})\sqrt{y_iy_{i'}}\sqrt{y_jy_{j'}}.
\end{gather*}
\end{proposition}
\begin{remark}
Another possible choice of coordinates on $ \F^{ms}(\Gamma)$ is given by $y_i$, $i \in I_+$. Those coordinates are related to the above ones by the rule $\sqrt{y_iy_{i'}} = \sqrt{c_i}y_i$, so the Poisson brackets of $y_i$'s have the same form $\{y_i, y_j\} =  \frac{1}{4} (q_{ij} + q_{i'j'})y_iy_j$. Our motivation for considering the symmetrized coordinates $\sqrt{y_iy_{i'}}$ is that they turn out to be cluster coordinates, see Section \ref{sec:dbc} and, in particular, Example \ref{fgex}.
\end{remark}

\begin{proof}[\bf{Proof of Proposition \ref{prop:ipbms}}]
In this case, the involution $\sigma$ is a composition of square moves and reflection, performed in arbitrary order.  The quantity $y_iy_{i'}$, where $i \in I_+$, is invariant under reflection, so the effect of $\sigma$ on that function boils down to square moves. Using the square move formulas, along with the relation $q_{i'k} = -q_{ik}$ which, due to move-symmetry, holds for any $i \in I_+$ and $k \in I_0$, we get that
$\sigma^* (y_iy_{i'}) = \eta_i y_iy_{i'}$ where $\eta_i := \prod_{k \in I_0} y_k^{|q_{ik}|}.$ Furthermore, the action of $\sigma$ on $\eta_i$ is given by $\sigma^* \eta_i = \eta_i^{-1}$. Therefore, the function $\eta_i(y_iy_{i'})^2$ is invariant under $\sigma$. Also using that $\eta_i =  1$ on $ \F^{ms}(\Gamma)$ for all $i \in I_0$, at any point of $ \F^{ms}(\Gamma)$ we get
$$
\{(y_iy_{i'})^2, (y_jy_{j'})^2\} = \{\eta_i(y_iy_{i'})^2, \eta_j(y_jy_{j'})^2\}'
$$
for all $ i,j \in I_+$. Further, once again using that $q_{ik} = -q_{i'k}$ for any $i \in I_+$, $k \in I_0$, we get that $\{y_iy_{i'}, \eta_j\} = 0$ for all $i,j \in I_+$. Also, since all vertices in $I_0$ are disjoint, we have $\{\eta_i, \eta_j\} = 0$. So, 
$$
 \{(y_iy_{i'})^2, (y_jy_{j'})^2\} =  \{(y_iy_{i'})^2, (y_jy_{j'})^2\}' = 4(q_{ij} + q_{i'j'} + q_{i'j} + q_{ij'})(y_iy_{i'} )^2(y_j y_{j'})^2. 
$$
Finally, observe that the faces $i$ and $j'$ cannot be adjacent (unless $i = j$ in which case the proposition is trivial), and so $q_{i'j} = 0$. Analogously, $q_{ij'} = 0$. The result follows. \qedhere
\end{proof}

  \begin{figure}[t]
 \centering
\begin{tikzpicture}[scale = 0.75]
\node () at (0,0){\begin{tikzpicture}[scale = 0.8]
\node [draw,circle,color=black, fill=black,inner sep=0pt,minimum size=3pt, opacity = 1] (A) at (0.3,0) {};
\node [draw,circle,color=black, fill=white,inner sep=0pt,minimum size=3pt, opacity = 1] (B) at (1,0.7) {};
\node [draw,circle,color=black, fill=black,inner sep=0pt,minimum size=3pt, opacity = 1] (C) at (1.7,0) {};
\node [draw,circle,color=black, fill=white,inner sep=0pt,minimum size=3pt, opacity = 1] (D) at (1,-0.7) {};
\node [draw,circle,color=black, fill=white,inner sep=0pt,minimum size=3pt, opacity = 1] (E) at (1,2) {};
\node [draw,circle,color=black, fill=black,inner sep=0pt,minimum size=3pt, opacity = 1] (F) at (1,-2) {};
\node () at (1,0) {\footnotesize$y_4$};
\node () at (0,1) {\footnotesize$y_2$};
\node () at (0,-1) {\footnotesize$y_5$};
\node () at (2,1) {\footnotesize$y_3$};
\node () at (2,-1) {\footnotesize$y_6$};
\node () at (1,2.5) {\footnotesize$y_1$};
\node () at (1,-2.5) {\footnotesize$y_7$};
\draw [] (A) -- (B) -- (C) -- (D) -- (A);
\draw (A) -- +(-1.3,0);
\draw (B) -- (E);
\draw (E) -- +(2,0);
\draw (E) -- +(-2,0);
\draw (C) -- +(1.3,0);
\draw (D) -- (F);
\draw (F) -- +(2,0);
\draw (F) -- +(-2,0);
\draw [dashed] (-1,-3) -- (-1,3) -- (3,3) -- (3,-3) -- cycle;
\end{tikzpicture}};
\node () at (4,0) {$\xrightarrow{\mbox{square move}}$};

\node () at (8,0){\begin{tikzpicture}[scale = 0.8]

\draw [dashed] (-1,-3) -- (-1,3) -- (3,3) -- (3,-3) -- cycle;
\node [draw,circle,color=black, fill=white,inner sep=0pt,minimum size=3pt, opacity = 1] (A) at (0.3,0) {};
\node [draw,circle,color=black, fill=black,inner sep=0pt,minimum size=3pt, opacity = 1] (B) at (1,0.7) {};
\node [draw,circle,color=black, fill=white,inner sep=0pt,minimum size=3pt, opacity = 1] (C) at (1.7,0) {};
\node [draw,circle,color=black, fill=black,inner sep=0pt,minimum size=3pt, opacity = 1] (D) at (1,-0.7) {};
\node [draw,circle,color=black, fill=white,inner sep=0pt,minimum size=3pt, opacity = 1] (E) at (1,2) {};
\node [draw,circle,color=black, fill=black,inner sep=0pt,minimum size=3pt, opacity = 1] (F) at (1,-2) {};
\node () at (1,0) {\footnotesize$y_4^{-1}$};
\node () at (0,1) {\footnotesize$y_2(1+y_4)$};
\node () at (0,-1) {\footnotesize$\dfrac{y_5}{1+y_4^{-1}}$};
\node () at (2,1) {\footnotesize$\dfrac{y_3}{1+y_4^{-1}}$};
\node () at (2,-1) {\footnotesize$y_6(1+y_4)$};
\node () at (1,2.5) {\footnotesize$y_1$};
\node () at (1,-2.5) {\footnotesize$y_7$};
\draw [] (A) -- (B) -- (C) -- (D) -- (A);
\draw (A) -- +(-1.3,0);
\draw (B) -- (E);
\draw (E) -- +(2,0);
\draw (E) -- +(-2,0);
\draw (C) -- +(1.3,0);
\draw (D) -- (F);
\draw (F) -- +(2,0);
\draw (F) -- +(-2,0);

\end{tikzpicture}};
\node () at (12,0) {$\xrightarrow{\mbox{reflection}}$};

\node () at (16,0){\begin{tikzpicture}[xscale = 0.8, yscale = -0.8]

\draw [dashed] (-1,-3) -- (-1,3) -- (3,3) -- (3,-3) -- cycle;
\node [draw,circle,color=black, fill=white,inner sep=0pt,minimum size=3pt, opacity = 1] (A) at (0.3,0) {};
\node [draw,circle,color=black, fill=black,inner sep=0pt,minimum size=3pt, opacity = 1] (B) at (1,0.7) {};
\node [draw,circle,color=black, fill=white,inner sep=0pt,minimum size=3pt, opacity = 1] (C) at (1.7,0) {};
\node [draw,circle,color=black, fill=black,inner sep=0pt,minimum size=3pt, opacity = 1] (D) at (1,-0.7) {};
\node [draw,circle,color=black, fill=white,inner sep=0pt,minimum size=3pt, opacity = 1] (E) at (1,2) {};
\node [draw,circle,color=black, fill=black,inner sep=0pt,minimum size=3pt, opacity = 1] (F) at (1,-2) {};
\node () at (1,0) {\footnotesize$y_4^{-1}$};
\node () at (0,1) {\footnotesize$y_2(1+y_4)$};
\node () at (0,-1) {\footnotesize$\dfrac{y_5}{1+y_4^{-1}}$};
\node () at (2,1) {\footnotesize$\dfrac{y_3}{1+y_4^{-1}}$};
\node () at (2,-1) {\footnotesize$y_6(1+y_4)$};
\node () at (1,2.5) {\footnotesize$y_1$};
\node () at (1,-2.5) {\footnotesize$y_7$};
\draw [] (A) -- (B) -- (C) -- (D) -- (A);
\draw (A) -- +(-1.3,0);
\draw (B) -- (E);
\draw (E) -- +(2,0);
\draw (E) -- +(-2,0);
\draw (C) -- +(1.3,0);
\draw (D) -- (F);
\draw (F) -- +(2,0);
\draw (F) -- +(-2,0);

\end{tikzpicture}};
\end{tikzpicture}
\caption{ Involution $\sigma$ on the face weight space of a move-symmetric plabic graph. 
}\label{fig:msinv}
    
\end{figure}

\begin{example}
Figure \ref{fig:msinv} shows the action of the involution $\sigma$ on the face weight space of the move-symmetric plabic graph from Figure \ref{fig:ms}. The functions $y_4^{-1}(y_2y_5)^2$,  $y_4^{-1}(y_3y_6)^2$ are invariant under $\sigma$, so the Poisson bracket of their restrictions to move-symmetric weightings is equal to the restriction of their Poisson bracket. This gives 
$
\{(y_2y_5)^2, (y_3y_6)^2\} = 4(y_2y_5)^2(y_3y_6)^2,
$
so
$
\{\sqrt{y_2y_5}, \sqrt{y_3y_6}\} = \frac{1}{4}\sqrt{y_2y_5} \sqrt{y_3y_6}.
$
\end{example}

\medskip

\section{Plabic graphs, double Bruhat cells, and cluster coordinates}\label{sec:dbc}
\paragraph{Double Bruhat cells and double words.} Let $G$ be a complex connected reductive Lie group, $T \subset G$ be a maximal torus, and $B_\pm$ be a pair of opposite Borel subgroups with $B_+ \cap B_- = T$. Let also $W = N_G(T) / T$, where  $N_G(T)$  is the normalizer of $T$ in $G$, be the Weyl group of $G$. For any $u,v \in W$, the corresponding \emph{double Bruhat cell} $G^{u,v}$ is defined as the intersection of opposite Bruhat cells $B_+ u B_+ \cap B_- v B_-$ \cite{fomin1999double}. Here a double coset $BwB$ of an element of $w \in W$ with respect to a Borel subgroup $B \subset G$ is defined as $B\dot wB$, where $\dot w \in N_G(T)$ is any representative of $w$ in $N_G(T)$.

Any complex connected reductive Lie group $G$ is a disjoint union of its double Bruhat cells. Those cells can also be described as \emph{$T$-leaves}, i.e. orbits of symplectic leaves of the standard Poisson structure on $G$ under the (left or right) action of the maximal torus \cite{hoffmann2000factorization, kogan2002symplectic}. 


Let $\{\alpha_1, \dots, \alpha_k\}$ be the simple roots of $G$. 
Then elements of the Weyl group $W$ can be represented by words in the alphabet $[1,k] := \{1, \dots, k\}$. By definition, the Weyl group element corresponding to the word $i_1\dots i_m$ is $s_{i_1} \cdots s_{i_m}$, where $s_i$ is the simple reflection corresponding to the root $\alpha_i$. Accordingly, elements of the direct product $W \times W$ of two copies of $W$ can be represented by words in the alphabet  $ [-k,-1] \cup [1,k]$ where symbols $i \in [-k,-1]$ correspond to generators of the first copy of $W$, and symbols $i \in [1,k]$ to generators of the second copy of $W$. Such words are called \emph{double words in $W$.} 
A double word is \emph{reduced} 
 if it has the shortest length among all words representing the given element of $W \times W$ (equivalently, if it is a shuffle of a reduced word in the first copy of $W$ and a reduced word in the second copy of $W$).

\begin{figure}[p]

$${B_k} \quad \dynkin[ labels = { 1,2,k-1,k}, root radius=.035, arrow shape/.style={-{Bourbaki[length=3pt]}}, scale = 2] B{*2.*2} \qquad \qquad
 C_k \quad \dynkin[labels = {1,2,k-1,k}, root radius=.035,arrow shape/.style={-{Bourbaki[length=3pt]}}, scale = 2] C{*2.*2} $$
%
 \caption{Dynkin diagrams of types $B_k$ and $C_k$. We label  the roots so that the double edge is between the nodes labeled $k-1$ and $k$. }\label{fig:dynkin}
\end{figure}

\begin{figure}[p]
\centering
\begin{tikzpicture}[scale = 1]
\centering
\node () at (0.5,0.25){ \footnotesize $$};
\node () at (0.5,2.25){ \footnotesize $$};
\node () at (0.5,1.25){ \footnotesize $$};

\draw [dashed] (0,3) -- (1,3);
\draw [dashed] (1,0) -- (1,3) node[pos = 1/6, right] {\footnotesize$i$} node[pos = 1/3, right] {\footnotesize$i+1$} node[pos = 5/6, right] {\footnotesize$n+1 - i$} node[pos = 2/3, right] {\footnotesize$n-i$};;; ;
\draw [ dashed] (0,0) -- (1,0) ;
\draw [ dashed]  (0,0) -- (0,3) node[pos = 1/6, left] {\footnotesize$i$} node[pos = 1/3, left] {\footnotesize$i+1$} node[pos = 5/6, left] {\footnotesize$n+1 - i$} node[pos = 2/3, left] {\footnotesize$n-i$};;; ;;
\node [draw,circle,color=black, fill=black,inner sep=0pt,minimum size=3pt, opacity = 1] (A) at (0.5,2.5) {};
\node [draw,circle,color=black, fill=white,inner sep=0pt,minimum size=3pt, opacity = 1] (B) at (0.5,2) {};
\draw (A) -- (B);
\draw (0,2.5) -- (A) -- (1,2.5);
\draw (0,2) -- (B) -- (1,2);
\node [draw,circle,color=black, fill=black,inner sep=0pt,minimum size=3pt, opacity = 1] (A) at (0.5,1) {};
\node [draw,circle,color=black, fill=white,inner sep=0pt,minimum size=3pt, opacity = 1] (B) at (0.5,0.5) {};
\draw (A) -- (B);
\draw (0,1) -- (A) -- (1,1);
\draw (0,0.5) -- (B) -- (1,0.5);
\node (B) at (-1.5,2.8) {$\Gamma_{i}:$};
\end{tikzpicture}
\qquad
\begin{tikzpicture}[scale = 1]
\centering
\node () at (0.5,0.25){ \footnotesize $$};
\node () at (0.5,2.25){ \footnotesize $$};
\node () at (0.5,1.25){ \footnotesize $$};

\draw [dashed] (0,3) -- (1,3);
\draw [dashed] (1,0) -- (1,3) node[pos = 1/6, right] {\footnotesize$i$} node[pos = 1/3, right] {\footnotesize$i+1$} node[pos = 5/6, right] {\footnotesize$n+1 - i$} node[pos = 2/3, right] {\footnotesize$n-i$};;; ;
\draw [ dashed] (0,0) -- (1,0) ;
\draw [ dashed]  (0,0) -- (0,3) node[pos = 1/6, left] {\footnotesize$i$} node[pos = 1/3, left] {\footnotesize$i+1$} node[pos = 5/6, left] {\footnotesize$n+1 - i$} node[pos = 2/3, left] {\footnotesize$n-i$};;; ;;
\node [draw,circle,color=black, fill=white,inner sep=0pt,minimum size=3pt, opacity = 1] (A) at (0.5,2.5) {};
\node [draw,circle,color=black, fill=black,inner sep=0pt,minimum size=3pt, opacity = 1] (B) at (0.5,2) {};
\draw (A) -- (B);
\draw (0,2.5) -- (A) -- (1,2.5);
\draw (0,2) -- (B) -- (1,2);
\node [draw,circle,color=black, fill=white,inner sep=0pt,minimum size=3pt, opacity = 1] (A) at (0.5,1) {};
\node [draw,circle,color=black, fill=black,inner sep=0pt,minimum size=3pt, opacity = 1] (B) at (0.5,0.5) {};
\draw (A) -- (B);
\draw (0,1) -- (A) -- (1,1);
\draw (0,0.5) -- (B) -- (1,0.5);
\node (B) at (-1.5,2.8) {$\Gamma_{-i}:$};
\end{tikzpicture}
\caption{Plabic graphs associated to the root $ \alpha_i$ for $i \in [1,k-1]$ in types $B_k$ and $C_k$. Here $n$ is the dimension of the defining representation, i.e., $n = 2k$ in type $C_k$ and $n = 2k+1$ in type $B_k$. }\label{fig:graphroots}
\end{figure}

\begin{figure}[p]
\centering
\begin{tikzpicture}[scale = 1]
\centering
\node () at (0.5,0.25){ \footnotesize $$};
\node () at (0.5,2.25){ \footnotesize $$};
\node () at (0.5,1.25){ \footnotesize $$};

\draw [dashed] (0,3) -- (1,3);
\draw [dashed] (1,0) -- (1,3) node[pos = 3/4, right] {\footnotesize$k+2$} node[pos = 1/4, right] {\footnotesize$k$} node[pos = 1/2, right] {\footnotesize$k+1$}; 
\draw [ dashed] (0,0) -- (1,0) ;
\draw [ dashed]  (0,0) -- (0,3) node[pos = 3/4, left] {\footnotesize$k+2$} node[pos = 1/4, left] {\footnotesize$k$} node[pos = 1/2, left] {\footnotesize$k+1$}; 
\node [draw,circle,color=black, fill=white,inner sep=0pt,minimum size=3pt, opacity = 1] (A) at (0.25,1.5) {};
\node [draw,circle,color=black, fill=black,inner sep=0pt,minimum size=3pt, opacity = 1] (B) at (0.5,1.75) {};
\node [draw,circle,color=black, fill=white,inner sep=0pt,minimum size=3pt, opacity = 1] (C) at (0.75,1.5) {};
\node [draw,circle,color=black, fill=black,inner sep=0pt,minimum size=3pt, opacity = 1] (D) at (0.5,1.25) {};
\node [draw,circle,color=black, fill=black,inner sep=0pt,minimum size=3pt, opacity = 1] (E) at (0.5,2.25) {};
\node [draw,circle,color=black, fill=white,inner sep=0pt,minimum size=3pt, opacity = 1] (F) at (0.5,0.75) {};
\draw [] (A) -- (B) -- (C) -- (D) -- (A);
\draw (A) -- +(-0.25,0);
\draw (B) -- (E);
\draw (E) -- +(0.5,0);
\draw (E) -- +(-0.5,0);
\draw (C) -- +(0.25,0);
\draw (D) -- (F);
\draw (F) -- +(0.5,0);
\draw (F) -- +(-0.5,0);

\node (B) at (-1.5,2.8) {$\Gamma_{k}:$};
\end{tikzpicture}
\qquad
\begin{tikzpicture}[scale = 1]
\centering
\node () at (0.5,0.25){ \footnotesize $$};
\node () at (0.5,2.25){ \footnotesize $$};
\node () at (0.5,1.25){ \footnotesize $$};

\draw [dashed] (0,3) -- (1,3);
\draw [dashed] (1,0) -- (1,3) node[pos = 3/4, right] {\footnotesize$k+2$} node[pos = 1/4, right] {\footnotesize$k$} node[pos = 1/2, right] {\footnotesize$k+1$}; 
\draw [ dashed] (0,0) -- (1,0) ;
\draw [ dashed]  (0,0) -- (0,3) node[pos = 3/4, left] {\footnotesize$k+2$} node[pos = 1/4, left] {\footnotesize$k$} node[pos = 1/2, left] {\footnotesize$k+1$}; 
\node [draw,circle,color=black, fill=white,inner sep=0pt,minimum size=3pt, opacity = 1] (A) at (0.25,1.5) {};
\node [draw,circle,color=black, fill=black,inner sep=0pt,minimum size=3pt, opacity = 1] (B) at (0.5,1.75) {};
\node [draw,circle,color=black, fill=white,inner sep=0pt,minimum size=3pt, opacity = 1] (C) at (0.75,1.5) {};
\node [draw,circle,color=black, fill=black,inner sep=0pt,minimum size=3pt, opacity = 1] (D) at (0.5,1.25) {};
\node [draw,circle,color=black, fill=white,inner sep=0pt,minimum size=3pt, opacity = 1] (E) at (0.5,2.25) {};
\node [draw,circle,color=black, fill=black,inner sep=0pt,minimum size=3pt, opacity = 1] (F) at (0.5,0.75) {};
\draw [] (A) -- (B) -- (C) -- (D) -- (A);
\draw (A) -- +(-0.25,0);
\draw (B) -- (E);
\draw (E) -- +(0.5,0);
\draw (E) -- +(-0.5,0);
\draw (C) -- +(0.25,0);
\draw (D) -- (F);
\draw (F) -- +(0.5,0);
\draw (F) -- +(-0.5,0);

\node (B) at (-1.5,2.8) {$\Gamma_{-k}:$};
\end{tikzpicture}
\caption{Plabic graphs associated to the short root $ \alpha_k$ in type $B_k$.}\label{fig:graphrootsB}
\end{figure}

\begin{figure}[p]
\centering
\begin{tikzpicture}[scale = 1]
\centering
\node () at (0.5,0.25){ \footnotesize $$};
\node () at (0.5,2.25){ \footnotesize $$};
\node () at (0.5,1.25){ \footnotesize $$};

\draw [dashed] (0,2) -- (1,2);
\draw [dashed] (1,0) -- (1,2) node[pos = 1/3, right] {\footnotesize$k$} node[pos = 2/3, right] {\footnotesize$k+1$}; 
\draw [ dashed] (0,0) -- (1,0) ;
\draw [ dashed]  (0,0) -- (0,2) node[pos = 1/3, left] {\footnotesize$k$} node[pos = 2/3, left] {\footnotesize$k+1$}; 
\node [draw,circle,color=black, fill=black,inner sep=0pt,minimum size=3pt, opacity = 1] (E) at (0.5,4/3) {};
\node [draw,circle,color=black, fill=white,inner sep=0pt,minimum size=3pt, opacity = 1] (F) at (0.5,2/3) {};
\draw (E) -- +(0.5,0);
\draw (E) -- +(-0.5,0);
\draw (E) -- (F);
\draw (F) -- +(0.5,0);
\draw (F) -- +(-0.5,0);

\node (B) at (-1.5,1.8) {$\Gamma_{k}:$};
\end{tikzpicture}
\qquad
\begin{tikzpicture}[scale = 1]
\centering
\node () at (0.5,0.25){ \footnotesize $$};
\node () at (0.5,2.25){ \footnotesize $$};
\node () at (0.5,1.25){ \footnotesize $$};

\draw [dashed] (0,2) -- (1,2);
\draw [dashed] (1,0) -- (1,2) node[pos = 1/3, right] {\footnotesize$k$} node[pos = 2/3, right] {\footnotesize$k+1$}; 
\draw [ dashed] (0,0) -- (1,0) ;
\draw [ dashed]  (0,0) -- (0,2) node[pos = 1/3, left] {\footnotesize$k$} node[pos = 2/3, left] {\footnotesize$k+1$}; 
\node [draw,circle,color=black, fill=white,inner sep=0pt,minimum size=3pt, opacity = 1] (E) at (0.5,4/3) {};
\node [draw,circle,color=black, fill=black,inner sep=0pt,minimum size=3pt, opacity = 1] (F) at (0.5,2/3) {};
\draw (E) -- +(0.5,0);
\draw (E) -- +(-0.5,0);
\draw (E) -- (F);
\draw (F) -- +(0.5,0);
\draw (F) -- +(-0.5,0);

\node (B) at (-1.5,1.8) {$\Gamma_{-k}:$};
\end{tikzpicture}
\caption{Plabic graphs associated to the long root $ \alpha_k$ in type $C_k$.}\label{fig:graphrootsC}
\end{figure}

 \paragraph{Plabic graph parametrization of double Bruhat cells.} Now, let $G = \Sp_{2k}$ or $O_{2k+1}$ (note that although the latter group is disconnected, it still decomposes into double Bruhat cells, assuming we define the maximal torus and Borel subgroups as preimages of the corresponding subgroups in $\SO_{2k+1}$). In this section, we will give a parametrization of double Bruhat cells in $G$ by means of move-symmetric plabic graphs.

Label  the roots of $G$ so that the double edge in the corresponding type $B_k/C_k$ Dynkin diagram is between the nodes $k-1$ and $k$, see Figure \ref{fig:dynkin}. 
To every root $\alpha_i$, $i \in [1,k]$, we associate two move-symmetric plabic graph denoted by $\Gamma_{\pm i}$, so that there is a graph $\Gamma_j$ for any $j \in [1,k]  \cup [-k,-1]$. 
The graphs $\Gamma_{\pm i}$ associated to the roots $ \alpha_i$, $i < k$, are shown in Figure~\ref{fig:graphroots}. The graphs associated to the short root $\alpha_k$ in type $B_k$ are shown in Figure \ref{fig:graphrootsB} (the colors of vertices of the square face in the middle can be switched to the opposite ones; this does not affect the result below). Finally, the graphs associated to the long root $\alpha_k$ in type $C_k$ are shown in Figure \ref{fig:graphrootsC}. In all figures, all sources which are not explicitly shown are directly connected to the corresponding sinks.

Fix $(u,v) \in W \times W$ and some double reduced word $\word$  representing $(u,v)$. Replace every letter $i \in [1,k]  \cup [-k,-1]$ in that word with the associated graph  $\Gamma_{i}$ and take concatenation of those graphs in the order prescribed by the word $\word$. Denote the so-obtained move-symmetric plabic graph by $\Gamma_{\word}$. For the empty word $\word = \emptyset$, the corresponding graph $\Gamma_\emptyset$ is, by definition, the plabic graph with sources directly connected to sinks.

\begin{proposition}\label{dbc}
Let $G = G(\Omega_n)$ be the group $O_{2k+1}$ or $\Sp_{2k}$, and let $W$ be the corresponding Weyl group. 
Then, for any $(u,v) \in W \times W$ and any double reduced word $\word$  representing $(u,v)$, 
the boundary measurement mapping maps $ \F^{ms}(\Gamma_{\word})$ birationally onto $G^{u,v}$. 
\end{proposition}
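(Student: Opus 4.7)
The plan is to reduce the statement to the Fomin--Zelevinsky parametrization of double Bruhat cells by factorization coordinates \cite{fomin1999double}. That parametrization says: for any reduced double word $\word = i_1 \cdots i_m$ representing $(u,v) \in W \times W$, the product map
\begin{equation*}
T \times (\C^*)^m \longrightarrow G^{u,v}, \qquad (h, t_1, \dots, t_m) \longmapsto h \cdot x_{i_1}(t_1) \cdots x_{i_m}(t_m),
\end{equation*}
is a birational isomorphism, where $x_i(t) := \exp(t E_i)$ for $i>0$, $x_{-i}(t) := \exp(t F_i)$ for $i<0$, and $E_i, F_i$ are Chevalley generators associated with the simple root $\alpha_{|i|}$ of $G$. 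Our goal, therefore, is to recognize the boundary measurement map of $\Gamma_\word$ as this product map.

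The first step is to compute, for each symbol $i \in [1,k] \cup [-k,-1]$, the boundary measurement matrix of the elementary graph $\Gamma_i$ equipped with an arbitrary move-symmetric weighting. For $i < k$ the graph consists of two symmetric copies of the standard type $A$ ``crossing'' piece (cf.\ the type $A$ story, where such a gadget realizes the elementary unipotent $x_{\alpha_i}^A(t)$ in $\GL{n}$); the pair of symmetric crossings should yield exactly the symplectic/orthogonal Chevalley generator $x_{\alpha_i}(t)$, with $t$ read off from the (necessarily unique) nontrivial move-symmetric face weight. The short-root graph in Figure~\ref{fig:graphrootsB} and the long-root graph in Figure~\ref{fig:graphrootsC} require separate direct computation: one exponentiates the resulting $2 \times 2$ or $3 \times 3$ block on rows/columns $k, k+1 (, k+2)$ and verifies it coincides with $x_{\pm \alpha_k}(t)$ in $G(\Omega_n)$. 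Trivially weighted ``horizontal'' lines in each $\Gamma_{\pm i}$ contribute the identity to the boundary measurement matrix, so they do not affect this computation.

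The second step uses the homomorphism property of the boundary measurement map under concatenation: since $\Gamma_\word$ is by definition the concatenation $\Gamma_{i_1} * \cdots * \Gamma_{i_m}$, we get
\begin{equation*}
\mes(\mathcal Y) \;=\; h(\mathcal Y) \cdot x_{i_1}(t_1(\mathcal Y)) \cdots x_{i_m}(t_m(\mathcal Y)),
\end{equation*}
where $t_j(\mathcal Y)$ are the nontrivial face weights inside the $j$-th letter and $h(\mathcal Y) \in T$ collects the torus parameters. The torus factor comes from the move-symmetric face weights associated with the ``persistent'' horizontal strips that run through all of $\Gamma_\word$ and are not absorbed into any elementary factor. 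Under move-symmetry, the weight of a strip $i$ above the midline determines (up to combinatorial constants, as in Section~\ref{sec:pbc}) the weight of the symmetric strip $n+1-i$ below, so the torus parameters are indexed by strips $1, \dots, k$, giving exactly $\mathrm{rank}(G) = k$ parameters.

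The third step is a dimension and injectivity check. The total number of free parameters on $\F^{ms}(\Gamma_\word)$ is $m + k = \ell(u) + \ell(v) + \mathrm{rank}(G) = \dim G^{u,v}$, matching the Fomin--Zelevinsky count. Combining Steps 1--2 with the Fomin--Zelevinsky theorem gives a commuting triangle in which both the factorization map and the boundary measurement map land in $G^{u,v}$ and differ by a monomial reparametrization $(h, t_1, \dots, t_m) \leftrightarrow (h(\mathcal Y), t_1(\mathcal Y), \dots, t_m(\mathcal Y))$ of their common parameter space; birationality of this reparametrization, combined with the birationality of the Fomin--Zelevinsky map, will imply the proposition.

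The main obstacle is Step~1 for the short root in type $B_k$ and the long root in type $C_k$, i.e.\ the two ``exotic'' gadgets at the middle of the Dynkin diagram. The type $C_k$ gadget in Figure~\ref{fig:graphrootsC} has a single black--white edge crossing the midline, and one must verify by direct calculation that the associated $2 \times 2$ block on rows $k, k+1$ is the long-root Chevalley generator, with the correct square-root normalization dictated by the conventions of Section~\ref{sec:pbc} (recall that move-symmetric coordinates on the orthogonal case use $\sqrt{y_i y_{i'}}$). The type $B_k$ gadget in Figure~\ref{fig:graphrootsB} is more subtle still: the square face centered on the midline is the one that is modified by the $\sigma$-involution's square move, so the move-symmetric condition on its weights is a nontrivial constraint, and one must check that after imposing it the resulting $3 \times 3$ block is the short-root Chevalley generator of $O_{2k+1}$ (and not, for instance, its square). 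Once these two direct computations are in hand, the remainder of the argument is bookkeeping.
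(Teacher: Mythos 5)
Your proposal follows essentially the same route as the paper's proof: reduce to the Fomin--Zelevinsky factorization $\phi_\word(H,t_1,\dots,t_m)=H\,X_{i_1}(t_1)\cdots X_{i_m}(t_m)$ of $G^{u,v}$, realize each factor by an elementary weighted graph, invoke the homomorphism property of $\mes$ under concatenation, and close with the dimension count $\dim \F^{ms}(\Gamma_\word)=m+k$. Two remarks. First, an organizational difference: rather than extracting $h(\mathcal Y),t_j(\mathcal Y)$ from a given weighting (which is slightly awkward, since concatenation multiplies face weights across glued boundaries, so the $t_j$ are not literally ``the nontrivial face weights inside the $j$-th letter''), the paper runs the argument in the opposite direction. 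It defines monomial maps $\psi_i\colon \C^*\to\F^{ms}(\Gamma_i)$ and $\psi_0\colon T\to\F^{ms}(\Gamma_\emptyset)$ with $\mes(\psi_i(t))=X_i(t)$ and $\mes(\psi_0(H))=H$, so that $\phi_\word=\mes\circ\psi_\word$; injectivity of $\psi_\word$ is then automatic from birationality of $\phi_\word$ together with $\psi_\word$ being monomial, and the dimension count upgrades injectivity to birationality. Your version works too once the monomial reparametrization is shown to be birational, which is what your Step~3 asserts. Second, a genuine omission in type $B$: the group $O_{2k+1}$ is disconnected, its maximal torus has entries $x_1,\dots,x_k,\pm1,x_k^{-1},\dots,x_1^{-1}$, and correspondingly both $G^{u,v}$ (via $\operatorname{sign}\det$) and $\F^{ms}(\Gamma_\word)$ (via the sign of $\prod\sqrt{y_iy_{i'}}=\pm1$) split into two components; the birationality claim must be checked component by component, and your sketch as written would only account for the identity component. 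Your worries about the two exotic gadgets are well placed and are resolved exactly as you anticipate: the paper normalizes $E_k=\sqrt2\,(E_{k,k+1}+E_{k+1,k+2})$ in type $B_k$ and assigns weights $t/\sqrt2$ and $t\sqrt2$ to the two vertical edges of $\Gamma_{\pm k}$ (Figure~\ref{fig:graphrootsB2}), which makes $\mes(\psi_{\pm k}(t))=X_{\pm k}(t)$ on the nose rather than its square.
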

\begin{proof}
This is a plabic graph counterpart of parametrization of $G^{u,v}$ by means of factorization coordinates. Recall that the latter is constructed as follows. Let $T \subset G$ be a maximal torus, $E_i, F_i$ be the corresponding Chevalley generators of the Lie algebra of $G$,  and let \begin{equation}\label{eq:x}X_i(t) := \exp(t E_i), \quad X_{-i}(t) := \exp(t F_i).\end{equation} 
Suppose $\word = i_1\dots i_{m}$ is a double reduced word representing  $(u,v) \in W\times W$. Define a map 
\begin{equation}\label{eq:phimap}
\phi_\word \colon T \times (\C^*)^{m} \to G, \quad \phi_\word(H, t_1, \dots, t_{m}) := H X_{i_1}(t_1) \cdots X_{i_{m}}( t_{m}).
\end{equation}
This map is birational, and the corresponding coordinates on (an open dense subset of) $G^{u,v}$ are known as \emph{factorization parameters} \cite{fomin1999double}.

Now return to the setting of plabic graphs. Consider the $C_k$ case. In our realization of $\Sp_{2k}$, Chevalley generators are
\begin{align}\label{cb1}
\begin{aligned}
E_i &= \begin{cases}
E_{i, i+1} + E_{2k-i, 2k-i + 1} \quad \mbox{for } i < k,\\
E_{k, k+1} \quad  \mbox{for } i = k,
\end{cases}\\
&F_i = E_i^t \quad \mbox{for all } i,
\end{aligned}
\end{align}
while the maximal torus is given by diagonal matrices with entries $x_1, \dots, x_k, x_k^{-1}, \dots, x_1^{-1}$, where $x_i \in \C^*$. For every $i \in [1,k]  \cup [-k,-1]$, define a mapping $\psi_i \colon \C^* \to \F^{ms}(\Gamma_i) $ as follows. 
The graph $\Gamma_i$ has a unique perfect orientation, turning it into a perfect network. Attach weight $t$ to vertical edges of that network (there are two such edges for $i \neq k$, and one such edge for $i = k$), and weight $1$ to the remaining edges. The corresponding face weight collection $\mathcal Y \in  \F^{ms}(\Gamma_i) $ is, by definition, $\psi_i(t)$. 
Further, define a map $\psi_0 \colon T \to \F^{ms}(\Gamma_\emptyset) $ by assigning to $H \in T$ the face weights of a network whose sources are directly connected to sinks and whose edge weights are given by diagonal entries of $H$, going from bottom to top. Note that $\mes( \psi_0(H)) = H$ and $\mes( \psi_i(t)) = X_i(t)$.
\begin{example}
\begin{figure}[t]
\centering
\begin{tikzpicture}[scale = 1.2]
\centering
\node () at (0.5,0.25){ \footnotesize $$};
\node () at (0.5,2.25){ \footnotesize $$};
\node () at (0.5,1.25){ \footnotesize $$};

\draw [dashed] (0,2) -- (1,2);
\draw [dashed] (1,0) -- (1,2) node[pos = 1/3, right] {\footnotesize$k$} node[pos = 2/3, right] {\footnotesize$k+1$}; 
\draw [ dashed] (0,0) -- (1,0) ;
\draw [ dashed]  (0,0) -- (0,2) node[pos = 1/3, left] {\footnotesize$k$} node[pos = 2/3, left] {\footnotesize$k+1$}; 
\node [draw,circle,color=black, fill=black,inner sep=0pt,minimum size=3pt, opacity = 1] (E) at (0.5,4/3) {};
\node [draw,circle,color=black, fill=white,inner sep=0pt,minimum size=3pt, opacity = 1] (F) at (0.5,2/3) {};
\draw [->] (E) -- +(0.5,0);
\draw [->] (0,4/3) -- (E);
\draw [->] (F) -- (E) node[pos = 1/2, right] {\footnotesize $t$};
\draw [->] (F) -- +(0.5,0);
\draw [->] (0,2/3) -- (F);

\end{tikzpicture}
\caption{Network $\psi_k(t)$.  All sources which are not explicitly shown are directly connected to the corresponding sinks. All omitted edge weights are equal to $1$. }\label{fig:psikex}
\end{figure}
For instance, let us show that
$\mes( \psi_k(t)) = X_k(t)$. 
By construction, the network $\psi_k(t)$ is a perfectly oriented version of the graph $\Gamma_k$ with weight $t$ on the vertical edge and weight $1$ on the horizontal ones, see Figure \ref{fig:psikex}. This network has a path of weight $1$ from each source to the corresponding sink, a path of weight $t$ from source $k$ to sink $k+1$, and no other paths. Denoting the identity matrix by $\Id$, we find that the boundary measurement matrix $\mes( \psi_k(t)) $  is
$
\Id + tE_{k,k+1} = \exp(tE_{k,k+1}) = X_k(t),
$
q.e.d.
\end{example}

Back to the proof, consider the map $$\psi_\word \colon T \times (\C^*)^{m} \to  \F^{ms}(\Gamma_\word), \quad \psi_\word(H, t_1, \dots, t_{m}) := \psi_0(H) \psi_{i_1}(t_1) \cdots \psi_{i_{m}}( t_{m}),
$$
where multiplication of weightings in the right-hand side is understood as concatenation of the corresponding weighted graphs. Since the boundary measurement map is a homomorphism of monoids, we have
\begin{gather*}
\mes(\psi_\word(H, t_1, \dots, t_{m}) ) = \mes( \psi_0(H) \psi_{i_1}(t_1) \cdots \psi_{i_{m}}( t_{m})) \\ = \mes( \psi_0(H))\mes( \psi_{i_1}(t_1)) \cdots \mes(\psi_{i_{m}}( t_{m}))   =  H X_{i_1}(t_1) \cdots X_{i_{m}}( t_{m}) = \phi_\word(H, t_1, \dots, t_{m}),
\end{gather*}
i.e., 
$
\phi_\word = \mes \circ \psi_\word.
$


Now, let us show that $\dim \F^{ms}(\Gamma_\word) = m+k$.
To that end, note that the right boundary of each face of $\Gamma_\word$ is either a vertical edge or a piece of the right boundary of the enclosing rectangle. At the same time, each of the $m$ graphs $\Gamma_i$ used to create the graph $\Gamma_\word$ has exactly one vertical edge on or above the midline. So, the total number of such vertical edges in  $\Gamma_\word$ is $m$. In addition to that, the right boundary of the rectangle enclosing $\Gamma_\word$ has $k+1$ pieces on or above the midline. So, $\Gamma_\word$  has $m+k+1$ faces  on or above the midline, and hence $\dim \F^{ms}(\Gamma_\word) = m+k$.

As a result, $\psi_\word$ is a monomial map between tori of the same dimension. Also, since the map $\phi_\word = \mes \circ \psi_\word$ is generically injective, the same must be true for $\psi_\word$. But a generically injective monomial map between tori of the same dimension is birational. So, the boundary measurement map can be expressed as $\mes = \phi_\word \circ \psi_\word^{-1}$ and hence maps $ \F^{ms}(\Gamma_{\word})$ birationally onto $G^{u,v}$.

The argument in the $B_k$ case is similar, with the following modifications. The Chevalley generators are
\begin{align}\label{cb2}
\begin{aligned}
E_i &= \begin{cases}
E_{i, i+1} + E_{2k-i + 1, 2k-i + 2} \quad \mbox{for } i < k,\\
\sqrt{2} (E_{k, k+1} + E_{k+1, k+2}) \quad  \mbox{for } i = k,
\end{cases}\\
&F_i = E_i^t \quad \mbox{for all } i,
\end{aligned}
\end{align}
while the Cartan subgroup $T$ consists of diagonal matrices with entries $x_1, \dots, x_k, \pm 1, x_k^{-1}, \dots, x_1^{-1}$. 
\begin{figure}
\centering
\begin{tikzpicture}[scale = 1.25]
\centering
\node () at (0.5,0.25){ \footnotesize $$};
\node () at (0.5,2.25){ \footnotesize $$};
\node () at (0.5,1.25){ \footnotesize $$};

\draw [dashed] (-0.25,3) -- (1.25,3);
\draw [dashed] (1.25,0) -- (1.25,3) node[pos = 5/6, right] {\footnotesize$k+2$} node[pos = 1/6, right] {\footnotesize$k$} node[pos = 1/2, right] {\footnotesize$k+1$}; 
\draw [ dashed] (-0.25,0) -- (1.25,0) ;
\draw [ dashed]  (-0.25,0) -- (-0.25,3)  node[pos = 5/6, left] {\footnotesize$k+2$} node[pos = 1/6, left] {\footnotesize$k$} node[pos = 1/2, left] {\footnotesize$k+1$} ; 
\node [draw,circle,color=black, fill=white,inner sep=0pt,minimum size=3pt, opacity = 1] (A) at (0.15,1.5) {};
\node [draw,circle,color=black, fill=black,inner sep=0pt,minimum size=3pt, opacity = 1] (B) at (0.5,1.85) {};
\node [draw,circle,color=black, fill=white,inner sep=0pt,minimum size=3pt, opacity = 1] (C) at (0.85,1.5) {};
\node [draw,circle,color=black, fill=black,inner sep=0pt,minimum size=3pt, opacity = 1] (D) at (0.5,1.15) {};
\node [draw,circle,color=black, fill=black,inner sep=0pt,minimum size=3pt, opacity = 1] (E) at (0.5,2.5) {};
\node [draw,circle,color=black, fill=white,inner sep=0pt,minimum size=3pt, opacity = 1] (F) at (0.5,0.5) {};
\draw [->] (A) -- (B);
\draw [->] (A) -- (D);
\draw [->] (C) -- (B);
\draw [->] (D) -- (C);

\draw [->] (-0.25,1.5) -- (A);
\draw [->] (B) -- (E) node[midway, right] {\!\scriptsize$ t/\sqrt{2}$};;
\draw [->] (E) -- +(0.75,0);
\draw [->] (-0.25, 2.5) -- (E);
\draw [->] (C) -- +(0.4,0);
\draw [->] (F) -- (D) node[midway, right] {\!\scriptsize$ t\sqrt{2}$};;
\draw [->] (F) -- +(0.75,0);
\draw [->] (-0.25,0.5) -- (F);

\node (B) at (-1.5,2.8) {$\psi_{k}:$};
\end{tikzpicture}
\qquad
\begin{tikzpicture}[scale = 1.25]
\centering
\node () at (0.5,0.25){ \footnotesize $$};
\node () at (0.5,2.25){ \footnotesize $$};
\node () at (0.5,1.25){ \footnotesize $$};

\draw [dashed] (-0.25,3) -- (1.25,3);
\draw [dashed] (1.25,0) -- (1.25,3) node[pos = 5/6, right] {\footnotesize$k+2$} node[pos = 1/6, right] {\footnotesize$k$} node[pos = 1/2, right] {\footnotesize$k+1$}; 
\draw [ dashed] (-0.25,0) -- (1.25,0) ;
\draw [ dashed]  (-0.25,0) -- (-0.25,3)  node[pos = 5/6, left] {\footnotesize$k+2$} node[pos = 1/6, left] {\footnotesize$k$} node[pos = 1/2, left] {\footnotesize$k+1$} ; 
\node [draw,circle,color=black, fill=white,inner sep=0pt,minimum size=3pt, opacity = 1] (A) at (0.15,1.5) {};
\node [draw,circle,color=black, fill=black,inner sep=0pt,minimum size=3pt, opacity = 1] (B) at (0.5,1.85) {};
\node [draw,circle,color=black, fill=white,inner sep=0pt,minimum size=3pt, opacity = 1] (C) at (0.85,1.5) {};
\node [draw,circle,color=black, fill=black,inner sep=0pt,minimum size=3pt, opacity = 1] (D) at (0.5,1.15) {};
\node [draw,circle,color=black, fill=white,inner sep=0pt,minimum size=3pt, opacity = 1] (E) at (0.5,2.5) {};
\node [draw,circle,color=black, fill=black,inner sep=0pt,minimum size=3pt, opacity = 1] (F) at (0.5,0.5) {};
\draw [->] (A) -- (B);
\draw [->] (A) -- (D);
\draw [->] (B) -- (C);
\draw [->] (C) -- (D);

\draw [->] (-0.25,1.5) -- (A);
\draw [->] (E) -- (B) node[midway, right] {\!\scriptsize$ t/\sqrt{2}$};;
\draw [->] (E) -- +(0.75,0);
\draw [->] (-0.25, 2.5) -- (E);
\draw [->] (C) -- +(0.4,0);
\draw [->] (D) -- (F) node[midway, right] {\!\scriptsize$ t\sqrt{2}$};;
\draw [->] (F) -- +(0.75,0);
\draw [->] (-0.25,0.5) -- (F);

\node (B) at (-1.5,2.8) {$\psi_{-k}:$};
\end{tikzpicture}
\caption{Maps $\psi_{\pm k} \colon \C^* \to \F^{ms}(\Gamma_{\pm k}) $ corresponding to the short root in type $B_k$.}\label{fig:graphrootsB2}
\end{figure}
The maps $\psi_{\pm k} \colon \C^* \to \F^{ms}(\Gamma_{\pm k}) $ corresponding to the short root are defined as shown in Figure \ref{fig:graphrootsB2}. The space $ \F^{ms}(\Gamma_\word) $ consists of two components $ \F^{ms}_\pm(\Gamma_\word) $ distinguished by the sign of the product of coordinates. The double Bruhat cell $G^{u,v}$ also consists of two components $G^{u,v}_\pm$ distinguished by the sign of the determinant. The same argument as in type $C$ shows that the map $\mes \colon  \F^{ms}_+(\Gamma_\word) \to G^{u,v}_+$ is birational, and so is the map $\mes \colon  \F^{ms}_-(\Gamma_\word) \to G^{u,v}_-$.\qedhere 

\end{proof}
\paragraph{Face weights as cluster coordinates.} Consider a double Bruhat cell $G^{u,v}$ in a \emph{simple} complex Lie group $G$. To every reduced double word $\word$ representing $(u,v)$, Fock and Goncharov associate a birational chart on the double Bruhat cell  $G^{u,v}$ \cite{fock2006cluster}. Those charts define a \emph{cluster structure} on $G^{u,v}$ in the sense that transition maps between charts corresponding to different double word representations of $(u,v)$ are $Y$-type cluster transformations. In this section, we will show that Fock-Goncharov coordinates associated with a double word $\word$ coincide with face weights of the corresponding move-symmetric plabic graph $\Gamma_\word$.

The construction of the chart associated with a double word $\word = i_1\dots i_m$ is as follows. 
Let $E_i, F_i$, $i \in [1,k]$, be Chevalley generators of the Lie algebra $\g$ of $G$. Let also $H_{i}$, $i \in [1,k]$, be the basis in the corresponding Cartan subalgebra $\mathfrak t \subset \g$ dual to the basis $\alpha_i \in \mathfrak t^* $ of simple roots. These $H_i$ are known as \emph{fundamental coweights}. They give rise to \emph{cocharacters} $Y_i(t) := \exp(H^i\log t)$, $i \in [1,k]$.
Let also $X_i :=   \exp(  E_i)$, $X_{-i}:=   \exp( F_i)$; in terms of \eqref{eq:x}, we have $X_i := X_i(1)$ for all $i \in [1,k] \cup [-k,-1]$. 
Define a map $(\C^*)^{m+k} \to G$ as follows. For any $i \in [1,k]$, let $m_i$ be the total number of occurrences of $\pm i$ in the word~$\word$. Consider the expression $X_{i_1}\cdots X_{i_m}$. Insert in this product $m_1 + 1$ instances of $Y_1$,  $m_2 + 1$ instances of $Y_2$, etc., according to the following rule: between any two instances of the same symbol $Y_i$ there is either exactly one $X_i$, or exactly one $X_{-i}$, but not both. Each of the $Y$ symbols depends on its own variable $t_j$, so that there are $\sum{(m_i + 1)} = m + k$ variables altogether. The so-obtained product is independent on the exact positions of $Y$'s and gives a birational chart on $G^{u,v}$.

At the same time, one can construct a birational chart on  $G^{u,v}$ associated with a double word $\word$ as follows. Denote by $\tilde \F^{ms}(\Gamma_\word)$ the space of projective move-symmetric weightings on $\Gamma_\word$, i.e., weightings with weights assigned to all faces except the upper- and lowermost one. Proposition \ref{dbc} implies that the boundary measurement mapping on $\tilde \F^{ms}(\Gamma_\word)$ is birational onto $G^{u,v}$ where $G = \PSp_{2k}$ or $\SO_{2k+1}$ is the adjoint group of $G(\Omega_n)$. Therefore, on $G^{u,v}$, one has birational coordinates given by face weights. More precisely, the coordinates in the $C$ case are the weights of all faces above or on the midline except the uppermost one, while in the $B$ case one takes geometric means of the form $\sqrt{y_iy_{i'}}$ where $i$ is any face above the midline but not the uppermost one, cf. Section~\ref{sec:pbc}.

\begin{proposition}\label{prop:fg}
The boundary measurement map $\tilde \F^{ms}(\Gamma_\word) \to G^{u,v}$ identifies the so-defined face weight coordinates and Fock-Goncharov coordinates associated with the double word $\word$. 
\end{proposition}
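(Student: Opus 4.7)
The plan is to realize both coordinate systems as labels of a single factorization in the Poisson-Lie monoid $\Rect_n^{ms}$, and then match them slot by slot. The key enabling observation is that each Fock-Goncharov cocharacter $Y_i(t) = \exp(H^i \log t)$ is a diagonal element of the maximal torus $T$, hence equals the boundary measurement matrix of a trivial move-symmetric plabic graph $T_i(t)$ whose $j$-th horizontal edge is weighted by the $j$-th diagonal entry of $Y_i(t)$. Thus the Fock-Goncharov product can be rewritten as a concatenation in $\Rect_n^{ms}$: each $X_{\pm i_\ell}$ becomes the graph $\Gamma_{\pm i_\ell}$ with unit weights as in the proof of Proposition~\ref{dbc}, and each inserted cocharacter becomes a trivial graph $T_?(t_?)$.

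Next, I would apply gauge transformations to absorb the horizontal-edge weights of each $T_?(t_?)$ into the adjacent $\Gamma_{\pm i_\ell}$, so that the enlarged graph collapses back to $\Gamma_\word$ itself, now carrying face weights. During this absorption, each parameter $t_j$ migrates to a single face of $\Gamma_\word$, namely the face located at the slot where $Y_?(t_j)$ was inserted and at the horizontal level associated to the index of that cocharacter. The move-symmetry of the resulting weighting automatically fixes the weight of the mirror face below the midline. In type~$C$, the migrated weight coincides with one of the face weight coordinates on $\tilde \F^{ms}(\Gamma_\word)$ directly. In type~$B$, since $Y_i(t)$ contributes symmetrically to both $y_i$ and $y_{i'}$, the geometric mean $\sqrt{y_i y_{i'}}$ from Section~\ref{sec:pbc} is what matches $t_j$.

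The main technical obstacle will be careful bookkeeping for the short root $\alpha_k$ in type $B_k$, where the graph $\Gamma_{\pm k}$ contains a square face straddling the midline and the Chevalley generators $E_k, F_k$ carry factors of $\sqrt{2}$ by \eqref{cb2}. One must verify that these $\sqrt{2}$ factors are exactly what is needed to align the symmetrization rule $\sqrt{y_i y_{i'}} := \sqrt{c_i}\, y_i$ with the corresponding cocharacter. A closely related check is that the initial torus factor $\psi_0(H)$ of Proposition~\ref{dbc} produces face weights compatible with the leftmost cocharacters $Y_i(t_?)$ of the Fock-Goncharov chart, so that the rank-$k$ torus part of both parametrizations is identified.

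Finally, both assignments of coordinates are monomial in the parameters $(t_1, \dots, t_{m+k})$ and each is a birational system on a variety of the same dimension $m+k$, so the monomial transition map between them has integer exponent matrix with integer inverse, hence is a signed permutation of monomials. The explicit slot-by-slot matching described above pins down this permutation as the natural one, yielding the claimed identification and, as an immediate corollary, that moves on $\Gamma_\word$ corresponding to different reduced word representatives of $(u,v)$ induce on the face weights precisely the $Y$-type cluster mutations of Fock-Goncharov.
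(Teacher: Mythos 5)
Your proposal follows essentially the same route as the paper's proof: both realize the Fock--Goncharov product graphically as a concatenation of weighted plabic graphs, with each cocharacter $Y_i(t)$ represented by a trivial (identity) graph carrying weight $t$ on the appropriate faces and each $X_{\pm i}$ by a weighted version of $\Gamma_{\pm i}$, after which the coordinates are matched slot by slot, using the spacing rule that between two consecutive $Y_i$'s there is exactly one $X_{\pm i}$ so that distinct parameters never land on the same face. Two caveats are worth recording. First, in type $B$ the graph representing $X_{\pm k}$ does \emph{not} carry unit face weights: the paper assigns weights $\sqrt{2}$ and $1/\sqrt{2}$ to the faces adjacent to the midline square (Figure \ref{fig:fgprop}), chosen exactly so that the geometric means $\sqrt{y_iy_{i'}}$ of Section \ref{sec:pbc} equal $1$; so your phrase ``unit weights as in Proposition \ref{dbc}'' is imprecise there, although you correctly flag the $\sqrt{2}$ bookkeeping as the point requiring care. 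Second, the closing inference that a monomial birational map with unimodular exponent matrix must be a signed permutation of monomials is false (e.g.\ $(t_1,t_2)\mapsto(t_1t_2,\,t_2)$ is monomial and birational but permutes nothing); this does no damage, since the explicit slot-by-slot matching is what actually carries the proof, but that paragraph should be dropped rather than relied upon.
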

\begin{proof}
\begin{figure}[t]
\centering
\begin{tikzpicture}[scale = 1]
\centering
\node () at (0.5,0.75){ \footnotesize $t$};
\node () at (0.5,2.25){ \footnotesize $t$};

\draw [dashed] (0,3) -- (1,3);
\draw [dashed] (1,0) -- (1,3) node[pos = 1/6, right] {\footnotesize$i$} node[pos = 1/3, right] {\footnotesize$i+1$} node[pos = 5/6, right] {\footnotesize$n+1 - i$} node[pos = 2/3, right] {\footnotesize$n-i$};;; ;
\draw [ dashed] (0,0) -- (1,0) ;
\draw [ dashed]  (0,0) -- (0,3) node[pos = 1/6, left] {\footnotesize$i$} node[pos = 1/3, left] {\footnotesize$i+1$} node[pos = 5/6, left] {\footnotesize$n+1 - i$} node[pos = 2/3, left] {\footnotesize$n-i$};;; ;;
\draw (0,2.5) -- (1,2.5);
\draw (0,2)  -- (1,2);
\draw (0,1)  -- (1,1);
\draw (0,0.5)  -- (1,0.5);
\end{tikzpicture}
\caption{Plabic graphs representing fundamental cocharacters in type $B_k$. Here $n = 2k+1$ is the dimension of the defining representation.}\label{fig:graphfund}
\end{figure}
\begin{figure}
\centering
\begin{tikzpicture}[xscale = 0.5, yscale = -0.5]

\draw [dashed](-1,3) -- (3,3);
\draw [dashed](-1,-3) -- (3,-3);
\draw [dashed] (3,-3) -- (3,3) node[pos = 5/6, right] {\footnotesize$k$} node[pos = 1/6, right] {\footnotesize$k+2$} node[pos = 1/2, right] {\footnotesize$k+1$}; 
\draw [ dashed]  (-1,-3) -- (-1,3)  node[pos = 5/6, left] {\footnotesize$k$} node[pos = 1/6, left] {\footnotesize$k+2$} node[pos = 1/2, left] {\footnotesize$k+1$} ; 
\node [draw,circle,color=black, fill=white,inner sep=0pt,minimum size=3pt, opacity = 1] (A) at (0.3,0) {};
\node [draw,circle,color=black, fill=black,inner sep=0pt,minimum size=3pt, opacity = 1] (B) at (1,0.7) {};
\node [draw,circle,color=black, fill=white,inner sep=0pt,minimum size=3pt, opacity = 1] (C) at (1.7,0) {};
\node [draw,circle,color=black, fill=black,inner sep=0pt,minimum size=3pt, opacity = 1] (D) at (1,-0.7) {};
\node [draw,circle,color=black, fill=white,inner sep=0pt,minimum size=3pt, opacity = 1] (E) at (1,2) {};
\node [draw,circle,color=black, fill=black,inner sep=0pt,minimum size=3pt, opacity = 1] (F) at (1,-2) {};
\node () at (0,1) {\footnotesize$\footnotesize\sqrt{2}$};
\node () at (0,-1) {\footnotesize$1/\sqrt{2}$};
\node () at (2,1) {\footnotesize$1/\sqrt{2}$};
\node () at (2,-1) {\footnotesize$\sqrt{2}$};
\node () at (1,2.5) {\footnotesize$$};
\node () at (1,-2.5) {\footnotesize$$};
\draw [] (A) -- (B) -- (C) -- (D) -- (A);
\draw (A) -- +(-1.3,0);
\draw (B) -- (E);
\draw (E) -- +(2,0);
\draw (E) -- +(-2,0);
\draw (C) -- +(1.3,0);
\draw (D) -- (F);
\draw (F) -- +(2,0);
\draw (F) -- +(-2,0);

\end{tikzpicture}
\qquad\qquad
\begin{tikzpicture}[xscale = 0.5, yscale = 0.5]

\draw [dashed](-1,3) -- (3,3);
\draw [dashed](-1,-3) -- (3,-3);
\draw [dashed] (3,-3) -- (3,3) node[pos = 5/6, right] {\footnotesize$k+2$} node[pos = 1/6, right] {\footnotesize$k$} node[pos = 1/2, right] {\footnotesize$k+1$}; 
\draw [ dashed]  (-1,-3) -- (-1,3)  node[pos = 5/6, left] {\footnotesize$k+2$} node[pos = 1/6, left] {\footnotesize$k$} node[pos = 1/2, left] {\footnotesize$k+1$} ; 
\node [draw,circle,color=black, fill=white,inner sep=0pt,minimum size=3pt, opacity = 1] (A) at (0.3,0) {};
\node [draw,circle,color=black, fill=black,inner sep=0pt,minimum size=3pt, opacity = 1] (B) at (1,0.7) {};
\node [draw,circle,color=black, fill=white,inner sep=0pt,minimum size=3pt, opacity = 1] (C) at (1.7,0) {};
\node [draw,circle,color=black, fill=black,inner sep=0pt,minimum size=3pt, opacity = 1] (D) at (1,-0.7) {};
\node [draw,circle,color=black, fill=white,inner sep=0pt,minimum size=3pt, opacity = 1] (E) at (1,2) {};
\node [draw,circle,color=black, fill=black,inner sep=0pt,minimum size=3pt, opacity = 1] (F) at (1,-2) {};
\node () at (0,1) {\footnotesize$\sqrt{2}$};
\node () at (0,-1) {\footnotesize$1/\sqrt{2}$};
\node () at (2,1) {\footnotesize$1/\sqrt{2}$};
\node () at (2,-1) {\footnotesize$\sqrt{2}$};
\node () at (1,2.5) {\footnotesize$$};
\node () at (1,-2.5) {\footnotesize$$};
\draw [] (A) -- (B) -- (C) -- (D) -- (A);
\draw (A) -- +(-1.3,0);
\draw (B) -- (E);
\draw (E) -- +(2,0);
\draw (E) -- +(-2,0);
\draw (C) -- +(1.3,0);
\draw (D) -- (F);
\draw (F) -- +(2,0);
\draw (F) -- +(-2,0);

\end{tikzpicture}
\caption{Graphs associated to $X_k := \exp(E_k)$ and $X_{-k} := \exp(F_k)$, where $E_k, F_k$ are Chevalley generators corresponding to the short root in type $B_k$.}\label{fig:fgprop}
\end{figure}

In type $B$ case, the fundamental coweights are
$
H_i = \sum_{j = 1}^i (E_{j,j} - E_{2k + 2 - j, 2k + 2 - j}),  i \in [1,k].
$
Figure~\ref{fig:graphfund} shows plabic graphs corresponding to the associated cocharacters $Y_i(t) = \exp(H^i\log t)$. Here all sources are directly connected to the corresponding sinks, and all face weights which are not explicitly shown are equal to $1$, except for the weights of the upper- and lowermost faces which we disregard. The graphs corresponding to $X_{\pm i}$ are as in Figure \ref{fig:graphroots} with all face weights set to $1$, except for the case $i = k$ when the graphs are as in Figure~\ref{fig:fgprop} (as usual, all sources which are not explicitly shown are directly connected to the corresponding sinks, and all face weights which are not explicitly shown are equal to $1$). To find a weighting $\mathcal Y \in \tilde \F^{ms}(\Gamma_\word) $ associated with a given collection of Fock-Goncharov coordinates, we need to glue the graphs corresponding to $X_{\pm i}$ and $Y_i(t_j)$ in the same order as we multiply $X_{\pm i}$ and $Y_i(t_j)$. The unweighted graph obtained by such gluing is precisely $\Gamma_\word$, because the graphs corresponding to $X_{\pm i}$ are just weighted versions of $\Gamma_{\pm i}$, while the graphs corresponding to $Y_i$ are weighted versions of the identity element in the monoid of plabic graphs. Note that since between any two $Y_i$ there is either exactly one $X_i$ or $X_{-i}$, faces whose weights are given by different $t_j$ are never glued to each other. Furthermore, the geometric means $\sqrt{y_i y_{i'}}$ for the graphs corresponding to $X_{\pm i}$ are equal to $1$, so, upon gluing all the graphs together, we obtain a weighting with geometric means equal to $t_j$, as desired.

The argument in the $C$ case is analogous. In that case, the fundamental coweights are
$ H_i = \sum_{j = 1}^i (E_{j,j} - E_{2k + 1 - j, 2k + 1 - j}), i \in [1,k-1]$,  
$H_k = \frac{1}{2} \sum_{j = 1}^k (E_{j,j} - E_{2k + 1 - j, 2k + 1 - j}).$ 
\end{proof}

\begin{example}\label{fgex}

\begin{figure}[t]
\centering
\begin{tikzpicture}[scale = 1]
\centering
\node () at (0,0) {
\begin{tikzpicture}[xscale = 1, yscale = 2]
\node () at (0.5,0.75){ \footnotesize $t_1$};
\node () at (0.5,2.25){ \footnotesize $t_1$};
\node () at (0.5,1.25){ \footnotesize $$};
\node () at (0.5,1.75){ \footnotesize $$};

\draw [dashed] (0,2.75) -- (1,2.75);
\draw [dashed] (1,0.25) -- (1,2.75) ; 
\draw [ dashed] (0,0.25) -- (1,0.25) ;
\draw [ dashed]  (0,0.25) -- (0,2.75) ; 
\draw (0,2.5) -- (1,2.5);
\draw (0,2)  -- (1,2);
\draw (0,1)  -- (1,1);
\draw (0,1.5)  -- (1,1.5);
\draw (0,0.5)  -- (1,0.5);
\end{tikzpicture}
};

\node () at (1.5,0) {
\begin{tikzpicture}[xscale = 1, yscale = 2]
\centering
\node () at (0.5,0.25){ \footnotesize $$};
\node () at (0.5,2.25){ \footnotesize $$};
\node () at (0.5,1.25){ \footnotesize $$};

\draw [dashed] (0,2.75) -- (1,2.75);
\draw [dashed] (1,0.25) -- (1,2.75) ; 
\draw [ dashed] (0,0.25) -- (1,0.25) ;
\draw [ dashed]  (0,0.25) -- (0,2.75) ; 
\node [draw,circle,color=black, fill=white,inner sep=0pt,minimum size=3pt, opacity = 1] (A) at (0.5,2.5) {};
\node [draw,circle,color=black, fill=black,inner sep=0pt,minimum size=3pt, opacity = 1] (B) at (0.5,2) {};
\draw (A) -- (B);
\draw (0,2.5) -- (A) -- (1,2.5);
\draw (0,2) -- (B) -- (1,2);
\draw (0,1.5)  -- (1,1.5);
\node [draw,circle,color=black, fill=white,inner sep=0pt,minimum size=3pt, opacity = 1] (A) at (0.5,1) {};
\node [draw,circle,color=black, fill=black,inner sep=0pt,minimum size=3pt, opacity = 1] (B) at (0.5,0.5) {};
\draw (A) -- (B);
\draw (0,1) -- (A) -- (1,1);
\draw (0,0.5) -- (B) -- (1,0.5);
\end{tikzpicture}
};

\node () at (3,0) {
\begin{tikzpicture}[xscale = 1, yscale = 2]
\node () at (0.5,0.75){ \footnotesize $t_2$};
\node () at (0.5,2.25){ \footnotesize $t_2$};
\node () at (0.5,1.25){ \footnotesize $$};
\node () at (0.5,1.75){ \footnotesize $$};

\draw [dashed] (0,2.75) -- (1,2.75);
\draw [dashed] (1,0.25) -- (1,2.75) ; 
\draw [ dashed] (0,0.25) -- (1,0.25) ;
\draw [ dashed]  (0,0.25) -- (0,2.75) ; 
\draw (0,2.5) -- (1,2.5);
\draw (0,2)  -- (1,2);
\draw (0,1)  -- (1,1);
\draw (0,1.5)  -- (1,1.5);
\draw (0,0.5)  -- (1,0.5);
\end{tikzpicture}
};

\node () at (4.5,0) {
\begin{tikzpicture}[xscale = 1, yscale = 2]
\node () at (0.5,1.25){ \footnotesize $t_3$};
\node () at (0.5,1.75){ \footnotesize $t_3$};
\node () at (0.5,1.25){ \footnotesize $$};
\node () at (0.5,1.75){ \footnotesize $$};
\draw [dashed] (0,2.75) -- (1,2.75);
\draw [dashed] (1,0.25) -- (1,2.75) ; 
\draw [ dashed] (0,0.25) -- (1,0.25) ;
\draw [ dashed]  (0,0.25) -- (0,2.75) ; 
\draw (0,2.5) -- (1,2.5);
\draw (0,2)  -- (1,2);
\draw (0,1)  -- (1,1);
\draw (0,1.5)  -- (1,1.5);
\draw (0,0.5)  -- (1,0.5);
\end{tikzpicture}
};

\node () at (6.5,0) {
\begin{tikzpicture}[xscale = 0.5, yscale = -0.5]

\draw [dashed](-1,5) -- (3,5);
\draw [dashed](-1,-5) -- (3,-5);
\draw [dashed] (3,-5) -- (3,5); 
\draw [ dashed]  (-1,-5) -- (-1,5) ; 
\node [draw,circle,color=black, fill=white,inner sep=0pt,minimum size=3pt, opacity = 1] (A) at (0.3,0) {};
\node [draw,circle,color=black, fill=black,inner sep=0pt,minimum size=3pt, opacity = 1] (B) at (1,0.7) {};
\node [draw,circle,color=black, fill=white,inner sep=0pt,minimum size=3pt, opacity = 1] (C) at (1.7,0) {};
\node [draw,circle,color=black, fill=black,inner sep=0pt,minimum size=3pt, opacity = 1] (D) at (1,-0.7) {};
\node [draw,circle,color=black, fill=white,inner sep=0pt,minimum size=3pt, opacity = 1] (E) at (1,2) {};
\node [draw,circle,color=black, fill=black,inner sep=0pt,minimum size=3pt, opacity = 1] (F) at (1,-2) {};
\node () at (0,1) {\footnotesize$\sqrt{2}$};
\node () at (0,-1) {\footnotesize$1/\sqrt{2}$};
\node () at (2,1) {\footnotesize$1/\sqrt{2}$};
\node () at (2,-1) {\footnotesize$\sqrt{2}$};
\node () at (1,2.5) {\footnotesize$$};
\node () at (1,-2.5) {\footnotesize$$};
\draw [] (A) -- (B) -- (C) -- (D) -- (A);
\draw (A) -- +(-1.3,0);
\draw (B) -- (E);
\draw (E) -- +(2,0);
\draw (E) -- +(-2,0);
\draw (C) -- +(1.3,0);
\draw (D) -- (F);
\draw (F) -- +(2,0);
\draw (F) -- +(-2,0);
\draw (-1,4) -- (3,4);
\draw (-1,-4) -- (3,-4);

\end{tikzpicture}

};

\node () at (8.5,0) {
\begin{tikzpicture}[xscale = 1, yscale = 2]
\node () at (0.5,1.25){ \footnotesize $t_4$};
\node () at (0.5,1.75){ \footnotesize $t_4$};
\node () at (0.5,1.25){ \footnotesize $$};
\node () at (0.5,1.75){ \footnotesize $$};

\draw [dashed] (0,2.75) -- (1,2.75);
\draw [dashed] (1,0.25) -- (1,2.75) ; 
\draw [ dashed] (0,0.25) -- (1,0.25) ;
\draw [ dashed]  (0,0.25) -- (0,2.75) ; 
\draw (0,2.5) -- (1,2.5);
\draw (0,2)  -- (1,2);
\draw (0,1)  -- (1,1);
\draw (0,1.5)  -- (1,1.5);
\draw (0,0.5)  -- (1,0.5);
\end{tikzpicture}
};

\end{tikzpicture}
\caption{Identifying face weights with Fock-Goncharov coordinates. All omitted face weights are equal to $1$.}\label{fig:fgexample}
\end{figure}
Consider the double word $\word = (-1,2)$ in type $B_2$. The Fock-Goncharov parametrization $(\C^*)^4 \to \SO_5$ of the corresponding double Bruhat cell in $\SO_5$ is given by
$$
(t_1, t_2, t_3, t_4) \mapsto Y_1(t_1)X_{-1}Y_1(t_2)Y_2(t_3)X_2Y_2(t_4).
$$
The exact location of the $Y_i$ symbols is irrelevant as long as $X_{-1}$ is between the two instances of $Y_1$, and $X_2$ is between the two instances of $Y_2$. The corresponding plabic graph is obtained by concatenating graphs in Figure \ref{fig:fgexample}, in the order as shown. We see that the resulting graph has exactly four symmetric pairs of faces, not counting the uppermost, lowermost, and the face on the midline. Furthermore, the symmetrized face weights are precisely Fock-Goncharov parameters $t_1, t_2, t_3, t_4$.
\end{example}

\medskip

\section{Total positivity in types $B$ and $C$}\label{sec:tp}

\paragraph{The totally nonnegative part of $G(\Omega_n)$.} Let $G$ be a complex  reductive Lie group with Lie algebra $\g$ of rank $k$, $T \subset G$ be a maximal torus, and $E_i, F_i \in \g$, $i \in [1,k]$ be the corresponding Chevalley generators. Set $T^{>0} := \{ H \in T \mid \chi(H) \in \R_+ \,\forall\, \chi \in \mathrm{Hom}(T, \C^*)\}.$
The \emph{totally nonnegative part $G^{\geq 0}$} of $G$ \cite{Lus, fomin1999double} is defined as the submonoid generated by $T^{>0}$ and $X_i(t)$, $i \in [-k,-1] \cup [1,k]$, $t \in \R_+$, where, as above, $X_i(t) := \exp(t E_i), X_{-i}(t) := \exp(t F_i)$. The submonoid $G^{\geq 0}$ depends on the choice of a root decomposition and Chevalley generators, but different choices lead to submonoids which are conjugate in~$G$.

Now, recall that the group  $G(\Omega_n)$ consists of linear operators $A \in GL_n$ preserving an antidiagonal bilinear form $\Omega_n = \sum_{i=1}^n (-1)^{i+1}E_{i, n+1-i}$. It is the symplectic group $\Sp_n$ if $n$ is even and orthogonal group $O_n$ if $n$ is odd. Take the Chevalley basis in the corresponding Lie algebra given by \eqref{cb1} if $n$ is even or \eqref{cb2} if $n$ is odd. The aim of this section is to show that the corresponding submonoid $G(\Omega_n)^{\geq 0}$ is precisely the intersection of $G(\Omega_n)$ with totally nonnegative matrices. One inclusion is straightforward:  the matrices generating $G(\Omega_n)^{\geq 0}$ are totally nonnegative, so every matrix in $G(\Omega_n)^{\geq 0}$ is totally nonnegative. Our goal is to prove the opposite inclusion: \begin{proposition}\label{prop:tp} If $A \in G(\Omega_n)$ is totally nonnegative as an element of $GL_n$, then  $A \in G(\Omega_n)^{\geq 0}$.\end{proposition}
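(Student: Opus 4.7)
The plan is to use the plabic-graph parametrization of double Bruhat cells from Proposition \ref{dbc} to extract a positive factorization of $A$ in terms of $T_{>0}$ and the $X_{\pm i}(t)$'s, thereby placing $A$ in Lusztig's submonoid. Given $A \in G(\Omega_n) \cap GL_n^{tnn}$, let $G^{u,v}$ be the unique double Bruhat cell of $G(\Omega_n)$ containing $A$, and choose any reduced double word $\word = i_1 \dots i_m$ representing $(u,v)$. Proposition \ref{dbc} provides a birational parametrization $\phi_\word = \mes \circ \psi_\word \colon T \times (\C^*)^m \dashrightarrow G^{u,v}$, so there exist unique $H \in T$ and $t_1, \dots, t_m \in \C^*$ with $A = H X_{i_1}(t_1) \cdots X_{i_m}(t_m)$. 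Thus the proposition reduces to showing that $H \in T_{>0}$ and every $t_j \in \R_+$.

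Because $\psi_\word$ is a monomial map and its explicit description on each building block $\Gamma_{\pm i}$ uses only positive exponents, $\psi_\word$ restricts to a bijection between $T_{>0} \times \R_+^m$ and the positive locus of $\F^{ms}(\Gamma_\word)$. It is therefore enough to show that the unique weighting $\Y \in \F^{ms}(\Gamma_\word)$ with $\mes(\Y) = A$ has all face weights positive. For this, fix a perfect orientation of $\Gamma_\word$. By the Lindström-Gessel-Viennot lemma as adapted to planar networks by Postnikov, each minor of $A = \mes(\Y)$ equals a subtraction-free weighted count of families of vertex-disjoint directed paths, and hence a subtraction-free Laurent polynomial in the face weights of $\Y$. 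Conversely, standard Postnikov-Talaska-type inversion expresses each face weight as a ratio of two such minors of $A$, with the denominator a minor nonvanishing on $G^{u,v}$. Since $A$ is totally nonnegative all its minors are $\geq 0$, while $\Y$ lying in the algebraic torus $\F^{ms}(\Gamma_\word)$ forces every face weight to be nonzero; combining these, every face weight of $\Y$ is strictly positive.

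The main obstacle is the inversion claim above: one must produce subtraction-free formulas for the face weights of the move-symmetric graph $\Gamma_\word$ as ratios of minors of the ambient $GL_n$ matrix $A$, together with a proof that the denominators do not vanish on $G^{u,v}$. This can be done by transporting the problem back to the ordinary (non-symmetric) $GL_n$ setting, where Postnikov's and Talaska's formulas are available, using the reflection and recoloring compatibilities of Propositions \ref{prop:reflect} and \ref{prop:recolor} to control how face weights transform; the nonvanishing of denominators then reduces to the standard fact that each denominator can be identified with a Fomin-Zelevinsky generalized minor that is known to be nonzero on the relevant double Bruhat stratum. Once positivity of the face weights is established, $\psi_\word^{-1}$ delivers $H \in T_{>0}$ and $t_j \in \R_+$, whence $A = H X_{i_1}(t_1) \cdots X_{i_m}(t_m) \in G_{\geq 0}(\Omega_n)$ by the very definition of Lusztig's totally nonnegative part.
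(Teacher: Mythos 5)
There is a genuine gap, and it sits exactly where you flag "the main obstacle." Your argument begins by asserting that, because $\phi_\word=\mes\circ\psi_\word$ is \emph{birational} onto $G^{u,v}$, the given $A$ admits a factorization $A=HX_{i_1}(t_1)\cdots X_{i_m}(t_m)$. Birationality only gives this on a Zariski-dense open subset of $G^{u,v}$; the factorization map genuinely fails to be surjective onto the whole cell (already for $SL_2$ the element $\bigl(\begin{smallmatrix}0&1\\-1&0\end{smallmatrix}\bigr)\in G^{s_1,s_1}$ is not of the form $H x_{1}(t)x_{-1}(t')$). For the same reason your claim that the denominators in the inversion formulas are "minors nonvanishing on $G^{u,v}$" is false in general: the initial-seed generalized minors vanish on a proper closed subset of the cell, and they are only guaranteed to be nonzero (indeed positive) on the \emph{totally nonnegative part} of the cell. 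So to place $A$ in the domain of $\psi_\word^{-1}$ you already need the Fomin--Zelevinsky/Lusztig statement that every totally nonnegative element of a double Bruhat cell admits a positive factorization --- which, applied to $G(\Omega_n)_{\geq 0}$, presupposes $A\in G_{\geq 0}(\Omega_n)$ and makes the argument circular, while applied to $GL_n$ it requires knowing that a reduced double word for $(u,v)$ in the $B/C$ Weyl group unfolds to a reduced double word in $S_n\times S_n$ and that the resulting $GL_n$ factorization respects the folding. Neither point is addressed, and the purported "Postnikov--Talaska-type inversion" for move-symmetric graphs is nowhere established.

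For comparison, the paper takes a more elementary route that avoids all of this: it first performs a Gaussian decomposition $A=LDU$ with totally nonnegative factors, uses uniqueness and $\tau$-invariance to reduce to the upper-triangular case $A\in GL_n^{id,v}$ with $v\in C_{S_n}(w_0)$, and invokes Corollary \ref{cor:red} to see that a reduced factorization of $v$ by the generators \eqref{eq:sgen} is also reduced in $S_n$. Then Fomin--Zelevinsky's Theorem~1.3 for $GL_n$ (which applies to \emph{every} totally nonnegative element of the cell, not just a dense subset) yields a unique factorization $A=H\prod A_j$ with positive parameters, and the key step is to apply $\tau$ factor by factor and use uniqueness to force the parameters to satisfy the folding relations ($t=t'$ in case \eqref{case1}, $t'=2t,\ t''=t$ in case \eqref{case3}), so that each $A_j$ becomes $X_i(t)$ for the type $B/C$ Chevalley generators. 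That uniqueness-plus-involution trick is the ingredient your proposal is missing; if you want to salvage the network route, you would need to carry out the $GL_n$ factorization for the unfolded word first and only then transport it to face weights of $\Gamma_\word$.
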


A similar result appears in \cite{barkley2024two}, where it is shown that, for certain flag varieties in types $B$ and $C$, Lusztig's positivity coincides with positivity of Pl\"ucker coordinates. 

\begin{remark}
For any complex reductive Lie group $G$ one can also define the \emph{totally positive part} $G^{> 0}$, see~\cite{Lus}. It can be characterized as the intersection of $G^{\geq 0}$ with the open double Bruhat cell $G^{w_0,w_0}$, where $w_0$ is the longest element in the Weyl group of $G$ \cite[Proposition 2.13]{Lus}. In particular, $GL_{n}^{ >0} = GL_{n}^{\geq 0} \cap GL_n^{w_0, w_0}$.  At the same time, for $G = G(\Omega_n)$, one has $G^{w_0, w_0} = GL_n^{w_0, w_0} \cap G$, so for such $G$ we have
\begin{gather*}
G^{> 0} = G^{\geq 0} \cap G^{w_0,w_0} = G^{\geq 0} \cap (GL_n^{w_0, w_0} \cap G)\stackrel{(*)}{=} (GL_{n}^{\geq 0} \cap G ) \cap (GL_n^{w_0, w_0} \cap G) \\ = (GL_{n}^{\geq 0} \cap GL_n^{w_0, w_0}) \cap G = GL_{n}^{ >0}  \cap G,
\end{gather*}
where $(*)$ holds by Proposition \ref{prop:tp}. In other words, $G(\Omega_n)^{> 0}$ is precisely the set of totally positive matrices in $G(\Omega_n)$.
\end{remark}

\paragraph{The Weyl group of $G(\Omega_n)$. } We begin with discussing the structure of the Weyl group of $G(\Omega_n)$. It is embedded in the Weyl group $S_n$ of $GL_n$
as the centralizer $C_{S_{n}}(w_0)$ of the order-reversing permutation $w_0 \in S_{n}$. That centralizer is generated by involutions $s_i$, $i \in [1,k]$, $k = \lfloor n/2 \rfloor$, given by
\begin{gather}\label{eq:sgen}
\begin{gathered}
s_i := (i, i+1)(n - i, n - i + 1), \quad i \in [1,k-1],\\
s_k := \begin{cases}
 (k, k+1) \quad \mbox{if } n =2k,\\
   (k, k+2) \quad \mbox{if } n =2k + 1.
\end{cases}
\end{gathered}
\end{gather}
For any $w \in C_{S_{n}}(w_0)$, let $\mathrm{inv}(w)$ be the number of inversions, i.e., pairs $ i,j \in [1,n]$ such that $i < j$ but $w(i) > w(j)$, and let $\mathrm{neg}(w)$ be the number of $ i \in [1,n]$ such that $  i < \frac{1}{2}(n+1)$ and $w(i) > \frac{1}{2}(n+1) $ (one can identify the Weyl group $ C_{S_{n}}(w_0)$ of type $B=C$ with signed permutations; upon that identification, $\mathrm{neg}(w)$ becomes the number of $i > 0$ such that $w(i) < 0$, hence the notation). 

The following result is well-known (see e.g. \cite[Proposition 2.3]{brenti2017odd}), although we were not able to find the form we need in the existing literature. 
\begin{proposition}\label{prop:length}
 For any $w \in C_{S_{n}}(w_0)$ any reduced (i.e. minimal length) factorization of $w$ in terms of the generators $s_i$ consists of
$
\frac{1}{2}(\mathrm{inv}(w) +(-1)^n \mathrm{neg}(w))
$
factors, precisely $\mathrm{neg}(w)$ of which are $s_k$.
\end{proposition}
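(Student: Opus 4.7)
The plan is to prove the two assertions simultaneously by analyzing how each generator $s_i$ affects the statistics $\mathrm{inv}$ and $\mathrm{neg}$, then combining a descent-based reduction argument with a standard Coxeter-theoretic invariance result. I would begin by computing the change in each statistic under right multiplication by each $s_i$; the centralizer relation $w(n+1-j)=n+1-w(j)$ is the key constraint that makes these changes nicely coordinated.

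For $i<k$, the generator $s_i=(i,i+1)(n-i,n-i+1)$ is a product of two disjoint adjacent transpositions, and the symmetry relation implies that $w(n-i)<w(n-i+1)$ if and only if $w(i)<w(i+1)$, so both transpositions add or both remove an inversion; this gives $\mathrm{inv}(ws_i)-\mathrm{inv}(w)=\pm 2$, while $\mathrm{neg}$ is unaffected since $s_i$ permutes positions strictly within $\{1,\dots,k\}$ and strictly within $\{n-k+1,\dots,n\}$. For $s_k$ with $n=2k$, right multiplication by the adjacent transposition $(k,k+1)$ changes $\mathrm{inv}$ by $\pm 1$, and the identity $w(k+1)=n+1-w(k)$ forces the same sign change in $\mathrm{neg}$. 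For $s_k=(k,k+2)$ with $n=2k+1$, the centralizer relation forces the fixed point $w(k+1)=k+1$, and a direct check of the three affected pairs $(k,k+1)$, $(k+1,k+2)$, $(k,k+2)$ shows they all flip together, giving an $\mathrm{inv}$-change of $\pm 3$ and an $\mathrm{neg}$-change of $\pm 1$ with matching signs. In every case, the quantity $N(w):=\tfrac{1}{2}(\mathrm{inv}(w)+(-1)^n\mathrm{neg}(w))$ changes by exactly $\pm 1$.

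Since $N(e)=0$, this immediately gives $\ell(w)\ge N(w)$. For the reverse inequality I would use a descent-based reduction: if $w\ne e$, choose a position $j$ with $w(j)>w(j+1)$, and case-analyze on its location. A descent at $j<k$ (or at its symmetric counterpart $j>n-k$) is removed by $s_j$ (respectively $s_{n-j}$); a descent at a central position ($j=k$ for $n=2k$, or $j\in\{k,k+1\}$ for $n=2k+1$) translates, via the centralizer relation, into $w(k)\ge k+1$ or $w(k)\ge k+2$ respectively, which is precisely the condition for $s_k$ to reduce $N$. Iterating produces a reduced factorization of length $N(w)$, and since every application of $s_k$ in this procedure decreases $\mathrm{neg}$ by exactly one while applications of $s_i$ with $i<k$ leave $\mathrm{neg}$ unchanged, that factorization contains exactly $\mathrm{neg}(w)$ instances of $s_k$.

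To extend the $s_k$-count to every reduced factorization, I would invoke Matsumoto's theorem: any two reduced words for $w$ are connected by a sequence of braid moves, and inspection of the braid relations of type $B_k=C_k$ shows that all of them preserve the count of $s_k$. Commutation and the $3$-braid $s_is_{i+1}s_i=s_{i+1}s_is_{i+1}$ (which only occurs for $i<k-1$, hence does not involve $s_k$) do so trivially, while the unique $4$-braid $s_{k-1}s_ks_{k-1}s_k=s_ks_{k-1}s_ks_{k-1}$ has two $s_k$'s on each side. The main technical obstacle is the careful sign bookkeeping in the case analysis for the non-adjacent transposition $s_k=(k,k+2)$ in type $B$; the remaining ingredients are standard.
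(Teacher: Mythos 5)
Your proposal is correct, but it follows a genuinely different route from the paper. The paper works with \emph{subadditivity}: for any factorization $w=\prod s_{i_j}$ it bounds $\mathrm{inv}(w)\le\sum\mathrm{inv}(s_{i_j})$ and $\mathrm{neg}(w)\le\sum\mathrm{neg}(s_{i_j})$, which yields the lower bound on the length and the lower bound $\mathrm{neg}(w)$ on the number of $s_k$'s in one stroke; the matching upper bound comes from an axially symmetric wiring diagram cut into elementary crossings, and the $s_k$-count then follows because tightness of the combined inequality forces tightness of the $\mathrm{neg}$ inequality. The odd case is reduced to the even case by deleting the fixed point $k+1$. You instead compute the exact increments of $\mathrm{inv}$ and $\mathrm{neg}$ under each generator (your sign bookkeeping for $i<k$, for $(k,k+1)$, and for $(k,k+2)$ is all correct, and the centralizer relation $w(n+1-j)=n+1-w(j)$ does force the coordinated signs you claim), deduce that $N(w)$ changes by exactly $\pm1$, and close the gap with a descent-based reduction; this treats the even and odd cases uniformly and replaces the geometric construction with an exchange-type algorithm. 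One remark: the appeal to Matsumoto's theorem (which also requires knowing that $(C_{S_n}(w_0),\{s_i\})$ is a Coxeter system of type $B_k$) is avoidable. Since a reduced factorization has length $N(w)$ and each generator changes $N$ by $\pm1$, every step must be a $+1$ step; your increment computation shows a $+1$ step by $s_k$ increases $\mathrm{neg}$ by exactly $1$ (in both parities of $n$) while a step by $s_i$, $i<k$, leaves $\mathrm{neg}$ fixed, so \emph{every} reduced factorization automatically contains exactly $\mathrm{neg}(w)$ copies of $s_k$ — no braid-move invariance is needed. What your approach buys is a self-contained, uniform argument; what the paper's buys is that the $s_k$-count falls out of the inequality chain for free and the existence of a minimal factorization is made visually explicit.
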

\begin{proof}
\begin{figure}
\centering
\begin{tikzpicture}[yscale = 0.4, xscale = 0.6]
\centering
\node  (1) at (0,1.5) {$1$};
\node  (2) at (0,0.5) {$2$};
\node  (3) at (0,-0.5) {$3$};
\node  (4) at (0,-1.5) {$4$};
\node  (1') at (4,-0.5) {$1$};
\node  (2') at (4,-1.5) {$2$};
\node  (3') at (4,1.5) {$3$};
\node  (4') at (4,0.5) {$4$};
\draw (2) -- (2');
\draw (1) -- (1');
\draw (3) -- (3');
\draw (4) -- (4');
\node () at (10,0) {$(13)(24) = (23)\cdot (12)(34) \cdot (23)$};
\draw [dotted] (1.5,-2) -- (1.5,2);
\draw [dotted] (2.5,-2) -- (2.5,2);
\end{tikzpicture}
\caption{Factorizing an element in the centralizer of $w_0$.}\label{fig:fact}
\end{figure}


First, assume that $n$ is even, $ n =2k$. Let $w = \prod s_{i_j}$ be a factorization of  $w \in C_{S_{2k}}(w_0)$. Let also $p$ be the number of factors $s_k$, and $q$ be the number of other factors. Since both functions $\mathrm{inv}$ and $\mathrm{neg}$ satisfy $f(w_1w_2) \leq f(w_1) + f(w_2)$, we have
$$
\mathrm{inv}(w) \leq \sum \mathrm{inv}(s_{i_j}) = p + 2q,
$$
and
\begin{equation}\label{ineq2}
\mathrm{neg}(w) \leq \sum \mathrm{neg}(s_{i_j}) = p,
\end{equation}
so
\begin{equation}\label{ineq3}
\mathrm{inv}(w) + \mathrm{neg}(w) \leq 2(p+q).
\end{equation}
This means that any factorization  of $w$ in terms of the generators $s_i$ consists of at least
$
\frac{1}{2}(\mathrm{inv}(w) + \mathrm{neg}(w))
$
factors, and at least $\mathrm{neg}(w)$ of them are $s_k$. Furthermore, if \eqref{ineq3} is an equality, then so is \eqref{ineq2}. Therefore, to complete the proof, it suffices to find a factorization of $w$ of length exactly $
\frac{1}{2}(\mathrm{inv}(w) + \mathrm{neg}(w))$. Such a factorization can be constructed using the following standard technique. For each $i \in [1, 2k]$, connect the points $(0,i)$ and $(1, w(i))$ by a straight line interval. This gives an axially symmetric diagram as shown in Figure~\ref{fig:fact}. If needed, perturb the diagram (preserving the axial symmetry) by shifting the endpoints of intervals in the vertical direction, so that all intersections are pairwise, and there are at most two intersection points with the same horizontal coordinate. As a result, one gets a diagram which can be cut into axially symmetric subdiagrams corresponding to generators $s_i$, giving rise to a factorization $w = \prod s_{i_j}$. As before, let $p$ be the number of factors $s_k$, and $q$ be the number of other factors. Note that the diagram representing $s_k$ has one intersection, while diagrams representing other generators have two intersections. So, the total number of intersections in the diagram representing $w$ is $p + 2q$. On the other hand, an intersection happens each time we have an inversion, so
$$
 p + 2q = \mathrm{inv}(w).
$$
Further, $\mathrm{neg}(w)$ is equal to the number of lines in the diagram which start above the axis of symmetry and end below. These are in bijection with subdiagrams representing the generator $s_k$, so
$$
p = \mathrm{neg}(w).
$$
Thus, the length of the so-constructed factorization of $w$ is
$$
 p + q = \tfrac{1}{2}((p+2q)+ q) = \tfrac{1}{2}(\mathrm{inv}(w) + \mathrm{neg}(w)),
$$
ending the proof in the even case.

Now, if $n$ is odd, $n = 2k+1$, then any $w \in C_{S_{2k+1}}(w_0)$ fixes the value $k+1$, so relabeling $k+2 \mapsto k + 1, \dots, 2k+1 \mapsto 2k$ we get an isomorphism $C_{S_{2k+1}}(w_0) \to C_{S_{2k}}(w_0)$. This isomorphism sends generators to generators, preserves the number $\mathrm{neg}(w)$, and decreases the number of inversions by $2\mathrm{neg}(w)$. Thus, the odd case reduces to the even case.
\end{proof}
\begin{corollary}\label{cor:red}
Let  $w \in C_{S_{n}}(w_0)$. Then, for any reduced factorization $w = \prod s_{i_j}$, we have $\mathrm{inv}(w) = \sum \mathrm{inv}(s_{i_j})$.
\end{corollary}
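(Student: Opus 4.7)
The plan is to derive the corollary directly from Proposition \ref{prop:length} by a short counting argument. First I would record the inversion counts of the generators themselves. For $i \in [1,k-1]$ the generator $s_i = (i,i+1)(n-i,n-i+1)$ is a product of two disjoint adjacent transpositions, so $\mathrm{inv}(s_i) = 2$. For $i = k$ the count depends on the parity of $n$: when $n = 2k$ we have $s_k = (k,k+1)$ with $\mathrm{inv}(s_k) = 1$, while when $n = 2k+1$ we have $s_k = (k,k+2)$, and a direct enumeration of the inverted pairs $(k,k+1)$, $(k,k+2)$, $(k+1,k+2)$ gives $\mathrm{inv}(s_k) = 3$.

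Now I would feed these into Proposition \ref{prop:length}. Given a reduced factorization $w = \prod_{j=1}^\ell s_{i_j}$, the proposition supplies the total length
\[
\ell = \tfrac{1}{2}\bigl(\mathrm{inv}(w) + (-1)^n \mathrm{neg}(w)\bigr)
\]
and asserts that exactly $p := \mathrm{neg}(w)$ of the factors equal $s_k$. The remaining $\ell - p$ factors each contribute $2$ to $\sum_j \mathrm{inv}(s_{i_j})$, and the $p$ copies of $s_k$ contribute $\mathrm{inv}(s_k)$ each. Plugging in: in the even case, $\sum_j \mathrm{inv}(s_{i_j}) = 2(\ell - p) + p = 2\ell - p = \mathrm{inv}(w) + \mathrm{neg}(w) - \mathrm{neg}(w) = \mathrm{inv}(w)$; in the odd case, $\sum_j \mathrm{inv}(s_{i_j}) = 2(\ell - p) + 3p = 2\ell + p = \mathrm{inv}(w) - \mathrm{neg}(w) + \mathrm{neg}(w) = \mathrm{inv}(w)$.

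Since Proposition \ref{prop:length} already provides both the length of a reduced factorization and the exact number of occurrences of $s_k$, there is no real obstacle here and the corollary is essentially arithmetic bookkeeping. The only mildly subtle point is to remember that in the odd case $s_k = (k,k+2)$ is \emph{not} an adjacent transposition and contributes $\mathrm{inv}(s_k) = 3$ rather than $1$; getting this value right is what makes the two parity cases collapse to the same identity.
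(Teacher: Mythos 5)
Your proof is correct and is essentially identical to the paper's: both use Proposition \ref{prop:length} to count the factors equal to $s_k$ (namely $\mathrm{neg}(w)$ of them) and the remaining factors, then sum the generator inversion counts $\mathrm{inv}(s_i)=2$ for $i<k$ and $\mathrm{inv}(s_k)=2-(-1)^n$, which is exactly your $1$ (even case) and $3$ (odd case). The only cosmetic difference is that the paper handles both parities in one formula while you split into two cases.
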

\begin{proof}
By the previous result, the number of  factors $s_k$ is $\mathrm{neg}(w)$, while the number of other factors is $\frac{1}{2}(\mathrm{inv}(w) +(-1)^n \mathrm{neg}(w)) - \mathrm{neg}(w) $. Furthermore, we have $\mathrm{inv}(s_{i}) = 2$ if $i \neq k$ and $\mathrm{inv}(s_{k}) = 2 - (-1)^n$, so
\begin{gather*}
\sum \mathrm{inv}(s_{i_j}) = ( 2 - (-1)^n) \mathrm{neg}(w) + 2(\tfrac{1}{2}(\mathrm{inv}(w) +(-1)^n \mathrm{neg}(w)) - \mathrm{neg}(w) ) = \mathrm{inv}(w). \\[-1.2\normalbaselineskip]\mathstrut \qedhere
\end{gather*}
\end{proof}

This result means that any reduced factorization of $w$ in the $B=C$ type Weyl group $C_{S_n}(w_0)$ can also be viewed as a reduced factorization in the $A$ type Weyl group $S_n$, cf. \cite[Proposition 3.9]{barkley2024two}. To obtain the latter from the former, replace each $s_i$ with its reduced expression in terms  of transpositions $(i,i+1)$. Such an expression is unique (up to
permuting commuting transpositions) unless $n = 2k+1$ is odd and $i = k$, in which case we have two factorizations $(k, k+2) = (k,k+1)(k+1, k+2)(k, k+1) =(k+1, k+2)(k, k+1)(k+1, k+2). $

\paragraph{Proof of Proposition \ref{prop:tp}.}
Suppose $A \in G(\Omega_n)$ is a totally nonnegative matrix.  We aim to prove that  $A \in G(\Omega_n)^{\geq 0}$. Since $A$ is totally nonnegative, by \cite[Proposition 2.29]{fomin1999double}, it admits a Gaussian decomposition $A = LDU$, where $L$ is lower unitriangular, $D$ is diagonal, $U$ is upper unitriangular, and all three matrices $L$, $D$, $U$ are totally nonnegative.
Recall that the involution $\tau \colon GL_n \to GL_n$ is defined by $\tau(X) := \Omega_n X^{-t} \Omega_n^{-1}$, and the fixed point set of $\tau$ is precisely $G(\Omega_n)$. So, since $A \in G(\Omega_n)$, we have $A = \tau(A) = \tau(L)\tau(D)\tau(U),$ 
and since $\tau$ preserves the subgroups of upper- and lower-triangular matrices, by uniqueness of Gaussian decomposition we must have $L, D, U \in G(\Omega_n)$. Therefore, it suffices to show that a totally nonnegative upper-triangular matrix  $A \in G(\Omega_n)$ belongs to  $G(\Omega_n)^{\geq 0}$ (the lower-triangular case is reduced to the upper-triangular one by considering the transposed matrix).

Suppose $A \in G(\Omega_n)$ is upper-triangular and totally nonnegative. Since $A$ is upper-triangular, it belongs to a double Bruhat cell of the form $GL_n^{{id}, v}$. Further, observe that since $\tau$ preserves the subgroups of upper- and lower-triangular matrices, for any $u,v \in S_n$, one has $\tau(GL_n^{u,v}) = GL_n^{\bar u, \bar v},$ where $\bar w := \lw w \lw$. Therefore, since $A \in GL_n^{{id}, v}$ is fixed by $\tau$, we have $ \lw v \lw = v$, i.e., $v \in C_{S_n}(w_0)$.  

Recall that, for any reductive Lie algebra with Chevalley basis $E_i, F_i$, we define $X_i(t) := \exp(t E_i)$. In what follows, we denote $X_i(t)$ in type $A$ by $X^A_i(t)$. We have $X^A_i(t) = \Id + tE_{i, i +1}$. Likewise, we denote $X_i(t)$ in types $B$ and $C$ by  $X^{BC}_i(t)$. Using \eqref{cb1} and \eqref{cb2}, we find
$$
{X^{BC}_i(t)} =  \begin{cases} \exp(t(E_{i, i+1} + E_{n-i, n -i + 1})) \mbox{ if $i < k$},\\
\exp(tE_{k, k+1}) \mbox{ if $n = 2k$ and $i = k$}, \\
\exp(\sqrt{2} t (E_{k, k+1} + E_{k+1, k+2}))\mbox{ if $n = 2k+1$ and $i = k$}.
\end{cases}
$$

 Take a reduced factorization $v = \prod_{j=1}^{m} s_{i_j},$ where $s_{i_j}$ are as in \eqref{eq:sgen}. By Corollary~\ref{cor:red}, replacing each $s_{i_j}$ by its reduced factorization in $S_n$, we also get a reduced factorization of $v$ in $S_n$. So, by \cite[Theorem 1.3]{fomin1999double}, the matrix $A$ can be written in a unique way in the form
$
A = H \prod_{j=1}^{m} A_j
$
where $H \in GL_n$ is diagonal with entries in $\R_+$, and
\begin{subnumcases}{A_j = } X^A_{i_j}(t)X^A_{n - i_j}(t') \mbox{, where $t, t' \in \R_+$, if $i_j < k$}, \label{case1}\\
X^A_k(t) \mbox{, where $t \in \R_+$, if $n=2k$ and $i_j = k$}, \label{case2}\\
X^A_k(t)X^A_{k+1}(t')X^A_k(t'') \mbox{,  where $t, t', t'' \in \R_+$, if $n = 2k+1$ and $i_j = k$.}\label{case3}
\end{subnumcases}
Further, note that $\tau(X^A_i(t)) = X^A_{n-i}(t)$. So, if $A_j$ is of the form \eqref{case1}, we have 
$$
\tau(A_j) = X^A_{n-i_j}(t)X^A_{i_j}(t') = X^A_{i_j}(t')X^A_{n-i_j}(t) .
$$
Likewise, if $A_j$ is of the form \eqref{case2}, we have 
$$
\tau(A_j) = X^A_k(t).
$$
Finally, if $A_j$ is of the form \eqref{case3}, then 
\begin{gather*}
\tau(A_j) = X^A_{k+1}(t)X^A_{k}(t')X^A_{k+1}(t'')   =  X^A_k(\tfrac{t't''}{t+t''})X^A_{k+1} ( t+t'')X^A_k(\tfrac{tt'}{t + t''} ).
\end{gather*}
Finally, note that $\tau(H)$ is diagonal with entries in $\R_+$. So, the factorization
$
A = \tau(A) =  \tau(H) \prod_{j=1}^{m} \tau(A_j)
$
is of the same form as $A = H \prod_{j=1}^{m} A_j$. Therefore, by uniqueness we have $\tau(H) = H$ and $\tau(A_j) = A_j$ for every~$j$. The former, along with entries of $H$ being in $\R_+$, implies $H \in T^{>0} \subset G(\Omega_n)^{\geq 0}$. So, it suffices to check that $A_j \in G(\Omega_n)^{\geq 0}$ for all $j$. Indeed, if $A_j$ is of the form  \eqref{case1}, then $\tau(A_j) = A_j$ implies $t' = t$, so
 $$
 A_j =  X^A_{i_j}(t)X^A_{n - i_j}(t)  = X^{BC}_{i_j}(t),
 $$
which belongs to $G(\Omega_n)^{\geq 0}$ by definition of the latter.
 Similarly, since in the $n = 2k$ case we have $X_k^A(t) = X_k^{BC}(t)$, if $A_j$ is of the form  \eqref{case2}, then it again belongs to $G(\Omega_n)^{\geq 0}$ by definition.
Finally, if $A_j$ is of the form  \eqref{case3}, then $\tau(A_j) = A_j$ implies $  t'= 2t = 2t''$, so
$$ A_j = X^A_k(t)X^A_{k+1}(t / 2)X^A_k(t) =  X_k^{BC}(\sqrt{2}t) \in G(\Omega_n)^{\geq 0}.$$
Thus, we have $H \in  G(\Omega_n)^{\geq 0}$ and $ A_j \in G(\Omega_n)^{\geq 0}$ for all $j$. So, $ A = H \prod_{j=1}^{m} A_j \in G(\Omega_n)^{\geq 0}$, as desired. \qed

\medskip

\bibliographystyle{plain}
\bibliography{bc.bib}

\end{document}